\begin{document}

\newtheorem{proposition}{Proposition}[section]
\newtheorem{definition}[proposition]{Definition}
\newtheorem{theorem}[proposition]{Theorem}
\newtheorem{corollaire}[proposition]{Corollary}
\newtheorem{lemma}[proposition]{Lemma}
\newtheorem{remark}[proposition]{Remark}

\begin{center}
\begin{spacing}{1.2}
{\fontsize{16}{20}
\textsc{Eigenvalues of Dirichlet Laplacian within the class of open sets with constant diameter}
}
\end{spacing}
\end{center}
\vspace{-0.738cm}
\begin{center}
{\fontsize{13}{20}
\textsc{Z. Fattah}, \textsc{M. Berrada} \\
\bigskip
}
{\fontsize{11}{20}
Mathematics and computer science department, ENSAM of Mekn\`es, University of Moulay Ismail, Morocco\\ z.fattah@edu.umi.ac.ma, m.berrada@ensam.umi.ac.ma}
\end{center}

\vspace{10pt}

{\fontsize{12}{20}
\textsc{Abstract}:
}
This paper is about a shape optimization problem related to the Dirichlet Laplacian eingevalues in the Euclidean plane. More precisely we study   the shape of the minimizer in the class of open sets of constant width. We prove that
 the disk is not a local minimizer except for a limited number of eigenvalues.

\vspace{20pt}

{\fontsize{12}{20}
Keywards: eigenvalue, convex bodies, Dirichlet, Laplacian, shape optimization, constant width, Bessel functions
}

%
\newtheorem{assumption}{Assumption}%
%


\section{Introduction}
\label{intro}
\medskip
The shape optimization of the eigenvalues of an elliptic operator is an old problem.
Lord Rayleigh  considered the Laplacian operator with Dirichlet boundary conditions in his book "Theory of sound" \cite{ray1896}.
He stated that
among sets of fixed measure,
the disk minimizes the first eigenvalue in the Euclidean plane.  The proof of this statement came later simultaneously and independently by G.Faber 1923 \cite{faber1923} and E.Krahn 1924 \cite{krahn1926}.
A natural question is the optimal shape for the other eigenvalues. The second eigenvalue was studied by E. Krahn \cite{krahn1926}, Szeg$\ddot{o}$ \cite{polya1955} and I. Hong \cite{hong1954} who proved that the minimum among sets of constant measure is the union of two identical balls.
 Next, S.A. Wolf and J.R. Keller \cite{wolf1994} proved that, for the third eigenvalue, the disk is a local minimum among sets of constant measure in the plane. The global minimizer remains nowadays an open problem even it is conjectured that the disk is the global minimizer and all numerical computations show that.
 Recently, A.Berger \cite{berger2015} proved that except the first and the third ones, no eigenvalue can be minimized by the disk.

Our paper is focused on the minimization of the eigenvalues of the Dirichlet Laplacian in $\mathbb{R}^{2}$ with a different constraint: we assume that our sets have constant diameter. Indeed we study the following shape optimization problem:
\begin{equation}\label{intro2}
  min\{\lambda_{\kappa}(\Omega),\; \Omega\subset \mathbb{R}^N \;\mbox{open set such that}\; D(\Omega)=\alpha\},
\end{equation}
where $(\lambda_{\kappa}(\Omega))_{k\in \mathbb{N}^{*}}$ denotes the eigenvalue of the Dirichlet-Laplacian, $D(\Omega)$ denotes the diameter of $\Omega$ and $\alpha\in (0,+\infty)$. We explicitly observe that our problem is equivalent to
\begin{equation}\label{intro1}
  min\{\lambda_{\kappa}(\Omega),\; \Omega\subset \mathbb{R}^N\;\mbox{open set of constant width}\;\alpha\},
\end{equation}
since an arbitrary set is contained in a set of constant width of the same diameter.

After proving the existence of a solution to problem (\ref{intro2}) for every $\kappa\in \mathbb{N}^{*}$,  we show that the ball is the solution for  problem (\ref{intro2}) when $\kappa=1$.
Then we study the local minimality of the disk for problem (\ref{intro2}),
in the spirit of \cite{berger2015} and \cite{wolf1994}. Our results are:
      \begin{enumerate}
        \item for $\kappa\in \{1, 3, 5, 8, 12, 17, 27, 34, 42\}$, the disk is a local minimizer (for smooth deformations) for problem (\ref{intro2}).
        \item for $\kappa\in \mathbb{N}^{*}\setminus \{1, 2, 3, 4, 5, 7, 8, 11, 12, 16, 17, 26, 27, 33, 34, 41, 49, 50\}$ the disk is not a local minimizer for problem (\ref{intro2}).
      \end{enumerate}

We were not able to answer to the question of the local minimality of the disk for the  cases $\kappa\in \{2,4,7,11,16,26,33,41,49;50\}$.

The paper is organized as follows. In section \ref{sec:main}, we recall some definitions and properties for Dirichlet-Laplacian eigenvalue and their continuity with respect to the $\gamma-$ convergence and the Hausdorff convergence. After that, we introduce also the notion of convex body with constant width. In section \ref{firstmainresult} we prove the existence of a solution to problem \ref{intro2} and we study the optimal domain for $\lambda_{1}$ and $\lambda_{3}$. In section \ref{sec:bodies}, we define a smooth deformation of the disk among open sets of constant width and we write the polar parametrization of this family from Gauss parametrization. Section \ref{sec:Dirichlet} is devoted to the computation of the asymptotic expansion of the eigenvalues, with respect to our  deformation of the disk. We distinguish two cases: simple eigenvalue and double eigenvalue. Finally, in section \ref{sec:Results}, we prove the main results of this paper by giving the eigenvalues locally minimized by the disk (Theorem \ref{Eigenvalues}) and the eigenvalue which are not minimized by the disk (Theorem \ref{X}).
\section{Preliminaries}
\label{sec:main}

\newcommand{\jnp}{j_{m,p}}
\newcommand{\jnpp}{j_{m,p}^{2}}
Let $\Omega$ be a bounded open set of $\mathbb{R}^{N}$ and let us denote by $0<\lambda_{1}(\Omega)\le \lambda_{2}(\Omega)\le \lambda_{3}(\Omega)\le \cdots$ the eigenvalues of the Laplacian with Dirichlet boundary conditions. The corresponding eigenfunctions $u_1,\; u_2,\; u_3,\dots$ satisfy (in a variational sense)

\begin{equation}\label{eq.0}
\left\{
  \begin{array}{ll}
    -\Delta u_{\kappa}=\lambda_{\kappa}u_{\kappa}, & \mbox{ in } \Omega \\
    u_{\kappa}=0, & \mbox{ on } \partial \Omega.
  \end{array}
\right.
\end{equation}

We recall that, by the classical $\min-\max$ formula of Courant and Fisher for eigenvalues,  the following monotonicity for the inclusion holds:
\begin{equation}\label{monot}
\Omega_1\subset \Omega_2 \Rightarrow \lambda_{\kappa}(\Omega_1)\geq \lambda_{\kappa}(\Omega_2).
\end{equation}

We  recall  two famous theorems which we are going to use in the sequel. For the proofs, see respectively \cite{henrot2006} and \cite{wolf1994}.

\begin{theorem}[Faber-Krahn]\label{Faber-Krahn}
\begin{equation}\label{Eq.FK}
\lambda_{1}(B)=\min\{\lambda_{1}(\Omega),\; \Omega\subset\mathbb{R}^{N} \mbox{ open, } |\Omega|=1\}
\end{equation}
where $B$ is the ball of volume 1.
\end{theorem}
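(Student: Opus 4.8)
The plan is to exploit the variational (Rayleigh quotient) characterization of the first eigenvalue together with Schwarz symmetrization. Recall that from the $\min$-$\max$ formula alluded to above, the first eigenvalue admits the representation
\begin{equation}\label{rayleigh}
\lambda_{1}(\Omega)=\min_{\substack{u\in H^{1}_{0}(\Omega)\\ u\neq 0}}\frac{\int_{\Omega}|\nabla u|^{2}\,dx}{\int_{\Omega}u^{2}\,dx},
\end{equation}
and the minimum is attained at the first eigenfunction $u_{1}$. The idea is to build from $u_{1}$ a competitor on the ball whose Rayleigh quotient is no larger, which by \eqref{rayleigh} will force $\lambda_{1}(B)\le\lambda_{1}(\Omega)$.

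First I would record that the first eigenfunction $u_{1}$ can be taken to be nonnegative on $\Omega$; this follows from the fact that $|u_{1}|$ has the same Rayleigh quotient as $u_{1}$ (since $|\nabla|u_{1}||=|\nabla u_{1}|$ a.e.) and hence is also a minimizer, so up to sign we may assume $u_{1}\ge 0$. Next I would introduce the Schwarz symmetrization $u_{1}^{*}$, the radially symmetric, radially decreasing rearrangement of $u_{1}$, which by construction is supported on the ball $\Omega^{*}$ having the same volume as $\Omega$ and lies in $H^{1}_{0}(\Omega^{*})$. The two pillars of the argument are then the equimeasurability of the rearrangement, which gives
\begin{equation}\label{equimes}
\int_{\Omega^{*}}(u_{1}^{*})^{2}\,dx=\int_{\Omega}u_{1}^{2}\,dx,
\end{equation}
and the Pólya--Szegő inequality, which asserts that symmetrization does not increase the Dirichlet energy:
\begin{equation}\label{polyaszego}
\int_{\Omega^{*}}|\nabla u_{1}^{*}|^{2}\,dx\le\int_{\Omega}|\nabla u_{1}|^{2}\,dx.
\end{equation}

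Combining \eqref{equimes} and \eqref{polyaszego}, the Rayleigh quotient of the admissible test function $u_{1}^{*}$ on $\Omega^{*}$ is bounded above by that of $u_{1}$ on $\Omega$, whence by \eqref{rayleigh} applied on $\Omega^{*}$,
\begin{equation}\label{chain}
\lambda_{1}(\Omega^{*})\le\frac{\int_{\Omega^{*}}|\nabla u_{1}^{*}|^{2}\,dx}{\int_{\Omega^{*}}(u_{1}^{*})^{2}\,dx}\le\frac{\int_{\Omega}|\nabla u_{1}|^{2}\,dx}{\int_{\Omega}u_{1}^{2}\,dx}=\lambda_{1}(\Omega).
\end{equation}
Since $|\Omega|=1$ forces $\Omega^{*}=B$, the ball of volume $1$, the chain \eqref{chain} yields $\lambda_{1}(B)\le\lambda_{1}(\Omega)$ for every admissible $\Omega$, and equality is realized by $\Omega=B$ itself; this is exactly \eqref{Eq.FK}. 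The genuinely hard part of this scheme is the Pólya--Szegő inequality \eqref{polyaszego}: proving it rigorously requires the coarea formula to slice the Dirichlet integral along level sets of $u_{1}$ and, crucially, the classical isoperimetric inequality to compare the perimeters of the level sets $\{u_{1}>t\}$ with those of the balls $\{u_{1}^{*}>t\}$ of equal volume. The isoperimetric inequality is therefore the deep geometric input underpinning the whole result, and for its proof together with the attendant rearrangement machinery I would refer to \cite{henrot2006}.
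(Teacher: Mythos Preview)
Your argument is the standard Schwarz symmetrization/P\'olya--Szeg\H{o} proof of the Faber--Krahn inequality, and it is correct as sketched. Note, however, that the paper does not actually prove this theorem: it is stated in the preliminaries as a classical result, with the proof deferred entirely to the reference \cite{henrot2006}. So there is no ``paper's own proof'' to compare against beyond that citation; your write-up is in fact a faithful outline of the very argument one finds in \cite{henrot2006}, and your concluding reference to that source is therefore well placed.
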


\begin{theorem}[Wolf-Keller]\label{Wolf-Keller}
$\lambda_{3}$ is locally minimized by the disk among the sets of constant measure.
\end{theorem}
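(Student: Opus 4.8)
\emph{Proof plan.} This is the classical theorem of Wolf and Keller, and I would follow their strategy: analyse volume-preserving perturbations of the disk and track the upper of the two eigenvalue branches emanating from the double eigenvalue. Write $B$ for the disk of radius $R$. Its spectrum begins with the simple eigenvalue $\lambda_1(B)=j_{0,1}^2/R^2$, followed by the double eigenvalue $\lambda_2(B)=\lambda_3(B)=j_{1,1}^2/R^2$, whose eigenspace is spanned by
$$u_1=J_1\!\left(\tfrac{j_{1,1}}{R}\,r\right)\cos\theta,\qquad u_2=J_1\!\left(\tfrac{j_{1,1}}{R}\,r\right)\sin\theta.$$
I would describe nearby domains $\Omega_\varepsilon$ by a normal boundary displacement $\rho(\theta)=R+\varepsilon h(\theta)+\varepsilon^2 k(\theta)+\cdots$, expand $h,k$ in Fourier series, and impose $|\Omega_\varepsilon|=|B|$ order by order; at first order this forces $h$ to have zero mean, and at second order it fixes the mean of $k$.

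The first step is the first-order analysis. Since $\lambda_3(B)$ is not simple, the branches are only directionally differentiable, and the directional derivative in the direction $h$ is the largest eigenvalue of the $2\times2$ matrix
$$M_{ij}=-\int_{\partial B}\partial_n u_i\,\partial_n u_j\,(V\cdot n)\,ds$$
furnished by the Hadamard formula, $V\cdot n$ being the normal velocity attached to $h$. On $\partial B$ one has $\partial_n u_i=\tfrac{j_{1,1}}{R}J_1'(j_{1,1})\,(\cos\theta\ \text{or}\ \sin\theta)$, so the entries of $M$ are integrals of $\cos^2\theta$, $\sin^2\theta$, $\cos\theta\sin\theta$ against $h$, and the double-angle identities show that only the constant and the second Fourier modes of $h$ enter. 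The trace of $M$ is proportional to the mean of $h$, hence vanishes under the area constraint, so the eigenvalues of $M$ are $\pm\mu$ with $\mu=\mu(h)\ge0$. Because $\lambda_3$ is the \emph{upper} branch, its one-sided derivative in the direction $\pm h$ equals $\mu(h)\ge0$; thus $\lambda_3$ increases to first order, strictly as soon as the second Fourier mode of $h$ is present.

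It remains to treat directions whose second Fourier mode vanishes, for which $M=0$ and the sign of the variation is settled at second order; this is the technical core. The cleanest route is to pull $\Omega_\varepsilon$ back to the fixed disk through an $\varepsilon$-dependent diffeomorphism, turning the problem into analytic perturbation theory for a family of operators on $B$. The second-order shift of $\lambda_3$ then has the Rayleigh--Schr\"odinger form involving the first-order eigenfunction correction $\dot u$, obtained by solving $(-\Delta-\lambda)\dot u=\dot\lambda\,u$ on $B$ with the perturbed boundary data and expanding $\dot u$ in the Dirichlet eigenbasis of the disk. Collecting terms yields a quadratic form in the Fourier coefficients of $h$; by orthogonality in $\theta$ it decouples into contributions indexed by the Fourier mode, each an explicit expression in the Bessel zeros $j_{m,p}$ and the values $J_m'(j_{1,1})$. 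The real obstacle is to prove that this form is nonnegative: this reduces to a family of inequalities among ratios of Bessel functions and their zeros, to be checked mode by mode. Establishing these Bessel inequalities---not the shape-calculus bookkeeping---is where the difficulty concentrates.
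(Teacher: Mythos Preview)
The paper does not give its own proof of this theorem: it is stated as a known result and the reader is referred to \cite{wolf1994} for the proof. Your proposal is a faithful outline of precisely that Wolf--Keller argument (first-order Hadamard analysis of the double eigenvalue via the $2\times2$ matrix, vanishing trace under the area constraint so the upper branch has nonnegative first variation, then a second-order expansion for the degenerate directions reducing to Bessel-function inequalities), so there is nothing to compare.
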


To prove existence of minimizers for eigenvalues, we obviously need continuity of eigenvalues with respect to the domain. Let us recall some definitions and theorems used in the sequel.

\begin{definition}[Hausdorff distance]

Let $K_{1}$ and $K_{2}$ be two non-empty compact sets in $\mathbb{R}^{N}$. We set
$$
  \displaystyle\forall x\in \mathbb{R}^{N}, d(x,K_{1}):=\inf_{y\in K_{1}}|y-x|
$$
$$
  \displaystyle \rho(K_{1}, K_{2}):=\sup_{x\in K_{1}}d(x,K_{2})
$$
Then the Hausdorff distance of $K_{1}$ and $K_{2}$ is defined by
\begin{equation}\label{H1}
  \displaystyle  d^{H}(K_{1},K_{2}):=\max(\rho(K_{1}, K_{2}),\rho(K_{2}, K_{1}))
\end{equation}
\end{definition}

For open sets, we define the Hausdorff distance through their complementary:

\begin{definition}
Let $\Omega_{1}$, $\Omega_{2}$ be two open subsets of a (large) compact set $B$. Then their Hausdorff distance is defined by:

\begin{equation}\label{H2}
  \displaystyle d_{H}(\Omega_{1},\Omega_{2}):= d^{H}(B\setminus\Omega_{1},B\setminus\Omega_{2})
\end{equation}
\end{definition}

\begin{definition}[$\gamma-$convergence]

Let $B$ be a ball ,\; $\Omega_n \subset B $ be a sequence of open sets and $\Omega \subset B$ be an open set. We say that $\Omega_n$ $\gamma-converges $ to $\Omega$ for every $f\in L^2(B)$ the solution $u^{f}_{\Omega_n}$ of the Dirichlet problem for the Laplacian on $\Omega_n$ with right-hand side $f$ converges (strongly) in $L^2(B)$ to $u^{f}_{\Omega}$.
\end{definition}

\begin{theorem}\label{continuity}
  Let $B$ be a fixed compact set in $\mathbb{R}^{N}$ and $\Omega_{n}$ be a sequence of convex open sets in $B$ which converges, for the Hausdorff metric, to a (convex) set $\Omega$. Then $\Omega_{n}$ $\gamma-$ converges to $\Omega$ and, in particular, for all $\kappa$ fixed, $\lambda_{\kappa}(\Omega_{n})\rightarrow \lambda_{\kappa}(\Omega)$.
\end{theorem}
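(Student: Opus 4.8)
The plan is to reduce the statement to the Mosco convergence of the Sobolev spaces $H^1_0(\Omega_n)$ towards $H^1_0(\Omega)$, all functions being extended by zero and viewed inside $H^1_0(B)$. This is the natural reduction because it is classical that Mosco convergence implies $\gamma$-convergence, and that $\gamma$-convergence — i.e. strong $L^2(B)$ convergence of the resolvents $(-\Delta_{\Omega_n})^{-1}\to(-\Delta_\Omega)^{-1}$ — together with the uniform compactness of the embedding $H^1_0(B)\hookrightarrow L^2(B)$, yields $\lambda_\kappa(\Omega_n)\to\lambda_\kappa(\Omega)$ for each fixed $\kappa$. Recall that Mosco convergence amounts to two conditions: (M1) for every $u\in H^1_0(\Omega)$ there exist $u_n\in H^1_0(\Omega_n)$ with $u_n\to u$ strongly in $H^1_0(B)$; and (M2) whenever $u_{n_k}\in H^1_0(\Omega_{n_k})$ converges weakly in $H^1_0(B)$ to some $u$, then $u\in H^1_0(\Omega)$. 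The whole difficulty lies in these two geometric conditions, and this is precisely where convexity is used in an essential way, since Hausdorff convergence alone is well known \emph{not} to imply $\gamma$-convergence for arbitrary open sets.

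For (M1) I would exploit the convexity of $\Omega$ through a homothety. Fix an interior point $x_0\in\Omega$ and, for $t\in(0,1)$, set $\Omega^t:=x_0+t(\Omega-x_0)$. By convexity $\overline{\Omega^t}$ is a compact subset of $\Omega$, and the rescaled functions $u^t(x):=u\big(x_0+t^{-1}(x-x_0)\big)$ satisfy $u^t\in H^1_0(\Omega^t)$ and $u^t\to u$ in $H^1_0(B)$ as $t\uparrow1$. Since $\overline{\Omega^t}\subset\Omega$ is compact, the Hausdorff convergence of the complements forces $\overline{\Omega^t}\subset\Omega_n$ for $n$ large: if $\delta:=\mathrm{dist}(\overline{\Omega^t},B\setminus\Omega)>0$, then for $n$ large every point of $B\setminus\Omega_n$ lies within $\delta/2$ of $B\setminus\Omega$, hence at distance $\ge\delta/2$ from $\overline{\Omega^t}$, so $\overline{\Omega^t}\cap(B\setminus\Omega_n)=\emptyset$. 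Thus $u^t\in H^1_0(\Omega_n)$ for $n$ large, and a diagonal extraction (letting $t\uparrow1$ as $n\to\infty$) produces the required recovery sequence $u_n$.

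For (M2), let $u_{n_k}\in H^1_0(\Omega_{n_k})$ with $u_{n_k}\rightharpoonup u$ in $H^1_0(B)$; by the compact embedding, $u_{n_k}\to u$ strongly in $L^2(B)$. For convex bodies the Hausdorff convergence of the complements is equivalent to the Hausdorff convergence $\overline{\Omega_n}\to\overline\Omega$ of the closures. Take any compact $C\subset B\setminus\overline\Omega$ and put $\delta:=\mathrm{dist}(C,\overline\Omega)>0$; then $\overline{\Omega_n}\subset(\overline\Omega)_{\delta/2}$ for $n$ large, whence $C\cap\overline{\Omega_n}=\emptyset$ and $u_{n_k}\equiv0$ on $C$. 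Passing to the $L^2$ limit gives $u=0$ a.e. on $C$, and exhausting $B\setminus\overline\Omega$ by such compacta yields $u=0$ a.e. on $B\setminus\overline\Omega$; since the boundary of a convex set has zero Lebesgue measure, $u=0$ a.e. on $B\setminus\Omega$. Finally, because convex domains have locally Lipschitz boundary, they are stable, so $H^1_0(\Omega)=\{v\in H^1_0(B):\ v=0\ \text{a.e. on } B\setminus\Omega\}$, and therefore $u\in H^1_0(\Omega)$.

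Once (M1) and (M2) hold, Mosco convergence is established, hence $\gamma$-convergence, and the eigenvalue convergence follows from the stability of the spectrum under resolvent convergence — equivalently, by passing to the limit in the $\min$–$\max$ formula, using the recovery sequence for the upper bound and the weak-limit condition for the lower bound. I expect the main obstacle to be condition (M1): producing test functions genuinely supported in the possibly smaller sets $\Omega_n$ while keeping strong $H^1_0$ convergence. This is exactly the point where convexity cannot be dispensed with, the inner homothety $\overline{\Omega^t}\subset\Omega$ being automatically captured by any Hausdorff-close $\Omega_n$; by contrast, condition (M2) is comparatively soft, relying only on the regularity (Lipschitz boundary, null boundary measure) that convex sets enjoy for free.
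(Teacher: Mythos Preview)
Your argument is correct and is precisely the standard route to this result: establish Mosco convergence of $H^1_0(\Omega_n)$ to $H^1_0(\Omega)$ via the homothety trick for (M1) and the stability of convex (hence Lipschitz) domains for (M2), then deduce $\gamma$-convergence and spectral continuity. Note, however, that the paper does not actually prove this theorem: its ``proof'' consists of a bare citation to Henrot's book \cite{henrot2006}, p.~31, where exactly this Mosco-convergence argument is carried out. So you have not produced a different proof but rather supplied the details that the paper chose to outsource to the literature; your write-up is essentially what one finds in the cited reference.
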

\begin{proof}
  \cite{henrot2006}, p:31.
\end{proof}

\begin{theorem}\label{Property}
  Let $\Omega_{n}$ and $\Omega$ be bounded open subsets of $\mathbb{R}^{N}$ such that $\Omega_{n}$ converges to $\Omega$ in sense of Hausdorff metric. If $K\subset \Omega$ is compact there exists $n_{K}\in \mathbb{N}$ such that $K\subset \Omega_{n}$ for all $n\geq n_{K}$.
\end{theorem}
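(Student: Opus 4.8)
The plan is to reduce the statement to an elementary distance estimate, exploiting that Hausdorff convergence of open sets is, by Definition \ref{H2}, the Hausdorff convergence of their complements inside a fixed large compact set $B$. First I would fix such a ball $B$ containing $\Omega$ together with all the $\Omega_n$, so that $K\subset\Omega\subset B$ and the quantities in the definition are meaningful. The crucial reformulation is that, since $K\subset B$, the inclusion $K\subset\Omega_n$ is equivalent to $K\cap(B\setminus\Omega_n)=\emptyset$, which is what I will establish for large $n$.

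The key quantity is $\delta:=d(K,B\setminus\Omega)=\inf\{|x-y|:x\in K,\ y\in B\setminus\Omega\}$. I would argue that $\delta>0$: the set $K$ is compact, the set $B\setminus\Omega$ is closed, and they are disjoint because $K\subset\Omega$; if a minimizing sequence drove the infimum to $0$, compactness of $K$ would produce a limit point lying in $K$ and, simultaneously, in the closed set $B\setminus\Omega$, contradicting disjointness. Hence $\delta>0$.

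Next I would use only one half of the Hausdorff convergence, namely that $\rho(B\setminus\Omega_n,B\setminus\Omega)=\sup_{z\in B\setminus\Omega_n}d(z,B\setminus\Omega)\to 0$. I choose $n_K$ so that $\rho(B\setminus\Omega_n,B\setminus\Omega)<\delta$ for all $n\ge n_K$, and then argue by contradiction: if some $x\in K$ were not in $\Omega_n$, then $x\in B\setminus\Omega_n$, so $d(x,B\setminus\Omega)\le\rho(B\setminus\Omega_n,B\setminus\Omega)<\delta$; but $x\in K$ forces $d(x,B\setminus\Omega)\ge\delta$, a contradiction. Therefore $K\cap(B\setminus\Omega_n)=\emptyset$, i.e. $K\subset\Omega_n$, for every $n\ge n_K$.

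There is no genuine obstacle here, as this is a soft topological fact; the only point requiring (minor) care is the strict positivity of $\delta$, which rests on the compactness of $K$ and the closedness of the complement. It is also worth emphasizing that only the inclusion $\rho(B\setminus\Omega_n,B\setminus\Omega)\to 0$ is needed, not the reverse direction, so one direction of the Hausdorff distance already suffices for the conclusion.
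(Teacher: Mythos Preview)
Your argument is correct and self-contained: the positivity of $\delta=d(K,B\setminus\Omega)$ follows exactly as you say from compactness of $K$ and closedness of $B\setminus\Omega$, and then the one-sided estimate $\rho(B\setminus\Omega_n,B\setminus\Omega)<\delta$ forces $K\cap(B\setminus\Omega_n)=\emptyset$, hence $K\subset\Omega_n$.

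As for comparison, the paper does not actually prove this statement at all; it simply cites \cite{henrot2005}, p.~32. So your proposal supplies a genuine proof where the paper gives only a reference, and the elementary distance argument you wrote is precisely the standard one found in that source. The only implicit assumption worth making explicit is that the Hausdorff convergence of open sets in Definition~\ref{H2} is taken relative to a fixed ambient compact $B$ containing all the $\Omega_n$ and $\Omega$; you already flagged this, and it is consistent with how the paper sets things up.
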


\begin{proof}
  \cite{henrot2005}, p:32.
\end{proof}

We are now going to define the width of a convex set.

\begin{definition}
 Let $C$  be a non empty closed and bounded convex set in $\mathbb{R}^{2}$. The support function $\sigma_{C}$  of $C$ is defined by:
$$
\begin{array}{rcl}
\sigma_{C}:\mathbb{R}^{2} & \rightarrow & \mathbb{R} \\
 n &\mapsto                             &  \sigma_{C}(n)=\displaystyle \max_{c\in C}<c,n>
\end{array}
$$
where $<,>$ denotes the scalar product.
\end{definition}
The support function can be equivalently defined on the unit sphere $\mathbb{S}^{1}$ by homogeneity:
$$
\displaystyle h_{C}:\varphi\in \mathbb{R}\mapsto h_{C}(\varphi)=\sigma_{C}(\cos (\varphi),\sin (\varphi))\,.
$$

The support function $\sigma_{C}(n)$ is the distance of the support line  $D_{\varphi}$ given by the equation $\cos (\varphi)x+\sin (\varphi)y=h_{C}(\varphi)$ from the origin, where $(x,y)$ are the coordinates of a point in the Euclidian plane. For more details about the support functions see \cite{rolf2013} and \cite{convex}.\\

The support function of a convex $C\subset\mathbb{R}^{2}$ is of class $C^{1}$ in $\mathbb{R}^{2}\setminus\{0\}$ if and only if $C$ is strictly convex. In this case, the boundary $\partial C$ can be described as follows:
\begin{equation}\label{first}
  \left\{
    \begin{array}{ll}
      x(\varphi)=h(\varphi)\cos(\varphi)-h^{'}(\varphi)\sin(\varphi) \\
      y(\varphi)=h(\varphi)\sin(\varphi)+h^{'}(\varphi)\cos(\varphi)
    \end{array}
  \right.
\end{equation}
where the prime denoted the differentiation.

If $h_{C}$ is of class $C^{1,1}$, $h_{C}^{''}$ exists almost everywhere by Rademacher's theorem. The quantity $\rho=h_C+h_C^{''}$ is the positive radius of curvature of the boundary of $C$.

A convex body is a nonempty compact convex subset of $\mathbb{R}^N$. The following lemma gives an important property of convex bodies.

\begin{lemma}\label{Lemme T.bayen}
Let $h$ be a function twice differentiable in $]-\infty,+\infty[$ of period $2\pi$. $h$ is a support function of a convex body on $\mathbb{R}^{2}$ if for all $\varphi \in [0,2\pi]$
$$
h(\varphi)+h^{''}(\varphi)>0.
$$
\end{lemma}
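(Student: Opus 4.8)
The plan is to exhibit an explicit convex body whose support function is $h$, taking the boundary parametrization (\ref{first}) as the candidate boundary. Set $\rho:=h+h''$, which by hypothesis is continuous, positive and $2\pi$-periodic. First I would define the curve $\gamma(\varphi)=(x(\varphi),y(\varphi))$ by the formulas (\ref{first}), and define the candidate body as the intersection of half-planes
$$
C:=\bigcap_{\psi\in[0,2\pi]}\bigl\{z\in\mathbb{R}^{2}\ :\ \langle z,n_{\psi}\rangle\le h(\psi)\bigr\},\qquad n_{\psi}:=(\cos\psi,\sin\psi).
$$
Being an intersection of closed half-planes, $C$ is automatically closed and convex; pairing the constraints coming from the opposite normals $n_{\psi}$ and $n_{\psi+\pi}=-n_{\psi}$ bounds the extent of $C$ in every direction, so $C$ is also bounded, hence compact. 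It then remains to check that $C$ is nonempty and that its support function is exactly $h$.

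The key computation is the differentiation of (\ref{first}). A direct calculation gives $x'=-\rho\sin\varphi$ and $y'=\rho\cos\varphi$, i.e. $\gamma'(\varphi)=\rho(\varphi)(-\sin\varphi,\cos\varphi)$; since $\rho>0$ the curve is regular and its outer unit normal is $n_{\varphi}$. Moreover, substituting (\ref{first}) one finds $\langle\gamma(\varphi),n_{\varphi}\rangle=h(\varphi)$, so $\gamma(\varphi)$ lies on the support line $D_{\varphi}$. To control the remaining directions, I would fix $\psi$ and set $G(\varphi):=\langle\gamma(\varphi),n_{\psi}\rangle$, so that
$$
G'(\varphi)=x'(\varphi)\cos\psi+y'(\varphi)\sin\psi=\rho(\varphi)\,\sin(\psi-\varphi).
$$
Because $\rho>0$, over one full period $G$ strictly increases on $(\psi-\pi,\psi)$ and strictly decreases on $(\psi,\psi+\pi)$, so $\varphi=\psi$ is its unique global maximum, with value $G(\psi)=h(\psi)$.

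This monotonicity is the heart of the argument: it shows that for all $\varphi,\psi$ one has $\langle\gamma(\varphi),n_{\psi}\rangle\le h(\psi)$, i.e. every boundary point $\gamma(\varphi)$ satisfies all the defining constraints, whence $\gamma([0,2\pi])\subset C$. In particular $C$ is nonempty, hence a convex body in the sense of the preceding definition. Finally, its support function $h_{C}(\psi)=\sup_{z\in C}\langle z,n_{\psi}\rangle$ satisfies $h_{C}(\psi)\le h(\psi)$ by the very definition of $C$, while $\gamma(\psi)\in C$ with $\langle\gamma(\psi),n_{\psi}\rangle=h(\psi)$ forces $h_{C}(\psi)\ge h(\psi)$; thus $h_{C}=h$, and $h$ is the support function of $C$. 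I expect the main obstacle to be precisely the passage from the pointwise tangency on $D_{\varphi}$ to the inequality in \emph{every} direction, and this is exactly where the sign condition $\rho=h+h''>0$ enters, through the sign of $G'$. I would also note that twice differentiability of $h$ is just enough to form $x',y'$, so no additional regularity beyond the hypothesis is needed.
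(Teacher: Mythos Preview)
Your argument is correct. The paper does not actually prove this lemma: its ``proof'' consists solely of the citations \cite{gro}, \cite{sheph} and \cite{rolf2013}. You, by contrast, give a complete self-contained construction. The route you take---defining $C$ as the intersection of the half-planes $\{\langle z,n_{\psi}\rangle\le h(\psi)\}$, showing that the curve $\gamma$ of (\ref{first}) lies in $C$ via the derivative computation $G'(\varphi)=\rho(\varphi)\sin(\psi-\varphi)$, and then squeezing $h_{C}$ between the two obvious inequalities---is the standard direct proof one finds in the convex-geometry literature (and is essentially what the cited references contain). Your identification of the crucial step is exactly right: the positivity of $\rho=h+h''$ is what makes $\varphi=\psi$ the \emph{unique} maximizer of $G$, and this is precisely what upgrades the tangency $\langle\gamma(\varphi),n_{\varphi}\rangle=h(\varphi)$ to the global inequality $\langle\gamma(\varphi),n_{\psi}\rangle\le h(\psi)$ for all $\psi$. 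One small remark: since $h$ is only assumed twice differentiable (not $C^{2}$), $\rho$ and hence $G'$ need not be continuous, but the monotonicity of $G$ on $(\psi-\pi,\psi)$ and $(\psi,\psi+\pi)$ still follows from the mean value theorem applied to a differentiable function with derivative of constant sign, so no extra regularity is needed.
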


\begin{proof}
  See \cite{gro}, \cite{sheph} and \cite{rolf2013}.
\end{proof}
\begin{definition}
Let $C$ be a convex body of support function $\sigma_{C}$. The width of $C$ in the direction $u\in \mathbb{S}^{1}$ is $\sigma_{C}(u)+\sigma_{C}(-u)$.
\end{definition}

A convex body $C$ is of constant width if $\sigma_{C}(u)+\sigma_{C}(-u)=\alpha$ for all $u\in \mathbb{S}^{1}$ ($\alpha=2$, in our case). The quantities
$\sigma_{C}(u)+\sigma_{C}(-u)$ represent the distance between two different parallel support lines to $C$. One can see that
$h_{C}(\varphi)+h_{C}(\varphi+\pi)=2$ $\forall \varphi\in [0,2\pi]$.

\section{Study of $\lambda_{1}$ and $\lambda_{3}$}\label{firstmainresult}

The next theorem shows the existence of a solution for problem (\ref{intro2}). Obviously, it is equivalent to consider the constraint $D(\Omega)\leq \alpha$ or $D(\Omega)=\alpha$.

\begin{theorem}
For every  $k\in \mathbb{N}^{*}$ the problem
\begin{equation}
   \min\{\lambda_{\kappa}(\Omega),\; \Omega \mbox{open}\;\subset \mathbb{R}^{N},\; D(\Omega)\leq \alpha\}
\label{lamk}
\end{equation}
has at least a convex solution.
\end{theorem}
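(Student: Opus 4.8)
The plan is to use the direct method of the calculus of variations, exploiting the compactness of convex bodies under Hausdorff convergence together with the continuity result of Theorem \ref{continuity}. First I would fix $\kappa$ and consider a minimizing sequence $(\Omega_n)$ for problem (\ref{lamk}), so that $\lambda_{\kappa}(\Omega_n) \to \inf\{\lambda_{\kappa}(\Omega) : \Omega \text{ open}, D(\Omega) \le \alpha\}$. The crucial preliminary observation is that the problem can be reduced to convex competitors: by the inclusion monotonicity (\ref{monot}), replacing each $\Omega_n$ by its convex hull $\widehat{\Omega}_n$ can only decrease the eigenvalue, $\lambda_{\kappa}(\widehat{\Omega}_n) \le \lambda_{\kappa}(\Omega_n)$, while the diameter is preserved, $D(\widehat{\Omega}_n) = D(\Omega_n) \le \alpha$, since the diameter of a set equals the diameter of its convex hull. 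Hence I may assume from the outset that each $\Omega_n$ is a (bounded) convex open set with $D(\Omega_n) \le \alpha$, and this sequence is still minimizing.

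Next I would extract a convergent subsequence. Since all the $\Omega_n$ are convex with diameter bounded by $\alpha$, they are uniformly bounded; after a translation (which does not affect eigenvalues, by the invariance of the Dirichlet Laplacian under rigid motions) I may assume they all lie inside a fixed large ball $B$. The Blaschke selection theorem then guarantees a subsequence, still denoted $(\Omega_n)$, that converges in the Hausdorff metric to a convex body, whose interior I call $\Omega$. The diameter is continuous (indeed Lipschitz) with respect to Hausdorff convergence, so $D(\Omega) = \lim_n D(\Omega_n) \le \alpha$, confirming that $\Omega$ is admissible for (\ref{lamk}). It remains to check that $\Omega$ is nondegenerate, i.e.\ has nonempty interior: if the limit collapsed to a segment or a point, the first eigenvalue would blow up and the sequence could not be minimizing (any fixed competitor, e.g.\ a small ball of diameter $\alpha$, gives a finite value), so the limit must be a genuine convex body.

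Finally I would pass to the limit in the functional. Applying Theorem \ref{continuity} to the convex sequence $\Omega_n \to \Omega$ yields $\gamma$-convergence and in particular $\lambda_{\kappa}(\Omega_n) \to \lambda_{\kappa}(\Omega)$ for the fixed index $\kappa$. Combining this with the fact that $(\Omega_n)$ was minimizing gives
\begin{equation*}
\lambda_{\kappa}(\Omega) = \lim_{n\to\infty} \lambda_{\kappa}(\Omega_n) = \inf\{\lambda_{\kappa}(\Omega') : \Omega' \text{ open}, D(\Omega') \le \alpha\},
\end{equation*}
so the admissible convex set $\Omega$ realizes the infimum and is the desired solution.

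The main obstacle I anticipate is not the compactness step, which is standard via Blaschke selection, but rather ensuring the limit does not degenerate and verifying that the diameter constraint is genuinely preserved under Hausdorff limits in the right direction (the inequality $D(\Omega) \le \liminf D(\Omega_n)$ is what is needed, and for convex bodies diameter is in fact continuous, so this is clean). The reduction to convex sets is what makes the whole argument work, since Theorem \ref{continuity} requires convexity to guarantee $\gamma$-convergence from Hausdorff convergence — without convexity, Hausdorff convergence of open sets does not imply convergence of eigenvalues, and the proof would fail. The nondegeneracy of the limit is the one point requiring a small quantitative remark rather than a pure citation.
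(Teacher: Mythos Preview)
Your proposal is correct and follows essentially the same approach as the paper: reduce to convex competitors via the convex hull and monotonicity (\ref{monot}), extract a Hausdorff-convergent subsequence, apply Theorem \ref{continuity} for eigenvalue convergence, and rule out degeneration of the limit by the blow-up of $\lambda_\kappa$. The only cosmetic differences are that the paper verifies the diameter constraint $D(\Omega)\le\alpha$ via a contradiction argument based on Theorem \ref{Property} rather than invoking continuity of the diameter directly, and it does not make the translation step explicit; your version is arguably cleaner on both points.
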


\begin{proof}
Let $\Omega_{n}$ be a minimizing sequence. It is clear that $\Omega_{n}\subset conv(\Omega_{n})$ where $conv(\Omega_{n})$ is the convex hull of $\Omega_{n}$ and  that $conv(\Omega_{n})$ have the same diameter as $\Omega_{n}$ (see \cite{edgar}, p.166). By (\ref{monot}),
$\lambda_{\kappa}(conv(\Omega_{n}))\leq \lambda_{\kappa}(\Omega_{n})$.
Therefore $(conv(\Omega_{n}))_{n}$ is also a minimizing sequence.

Since the diameter of  $conv(\Omega_{n})$ is smaller than $\alpha$,  $conv(\Omega_{n})$ is a bounded sequence. Thus we can extract a sub-sequence still denoted by $conv(\Omega_{n})$ such that $conv(\Omega_{n})$ converges to $\Omega$ for the Hausdorff metric. Since $conv(\Omega_{n})$ and $\Omega$ are convex. By theorem \ref{continuity},  $conv(\Omega_{n})$ converges to $\Omega$ in the $\gamma$-convergence sense . In particular, for all fixed $\kappa$,
$\lambda_{\kappa}(conv(\Omega_{n}))$ converges to $\lambda_{\kappa}(\Omega)$ by the  $\gamma$-continuity of eigenvalues (see theorem \ref{continuity}).

We notice that the limit $\Omega$ is a "true" domain (i.e., it is not the empty set). Indeed if a minimizing sequence converge to the empty set, then the eigenvalue $\lambda_{\kappa}$ goes to infinity, which is absurd.

It remains to prove that
$D(\Omega)\leq \alpha$. By contradiction, we assume that $D(\Omega)> \alpha$. For $\varepsilon>0$, we can find a compact set $K_{\varepsilon}\subset \Omega$ such that $D(K_{\varepsilon})=D(\Omega)-\varepsilon >\alpha$. By theorem \ref{Property}, there exists  $\displaystyle n_{K_{\varepsilon}}\in \mathbb{N}$ such that $K_{\varepsilon}\subset conv(\Omega_{n})$ for all $\displaystyle n\geq n_{K_{\varepsilon}}$. This implies that $D(conv(\Omega_{n}))> \alpha$ which is absurd.
\end{proof}

We recall the isodiametric inequality that we use to prove our first theorem.

\begin{theorem}[Isodiametric inequality]
  For all set $A\subset \mathbb{R}^{N}$
  $$
  |A|\leq |B_{1}|\bigg(\frac{diam(A)}{2}\bigg)^{N}
  $$
where $B_{1}$ denotes the unit ball in the Euclidean space $\mathbb{R}^{N}$ and $|.|$ is the Lebesque measure.
\end{theorem}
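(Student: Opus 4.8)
The plan is to deduce the inequality from the Brunn--Minkowski inequality applied to the difference body of $A$, which yields a remarkably short argument. First I would reduce to a convenient setting: if $\mathrm{diam}(A)=+\infty$ the inequality is trivial, and replacing $A$ by its closure $\overline{A}$ only increases the measure while leaving the diameter unchanged, so it suffices to treat the case where $A=\overline{A}$ is compact (this also guarantees that the sumsets below are compact, hence measurable). Throughout, write $d=\mathrm{diam}(A)$.

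The geometric key step is a simple inclusion. Consider the rescaled difference set $\tfrac12(A-A)=\{\tfrac{x-y}{2}:x,y\in A\}$. For any $x,y\in A$ one has $|x-y|\le d$, hence $|\tfrac{x-y}{2}|\le d/2$, so that $\tfrac12(A-A)\subset \overline{B}(0,d/2)$, the closed ball of radius $d/2$ centred at the origin. Taking Lebesgue measure and using the scaling identity $|\tfrac12 E|=2^{-N}|E|$ gives $2^{-N}|A-A|\le |B_{1}|(d/2)^{N}$.

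It then remains to bound $|A-A|$ from below by $2^{N}|A|$. Here I would invoke the Brunn--Minkowski inequality $|E+F|^{1/N}\ge |E|^{1/N}+|F|^{1/N}$ with $E=A$ and $F=-A$. Since reflection through the origin preserves Lebesgue measure, $|-A|=|A|$, and therefore $|A-A|^{1/N}=|A+(-A)|^{1/N}\ge 2|A|^{1/N}$, that is $|A-A|\ge 2^{N}|A|$, equivalently $|A|\le 2^{-N}|A-A|$. Chaining this with the inclusion of the previous paragraph yields $|A|\le 2^{-N}|A-A|\le |B_{1}|(d/2)^{N}=|B_{1}|\big(\mathrm{diam}(A)/2\big)^{N}$, which is the desired estimate.

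The real content of the proof is the Brunn--Minkowski inequality itself; once it is granted, everything else is elementary, so I expect that to be the main (imported) obstacle rather than any step in the argument above. If one wished instead for a self-contained proof, the classical alternative is Steiner symmetrization: each symmetrization preserves the volume and does not increase the diameter, and an appropriate sequence of symmetrizations converges to a ball of the same volume, whose diameter is therefore at most $d$. In that route the delicate points are precisely the non-increase of the diameter under symmetrization and the convergence to a ball, which is why I prefer the Brunn--Minkowski argument for a clean write-up.
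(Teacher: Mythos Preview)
Your argument is correct: the inclusion $\tfrac12(A-A)\subset \overline{B}(0,d/2)$ combined with the Brunn--Minkowski bound $|A-A|\ge 2^{N}|A|$ gives the isodiametric inequality immediately, and your reduction to the compact case handles the measurability of the sumset cleanly.

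The paper, however, does not prove this statement at all: it simply cites \cite{evans}, p.~69, where the proof proceeds by Steiner symmetrization (exactly the alternative route you sketch at the end). So your proposal is not the same as the paper's treatment, which is merely a reference; compared to the cited source, you trade the geometric symmetrization argument for an appeal to Brunn--Minkowski. This is shorter and avoids the somewhat delicate convergence-to-a-ball step, at the cost of importing Brunn--Minkowski as a black box. Either approach is standard; yours is arguably the cleaner write-up if Brunn--Minkowski is already available.
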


\begin{proof}
  \cite{evans}, p:69.
\end{proof}

\begin{theorem}\label{iso}
\begin{equation}\label{Eq.R1}
  \lambda_{1}(B)=\min\{\lambda_{1}(\Omega),\; \Omega \;\mbox{open}\; \subset \mathbb{R}^{N}\; D(\Omega)=2\}
\end{equation}
where $B$ is the ball of diameter equals to $2$.
\end{theorem}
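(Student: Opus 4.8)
The plan is to combine the isodiametric inequality with the Faber--Krahn theorem and the elementary scaling law for Dirichlet eigenvalues. Recall first that under a homothety of ratio $t>0$ one has $\lambda_{1}(t\Omega)=t^{-2}\lambda_{1}(\Omega)$. Using this scaling, the Faber--Krahn theorem (Theorem \ref{Faber-Krahn}), which is stated for volume $1$, extends to arbitrary volumes: for every bounded open set $\Omega$,
\[
\lambda_{1}(\Omega)\ge \lambda_{1}\big(B_{|\Omega|}\big),
\]
where $B_{|\Omega|}$ denotes the ball having the same Lebesgue measure as $\Omega$. Indeed, rescaling $\Omega$ to unit volume, applying Faber--Krahn, and rescaling back produces exactly this inequality.

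Next I would bring in the diameter constraint. Let $\Omega$ be any open set with $D(\Omega)=2$. The isodiametric inequality applied with $\mathrm{diam}(\Omega)=2$ gives
\[
|\Omega|\le |B_{1}|\Big(\tfrac{D(\Omega)}{2}\Big)^{N}=|B_{1}|,
\]
so that $|\Omega|\le |B_{1}|=|B|$, since $B$ is precisely the unit ball (radius $1$, diameter $2$).

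It then remains to compare $\lambda_{1}\big(B_{|\Omega|}\big)$ with $\lambda_{1}(B)$. The ball $B_{|\Omega|}$ has radius $r\le 1$ because $|\Omega|\le |B|$, and by the scaling law the map $r\mapsto \lambda_{1}(B_{r})=r^{-2}\lambda_{1}(B_{1})$ is non-increasing; hence $\lambda_{1}\big(B_{|\Omega|}\big)\ge \lambda_{1}(B)$. Chaining the two inequalities yields $\lambda_{1}(\Omega)\ge \lambda_{1}(B)$ for every admissible $\Omega$. Since $B$ itself has diameter $2$ and is therefore admissible, the lower bound is attained, so $B$ is a minimizer, which is exactly (\ref{Eq.R1}).

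I do not expect a genuine obstacle here, as the three ingredients are all available from the excerpt. The only point requiring care is conceptual: Faber--Krahn controls $\lambda_{1}$ only among sets of a \emph{prescribed} volume, whereas the isodiametric inequality delivers merely a volume \emph{inequality}. The bridge between the two is the monotonicity of $r\mapsto\lambda_{1}(B_{r})$ under scaling, which is precisely what converts the bound $|\Omega|\le|B|$ into the desired bound on $\lambda_{1}$.
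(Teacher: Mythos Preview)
Your proof is correct and follows essentially the same approach as the paper: Faber--Krahn plus the isodiametric inequality. The only cosmetic difference is in the final step, where you compare $\lambda_1(B_{|\Omega|})$ and $\lambda_1(B)$ via the scaling law $\lambda_1(B_r)=r^{-2}\lambda_1(B_1)$, whereas the paper observes that the smaller ball $\Omega^\ast$ can be taken inside $B$ and then invokes the domain monotonicity (\ref{monot}); both arguments amount to ``smaller balls have larger first eigenvalue''.
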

\begin{proof}
Let $\Omega$ be an open set of $\mathbb{R}^{N}$ with $D(\Omega)=2$. Let $\Omega^{*}$ be the ball of the same volume as $\Omega$. According to  Faber-Krahn's theorem (Theorem \ref{Faber-Krahn}), we have
$
  \lambda_{1}(\Omega^{*})\leq \lambda_{1}(\Omega).
$

Let $B$ be the ball of diameter equals to $2$. By the isodiametric inequality we have $
  |\Omega^{*}|=|\Omega|\leq |B|.
$
Since $\Omega^{*}$ is a ball, then $\Omega^{*} \subset B$. By (\ref{monot})
$
 \lambda_{1}(B)\leq \lambda_{1}(\Omega^{*}) \mbox{ so that } \lambda_{1}(B)\leq  \lambda_{1}(\Omega).
$
\end{proof}
\begin{theorem}
  $\lambda_{3}$ is locally minimized by the disk among sets of constant width.
\end{theorem}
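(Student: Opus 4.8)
The plan is to reduce this statement to the already-established Wolf--Keller theorem (Theorem \ref{Wolf-Keller}), which furnishes local minimality of $\lambda_3$ by the disk among sets of constant \emph{measure}. The bridge between the constant-width constraint and the constant-measure constraint is the classical fact that, among convex bodies of constant width $\alpha$, the disk has the largest area. Indeed, by Barbier's theorem every convex body of constant width $\alpha$ has perimeter exactly $\pi\alpha$, and the isoperimetric inequality $4\pi|\Omega|\le P^2$ then forces $|\Omega|$ to be maximal for the disk. In our normalization ($\alpha=2$) this gives $|\Omega|\le|B|=\pi$ for every set $\Omega$ of constant width $2$, where $B$ is the disk of radius $1$.

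Given such a set $\Omega$ close to $B$, I would dilate it to match the area of $B$: set $t=\sqrt{|B|/|\Omega|}\ge 1$, so that $|t\Omega|=|B|$. Using the homogeneity of the Dirichlet eigenvalues under scaling, $\lambda_3(t\Omega)=t^{-2}\lambda_3(\Omega)\le\lambda_3(\Omega)$, since $t\ge 1$. The enlarged set $t\Omega$ no longer has constant width $2$, but that is harmless: it is now a legitimate competitor in the constant-measure problem, having the same measure as $B$. Because $\Omega\to B$ forces $|\Omega|\to|B|$ and hence $t\to1$, the dilate $t\Omega$ lies in any prescribed neighborhood of $B$ once $\Omega$ is close enough to $B$. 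Applying Theorem \ref{Wolf-Keller} to $t\Omega$ then yields $\lambda_3(B)\le\lambda_3(t\Omega)$, and chaining the inequalities gives $\lambda_3(B)\le\lambda_3(t\Omega)\le\lambda_3(\Omega)$, which is precisely local minimality of $\lambda_3$ by the disk among sets of constant width.

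The main technical point requiring care is the compatibility of the neighborhoods. The local minimality in Theorem \ref{Wolf-Keller} holds on some neighborhood $\mathcal{V}$ of the disk in the relevant topology (Hausdorff, respectively smooth deformation), and I must verify that the map $\Omega\mapsto t\Omega$ sends a sufficiently small constant-width neighborhood of $B$ into $\mathcal{V}$. This follows from the continuity of $t=\sqrt{|B|/|\Omega|}$ together with $t\to1$, so that the dilation is a vanishingly small perturbation and cannot push $t\Omega$ outside $\mathcal{V}$. A secondary, routine point is to quote the area-maximality of the disk among constant-width bodies with a precise reference; beyond that, the argument is an immediate consequence of eigenvalue scaling and the Wolf--Keller result, with no further computation needed.
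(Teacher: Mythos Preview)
Your argument is correct and follows the same route as the paper: establish $|\Omega|\le|B|$ for constant-width bodies and then invoke Wolf--Keller (Theorem~\ref{Wolf-Keller}). The only differences are cosmetic---the paper reaches $|C|\le|B|$ via the isodiametric inequality rather than Barbier plus isoperimetric, and applies Theorem~\ref{Wolf-Keller} directly without spelling out the dilation step that you (more carefully) include.
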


\begin{proof}

Let $\varepsilon>0$. Let $B$ be the open disk of diameter $\alpha=2$. Let $C$ be an open convex of constant width $\alpha$ such that
$d_{H}(C,B)< \varepsilon$. By  the isodiametric inequality, we have  $|C|\leq |B|$. By theorem \ref{Wolf-Keller}
 $$
 \lambda_3(B)\leq \lambda_3(C).
 $$

\end{proof}

\section{Bodies of constant width obtained by a small deformation of a disk}\label{sec:bodies}

We are going to study the minimality of Dirichlet-Laplacian in a neighborhood of the disk among open sets of constant width. The question here is how to construct this neighborhood, we shall consider convex bodies of constant width near to the disk. The most confident way is to perturb the support function.\\

The support function of the unit disk $D$ is given by $h_D(\varphi)=1\; \forall \varphi\in [0,2\pi]$. We are going to study some sufficient conditions which guarantee that $1+\varepsilon f(\varphi)+\varepsilon^{2} g(\varphi)$ is a support function, for some functions  $f$, $g$ on $[0,2\pi[$.

\begin{lemma}\label{zak1}
Let $\alpha$, $\varepsilon$ be  positif real numbers and $h$ a function defined by
\begin{equation}
h(\varphi)=1+\varepsilon f(\varphi)+\varepsilon^{2} g(\varphi)\quad \forall \varphi\in \mathbb{R}
\label{hep}
\end{equation}
where $f(\varphi)=\sum_{-\infty}^{+\infty}a_{n}e^{in\varphi}$, $g(\varphi)=\sum_{-\infty}^{+\infty}b_{n}e^{in\varphi}$,  $a_{-n}=\overline{a_{n}}$ and $b_{-n}=\overline{b_{n}}$. If $a_{n}\in O(\frac{1}{n^{3+\alpha}})$, $b_{n}\in O(\frac{1}{n^{3+\alpha}})$ and $a_{2n}=b_{2n}=0$ for all n integers, then there exists a convex body $\Omega_{\varepsilon}$ of constant width equal $2$  such that $h=h_{\Omega_{\varepsilon}}$.
\end{lemma}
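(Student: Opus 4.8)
The plan is to verify, for $\varepsilon$ small enough, the hypotheses of Lemma \ref{Lemme T.bayen} (periodicity, $C^2$ regularity, and $h+h''>0$) together with the constant-width identity $h(\varphi)+h(\varphi+\pi)=2$; these two facts combined furnish a convex body $\Omega_\varepsilon$ with support function $h$ and of constant width $2$. I would begin with regularity. Since $a_n,b_n\in O(1/n^{3+\alpha})$, the series for $f$ and $g$ and their termwise derivatives converge absolutely and uniformly: the Fourier coefficients of $h''$ are $-\varepsilon\, n^2 a_n-\varepsilon^2 n^2 b_n\in O(1/n^{1+\alpha})$, which is summable because $\alpha>0$. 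Hence $h\in C^2(\mathbb{R})$; the conjugation relations $a_{-n}=\overline{a_n}$ and $b_{-n}=\overline{b_n}$ make $h$ real-valued, and $2\pi$-periodicity is automatic from the exponential basis.

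Next I would treat the positivity of the radius of curvature. Differentiating the series gives
$$
h(\varphi)+h''(\varphi)=1+\varepsilon\sum_{n}(1-n^2)a_n e^{in\varphi}+\varepsilon^2\sum_{n}(1-n^2)b_n e^{in\varphi}.
$$
By the decay estimate above, the coefficients $(1-n^2)a_n$ and $(1-n^2)b_n$ are in $O(1/n^{1+\alpha})$, so the two correction series are uniformly bounded by constants $M_f$ and $M_g$ that depend only on the implicit constants in the $O$-hypotheses, not on $\varepsilon$. Therefore $h+h''\ge 1-\varepsilon M_f-\varepsilon^2 M_g$, which is strictly positive once $\varepsilon$ is sufficiently small. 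Lemma \ref{Lemme T.bayen} then yields a convex body whose support function is $h$.

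For the constant-width condition I would exploit the vanishing of all even Fourier modes, $a_{2n}=b_{2n}=0$ (in particular $a_0=b_0=0$). Using $e^{in\pi}=(-1)^n$,
$$
f(\varphi)+f(\varphi+\pi)=\sum_{n}\bigl(1+(-1)^n\bigr)a_n e^{in\varphi},
$$
and the factor $1+(-1)^n$ annihilates the odd modes while doubling the even ones, which are exactly the coefficients assumed to vanish. Hence $f(\varphi)+f(\varphi+\pi)=0$, and likewise $g(\varphi)+g(\varphi+\pi)=0$, so that $h(\varphi)+h(\varphi+\pi)=2$ for every $\varphi$. This is precisely the identity characterizing constant width $2$.

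I expect the only genuine content to be the positivity step: the regularity and the width identity are routine Fourier bookkeeping, whereas the convexity condition $h+h''>0$ forces $\varepsilon$ to be taken below a threshold determined by $M_f$ and $M_g$. That smallness is the substantive restriction implicit in the statement, and making it explicit is the main point to be careful about.
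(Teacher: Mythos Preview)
Your proof is correct and follows exactly the same route as the paper's own argument: verify $C^2$ regularity from the decay $a_n,b_n\in O(1/n^{3+\alpha})$, check $h+h''>0$ for small $\varepsilon$ in order to invoke Lemma~\ref{Lemme T.bayen}, and then use $a_{2n}=b_{2n}=0$ to obtain $h(\varphi)+h(\varphi+\pi)=2$. The only difference is that you spell out each step (the summability of $n^2 a_n$, the explicit uniform bound $1-\varepsilon M_f-\varepsilon^2 M_g$, and the Fourier cancellation for the width identity) where the paper is terse; in particular your explicit remark that the conclusion requires $\varepsilon$ below a threshold matches the paper's ``for small $\varepsilon$''.
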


\begin{proof}
It is clear that $h$ is twice differentiable by the assumption $a_{n}, b_{n}\in O(\frac{1}{n^{3+\alpha}})$. It is $2\pi-$ periodic by construction and $h+h^{''}>0$ for small $\varepsilon$. By lemma \ref{Lemme T.bayen}, $h$ is a support function of a convex body $\Omega_{\varepsilon}$.

Since $a_{2n}=b_{2n}=0$, one has $h(\varphi)+h(\varphi+\pi)=2$ for every $\varphi\in \mathbb{R}$, that is,  $h$ is a support function of a convex body with constant width $\alpha=2$.
\end{proof}

\begin{assumption}\label{asumption}
We shall, from now on, assume that the sequences $a_{n}$ and $b_{n}$ satisfy the assumptions of  lemma \ref{zak1}.
\end{assumption}

In order to calculate the eigenvalues in $\Omega_{\varepsilon}$, we need to calculate the radius of $\Omega_{\varepsilon}$ as the authors did in \cite{ray1896}, \cite{wolf1994} and \cite{berger2015} for open sets of fixed measure. We have two possible parameterizations of $\partial \Omega_{\varepsilon}$:
the parametrization (\ref{first}) with the support function and the {polar} parametrization $(R(\theta,\varepsilon),\theta)$.

We are now going to write the last parametrization. Since $\Omega_{\varepsilon}$ is a convex, it is a star shaped set. Furthermore, according to (\cite{morvan}, p.80) $d_{H}(\Omega_{\varepsilon},D)=||h_{\Omega_{\varepsilon}}-h_{D}||_{\infty}=\displaystyle\sup_{\varphi\in [0,2\pi[}||\varepsilon f(\varphi)+\varepsilon^{2}g(\varphi)||$.

This implies that the radius $R(\theta,\varepsilon)$ can be written as
\begin{equation}
R(\theta,\varepsilon)=1+\varepsilon F(\theta)+\varepsilon^{2}G(\theta)+O(\varepsilon^{3}).
\label{eq.22}
\end{equation}

\begin{figure}[!h]
  \centering
  \includegraphics[width=6.5cm]{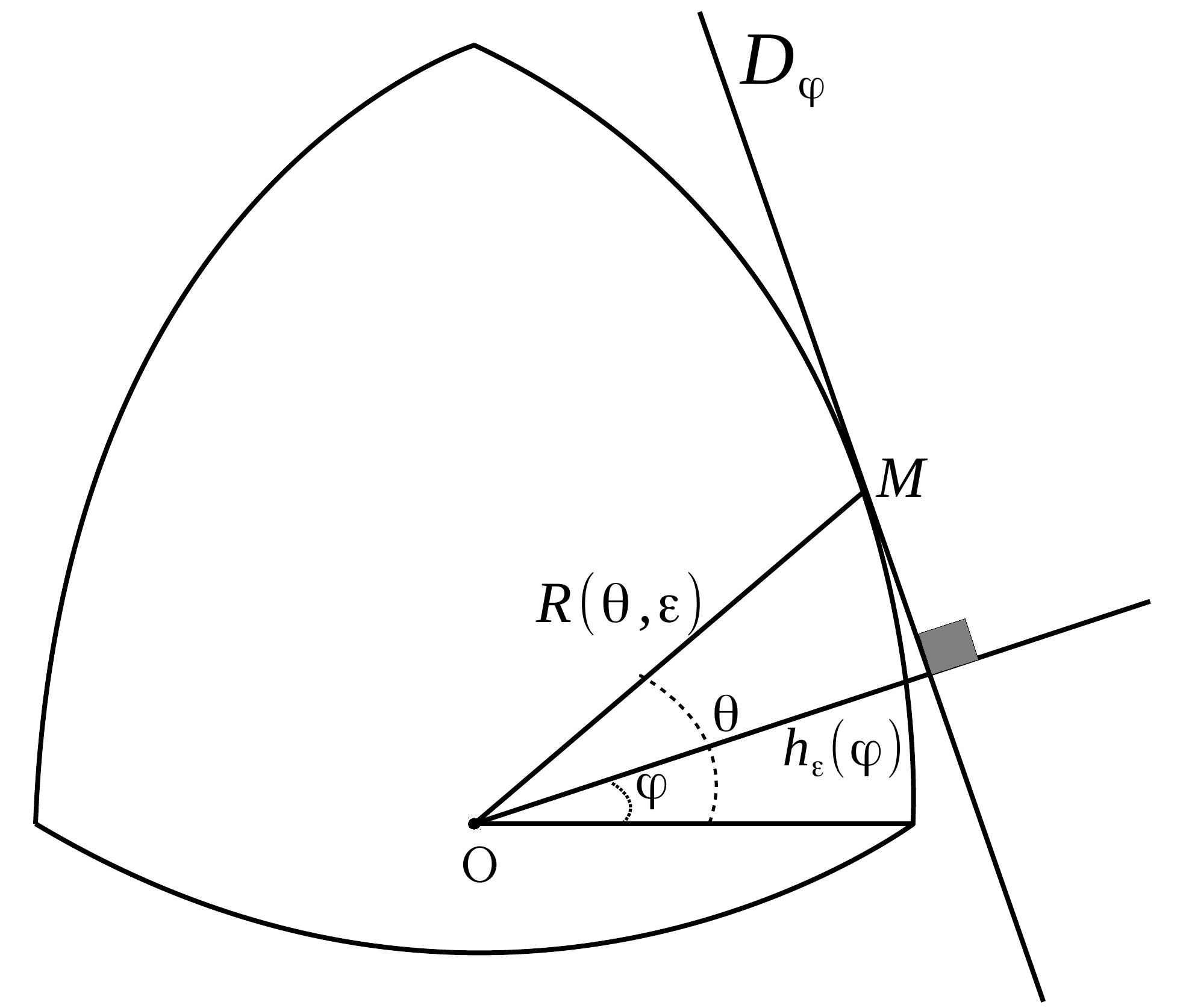}
  \caption{An example of convex body with constant width.}\label{Fig1}
\end{figure}

As stated in lemma \ref{Lemme T.bayen}, the support function $h_{\Omega_{\varepsilon}}$ is $C^{2}$. Therefore, the boundary of $\Omega_{\varepsilon}$ is a regular sub-manifold of class $C^{2}$ (\cite{rolf2013}, p:111). Thus the radius $R(\theta,\varepsilon)$ of the polar parametrization of $\Omega_{\varepsilon}$ is also $C^{2}$.
\begin{definition}[Big $O$ notation-Landau symbol]
Let $f$ and $g$ be a positive real functions and $a\in \mathbb{R}$. We write $f=O(g)$ as $x\rightarrow a$ if there are two positive numbers $\delta$ and $A$ so that $|f(x)|< A |g(x)|$ whenever $|x-a|< \delta$.
\end{definition}
\begin{lemma}
Let $\Omega_{\varepsilon}$ be a convex body of constant width obtained by the above deformation of the disk. The radius in a given point $M\in \partial \Omega_{\varepsilon}$ is
\begin{equation}
R(\theta,\varepsilon)=1+\varepsilon f(\theta)+\varepsilon^{2}\Bigg( g(\theta)-\frac{1}{2}(f^{'}(\theta))^{2}\Bigg)+O(\varepsilon^{3}).
\label{eqR2}
\end{equation}
\end{lemma}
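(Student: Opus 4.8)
The plan is to relate the two parametrizations of $\partial\Omega_\varepsilon$ appearing in the text: the support-function parametrization (\ref{first}), which is indexed by the \emph{normal angle} $\varphi$, and the polar parametrization $(R(\theta,\varepsilon),\theta)$ of (\ref{eq.22}), indexed by the \emph{polar angle} $\theta$. First I would observe that writing (\ref{first}) in complex form gives the boundary point $x(\varphi)+iy(\varphi) = (h(\varphi)+ih'(\varphi))\,e^{i\varphi}$. Reading off modulus and argument yields the two exact identities
\begin{equation}
R = \sqrt{h^2 + (h')^2}, \qquad \theta = \varphi + \arctan\!\Big(\frac{h'}{h}\Big),
\label{eq.plan1}
\end{equation}
where all functions are evaluated at $\varphi$ (the polar parametrization is legitimate because $\Omega_\varepsilon$ is convex, hence star-shaped about the origin). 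The crucial point, and what makes the second-order coefficient nontrivial, is that $\theta$ and $\varphi$ do not coincide: they already differ at order $\varepsilon$, and this discrepancy feeds back into the $\varepsilon^2$ term of $R$ once it is re-expressed as a function of $\theta$.

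Next I would carry out a direct Taylor expansion of the first identity in (\ref{eq.plan1}). Substituting $h = 1 + \varepsilon f + \varepsilon^2 g$ and $h' = \varepsilon f' + \varepsilon^2 g'$ gives $h^2 + (h')^2 = 1 + 2\varepsilon f + \varepsilon^2\big(f^2 + 2g + (f')^2\big) + O(\varepsilon^3)$, and applying $\sqrt{1+u} = 1 + \tfrac{u}{2} - \tfrac{u^2}{8} + O(u^3)$ produces
\begin{equation}
R(\varphi,\varepsilon) = 1 + \varepsilon f(\varphi) + \varepsilon^2\Big(g(\varphi) + \tfrac{1}{2}(f'(\varphi))^2\Big) + O(\varepsilon^3).
\label{eq.plan2}
\end{equation}
Note the $+\tfrac12(f')^2$ here: it carries the \emph{opposite} sign to the claimed coefficient, precisely because (\ref{eq.plan2}) is still expressed in the wrong variable $\varphi$.

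Finally I would invert the angle relation. Expanding the second identity in (\ref{eq.plan1}) gives $\theta = \varphi + \varepsilon f'(\varphi) + O(\varepsilon^2)$, hence $\varphi = \theta - \varepsilon f'(\theta) + O(\varepsilon^2)$; remarkably, only this first-order correction is needed. Substituting into (\ref{eq.plan2}), the two genuinely $\varepsilon^2$ terms $g(\varphi)$ and $(f'(\varphi))^2$ may simply have $\varphi$ replaced by $\theta$ (the difference is absorbed into $O(\varepsilon^3)$), while the linear term expands as $\varepsilon f(\varphi) = \varepsilon f(\theta) + \varepsilon(\varphi-\theta)f'(\theta) + O(\varepsilon^3) = \varepsilon f(\theta) - \varepsilon^2(f'(\theta))^2 + O(\varepsilon^3)$. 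Combining this extra $-\varepsilon^2(f'(\theta))^2$ with the $+\tfrac12(f'(\theta))^2$ already present in (\ref{eq.plan2}) turns the coefficient into $g(\theta) - \tfrac12(f'(\theta))^2$, which is exactly (\ref{eqR2}). The main obstacle is conceptual rather than computational: one must keep careful track of the order-$\varepsilon$ shift between the normal angle $\varphi$ and the polar angle $\theta$ through the composition, since it is this shift alone that flips the sign of the $(f')^2$ contribution; the individual expansions are otherwise elementary.
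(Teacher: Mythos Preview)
Your proof is correct and rests on the same underlying identity as the paper's, namely the exact relation between the support parametrization and the polar parametrization together with the order-$\varepsilon$ shift between the normal angle $\varphi$ and the polar angle $\theta$. The difference is one of direction: the paper posits $R=1+\varepsilon F+\varepsilon^2 G+O(\varepsilon^3)$ with unknown $F,G$, computes $h(\varphi)=R^2/\sqrt{R^2+R'^2}$ and $\varphi=\theta-\arctan(R'/R)$ from the polar side, expands, and then \emph{matches} against the given $h=1+\varepsilon f+\varepsilon^2 g$ to solve for $F$ and $G$. You instead run the computation forward, reading off $R=\sqrt{h^2+h'^2}$ and $\theta=\varphi+\arctan(h'/h)$ directly from the complex form $(h+ih')e^{i\varphi}$, so that $R$ emerges explicitly in terms of the known data with no auxiliary unknowns and no matching step. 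Your route is slightly more economical and makes transparent why the sign of the $(f')^2$ term flips---the $+\tfrac12(f')^2$ from the modulus expansion is overtaken by the $-(f')^2$ coming from the change of variable---whereas in the paper this cancellation is distributed across the separate determinations of $F$ and $G$.
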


\begin{proof}
Let $h_{\Omega_{\varepsilon}}(\varphi)=1+\varepsilon f(\varphi)+\varepsilon^{2} g(\varphi)$ be the support function of $\Omega_{\varepsilon}$ satisfying the assumptions of lemma \ref{zak1}.

The polar parametrization of $\Omega_{\varepsilon}$ is given by
\begin{equation}\label{eq.44}
  \left\{
    \begin{array}{ll}
     x(\theta)=R(\theta,\varepsilon)\cos(\theta) \\
     y(\theta)=R(\theta,\varepsilon)\sin(\theta)
    \end{array}.
  \right.
\end{equation}

For a given $\varepsilon >0$, we denote $R(\theta)= R(\theta, \varepsilon)$. The exterior normal vector to the boundary of $\Omega_{\varepsilon}$ is defined by
\begin{eqnarray}\label{normal1}
  n      &=& \frac{1}{\sqrt{R^{2}(\theta)+R^{'2}(\theta)}}\Bigg[R(\theta)\left(\begin{array}{c}
                                                 \cos(\theta) \\
                                                 \sin(\theta)
                                               \end{array}\right)-R^{'}(\theta)\left(\begin{array}{c}
                                                 -\sin(\theta) \\
                                                 \cos(\theta)
                                               \end{array}\right)\Bigg]\,.
\end{eqnarray}
The parametrization of $\Omega_{\varepsilon}$ using the support function is
\begin{equation}\label{eq.55}
  \left\{
    \begin{array}{ll}
      x(\varphi)=h_{\Omega_{\varepsilon}}(\varphi)\cos(\varphi)-h_{\Omega_{\varepsilon}}^{'}(\varphi)\sin(\varphi) \\
      y(\varphi)=h_{\Omega_{\varepsilon}}(\varphi)\sin(\varphi)+h_{\Omega_{\varepsilon}}^{'}(\varphi)\cos(\varphi)\,.
    \end{array}
  \right.
\end{equation}
The exterior normal is
\begin{equation}\label{normal2}
n=\left(
\begin{array}{c}
   \cos(\varphi) \\
  \sin(\varphi)
\end{array}\right).
\end{equation}

Equations (\ref{normal1}) and (\ref{normal2}) give
$$
\left(
\begin{array}{c}
   \cos(\varphi) \\
  \sin(\varphi)
\end{array}\right)=\left(\begin{array}{cc}
                    \cos(\theta)  & -\sin(\theta) \\
                     \sin(\theta) & \cos(\theta)
                   \end{array}\right)\left(
\begin{array}{c}
  \frac{R(\theta)}{\sqrt{R^{2}(\theta)+R^{'2}(\theta)}} \\
  \frac{-R^{'}(\theta)}{\sqrt{R^{2}(\theta)+R^{'2}(\theta)}}
\end{array}\right),
$$
so,
$$
arg\left(
\begin{array}{c}
   \cos(\varphi) \\
  \sin(\varphi)
\end{array}\right)=\theta+arg\left(
\begin{array}{c}
  \frac{R(\theta)}{\sqrt{R^{2}(\theta)+R^{'2}(\theta)}} \\
  \frac{-R^{'}(\theta)}{\sqrt{R^{2}(\theta)+R^{'2}(\theta)}}
\end{array}\right)
$$

where arg denoted the argument. Therefore,
\begin{eqnarray*}
 arg\left(
\begin{array}{c}
  \frac{R(\theta)}{\sqrt{R^{2}(\theta)+R^{'2}(\theta)}} \\
  \frac{-R^{'}(\theta)}{\sqrt{R^{2}(\theta)+R^{'2}(\theta)}}
\end{array}\right)  &=& \displaystyle \arctan \Bigg( \frac{\frac{-R^{'}(\theta)}{\sqrt{R^{2}(\theta)+R^{'2}(\theta)}}}{\frac{R(\theta)}{\sqrt{R^{2}(\theta)+R^{'2}(\theta)}}}\Bigg)\\
                    &=& \arctan\Bigg(\frac{-R^{'}(\theta)}{R(\theta)}\Bigg) \\
                    &=& \frac{-R^{'}(\theta)}{R(\theta)}+ O(\varepsilon^{2})
\end{eqnarray*}

That implies

$$
  \varphi= \theta-\frac{R^{'}(\theta)}{R(\theta)}+O(\varepsilon^{2}).
$$
%

Let $M$ be the point defined by $\partial\Omega_{\varepsilon}\cap D_{\varphi}=\{M\}$ (see figure \ref{Fig1}) where $D_{\varphi}$ is the support line defined with normal vector

\begin{eqnarray*}
   n   &=& (\cos(\varphi),\sin(\varphi))\\
       &=& \frac{1}{\sqrt{R^{2}(\theta)+R^{'2}(\theta)}}\Bigg[R(\theta)\left(\begin{array}{c}
                                                 \cos(\theta) \\
                                                 \sin(\theta)
                                               \end{array}\right)-R^{'}(\theta)\left(\begin{array}{c}
                                                 -\sin(\theta) \\
                                                 \cos(\theta)
                                               \end{array}\right)\Bigg].
\label{n1}
\end{eqnarray*}

By definition of the support function

\begin{eqnarray*}
  h_{\Omega_{\varepsilon}}(\varphi) &=& <\overrightarrow{OM},n>\\
             &=&  <R(\theta)\left(\begin{array}{c}
                                                 \cos(\theta) \\
                                                 \sin(\theta)
                                               \end{array}\right),\frac{1}{\sqrt{R^{2}(\theta)+R^{'2}(\theta)}}\Bigg[R(\theta)\left(\begin{array}{c}
                                                 \cos(\theta) \\
                                                 \sin(\theta)
                                               \end{array}\right)-R^{'}(\theta)\left(\begin{array}{c}
                                                 -\sin(\theta) \\
                                                 \cos(\theta)
                                               \end{array}\right)\Bigg]> \\
             &=& \frac{R^{2}(\theta)}{\sqrt{R^{'2}(\theta)+R^{2}(\theta)}}= R(\theta)\frac{1}{\sqrt{1+\frac{R^{'2}(\theta)}{R^{2}(\theta)}}} \\
             &=& R(\theta)\Bigg(1-\frac{1}{2}\frac{R^{'2}(\theta)}{R^{2}(\theta)}+O(\varepsilon^{3})\Bigg)= R(\theta)-\frac{1}{2}\frac{R^{'2}(\theta)}{R(\theta)}+O(\varepsilon^{3})\\
             &=& R(\theta)-\frac{1}{2}\frac{\Bigg(\varepsilon F^{'}(\theta)+\varepsilon^{2}G^{'}(\theta)+O(\varepsilon^{3})\Bigg)^{2}}{ 1+\varepsilon F(\theta)+\varepsilon^{2}G(\theta)+O(\varepsilon^{3})}+O(\varepsilon^{3})\\
             &=& R(\theta)-\frac{1}{2}\frac{\varepsilon^{2} (F^{'}(\theta))^{2}+O(\varepsilon^{3})}{ 1+\varepsilon F(\theta)+\varepsilon^{2}G(\theta)+O(\varepsilon^{3})}+O(\varepsilon^{3})\\
             &=& R(\theta)-\frac{1}{2}\Bigg(\varepsilon^{2} (F^{'}(\theta))^{2}+O(\varepsilon^{3})\Bigg)\Bigg(1-\varepsilon F(\theta)-\varepsilon^{2}G(\theta)+O(\varepsilon^{3})\Bigg)+O(\varepsilon^{3})\\
             &=& R(\theta)-\frac{1}{2}\varepsilon^{2} (F^{'}(\theta))^{2}+O(\varepsilon^{3})\\
             &=& 1+\varepsilon F(\theta)+\varepsilon^{2}G(\theta)+O(\varepsilon^{3})-\frac{1}{2}\varepsilon^{2} (F^{'}(\theta))^{2}+O(\varepsilon^{3})\\
             &=&1+\varepsilon F(\theta)+\varepsilon^{2}\Bigg(G(\theta)-\frac{1}{2}(F^{'}(\theta))^{2}\Bigg)+O(\varepsilon^{3}).
\end{eqnarray*}

According to the expression of $h_{\Omega_{\varepsilon}}$ in (\ref{hep}), we can conclude that
$
f(\varphi)=F(\theta)\; \mbox{ and }\; g(\varphi)=G(\theta)-\frac{1}{2}(F^{'}(\theta))^{2}.
$
By an asymptotic expansion of the function $f$ in the neighborhood of  $\theta$, we obtain:
\begin{eqnarray*}
  F(\theta) &=& f(\varphi)= f(\theta-\frac{R^{'}(\theta)}{R(\theta)})
            = f(\theta)- \frac{R^{'}(\theta)}{R(\theta)}f^{'}(\theta)+O(\varepsilon^{2}) \\
            &=& f(\theta)-\varepsilon F^{'}(\theta)f^{'}(\theta)+O(\varepsilon^{2}) \\
            &=& f(\theta)-\varepsilon (f^{'}(\theta))^{2}+O(\varepsilon^{2}).
\end{eqnarray*}

Using the same method as above, we obtain:
\begin{eqnarray*}
  G(\theta) &=& g(\varphi)+\frac{1}{2}(F^{'}(\theta))^{2}
            = g(\theta)+\frac{1}{2}(f^{'}(\theta))^{2}+O(\varepsilon^{2}).
\end{eqnarray*}

Replacing in  (\ref{eq.22}), we conclude
$$
       R(\theta,\varepsilon)=1+\varepsilon f(\theta)+\varepsilon^{2}\Bigg(g(\theta)-\frac{1}{2}(f^{'}(\theta))^{2}\Bigg)+O(\varepsilon^{3}).
$$
\end{proof}

\section{Asymptotic expansion of eigenvalues with respect to the radial deformation}\label{Dirichlet}
\label{sec:Dirichlet}
The aim of this section is to compute the eigenvalues of the Dirichlet Laplacian on a set of constant width with respect to the radial deformation of the unit disk.
Let us first recall the analytic expression of the eigenvalues and eigenfunctions of the Dirichlet Laplacian on the disk.

\begin{theorem}\label{theo9}
  The eigenvalues and eigenfunctions of the Dirichlet-Laplacian
  on the disk of radius $R$ (normalized for the $L^{2}$-norm) are given by
\begin{equation}\label{eq.2}
\begin{array}{lll}
\lambda=\frac{j_{0,p}^{2}}{R^{2}},
&
u(r,\theta)=\sqrt{\frac{1}{\pi}}\frac{1}{R|J_{0}^{'}(j_{0,p})|}J_{0}\Bigg(\frac{j_{0,p}r}{R}\Bigg)& p\geq 1
\\
\lambda=\frac{j_{m,p}^{2}}{R^{2}},
&
u(r,\theta)=\left\{
                    \begin{array}{ll}
                      \sqrt{\frac{2}{\pi}}\frac{1}{R|J_{m}^{'}(j_{m,p})|}J_{m}\Bigg(\frac{j_{m,p}r}{R}\Bigg) \cos(m\theta) \\
                     \sqrt{\frac{2}{\pi}}\frac{1}{R|J_{m}^{'}(j_{m,p})|}J_{m}\Bigg(\frac{j_{m,p}r}{R}\Bigg) \sin(m\theta)
                    \end{array}
                  \right.& m,p\geq 1
\end{array}
\end{equation}

 where $j_{m,p}$ is the $p-$th zero of the Bessel function $J_{m}$.
\end{theorem}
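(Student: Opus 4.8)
The plan is to solve the eigenvalue problem \eqref{eq.0} on the disk $\{r<R\}$ by separation of variables in polar coordinates $(r,\theta)$, in which the Laplacian reads $\Delta u = u_{rr}+\tfrac{1}{r}u_r+\tfrac{1}{r^2}u_{\theta\theta}$. Seeking a nonzero solution of the form $u(r,\theta)=f(r)g(\theta)$ and inserting it into $-\Delta u=\lambda u$, I would divide by $fg/r^2$ to decouple the variables, obtaining a separation constant $\mu$ governing the angular factor through $g''+\mu g=0$ and the radial factor through $r^2 f''+rf'+(\lambda r^2-\mu)f=0$. Since $u$ must be single-valued on the disk, $g$ is $2\pi$-periodic, which forces $\mu=m^2$ for some integer $m\geq 0$ and $g(\theta)\in\operatorname{span}\{\cos(m\theta),\sin(m\theta)\}$; this already accounts for the pair of eigenfunctions $\cos(m\theta),\sin(m\theta)$ when $m\geq 1$ and for the single constant mode when $m=0$.

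Next I would treat the radial equation: after the rescaling $\rho=\sqrt{\lambda}\,r$ it becomes Bessel's equation of order $m$, namely $\rho^2 f''+\rho f'+(\rho^2-m^2)f=0$, whose general solution is a combination of $J_m(\sqrt{\lambda}\,r)$ and $Y_m(\sqrt{\lambda}\,r)$. Requiring $u\in H_0^1$, in particular boundedness at the origin, discards the singular solution $Y_m$, so $f(r)=J_m(\sqrt{\lambda}\,r)$ up to a constant. The Dirichlet condition $u(R,\theta)=0$ then reads $J_m(\sqrt{\lambda}\,R)=0$, which holds exactly when $\sqrt{\lambda}\,R$ is a positive zero of $J_m$; writing this zero as $j_{m,p}$ yields $\lambda=j_{m,p}^2/R^2$ as claimed.

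I would then fix the normalization constants by computing the $L^2$ norm of each candidate eigenfunction as a product of an angular and a radial integral. For the angular factor, $\int_0^{2\pi}\cos^2(m\theta)\,d\theta=\int_0^{2\pi}\sin^2(m\theta)\,d\theta=\pi$ when $m\geq 1$, whereas the constant mode contributes $2\pi$; this is the origin of the $\sqrt{2/\pi}$ versus $\sqrt{1/\pi}$ prefactors. For the radial factor I would invoke the classical Bessel normalization identity $\int_0^R J_m(j_{m,p}\,r/R)^2\,r\,dr=\tfrac{R^2}{2}\,[J_m'(j_{m,p})]^2$, which accounts for the factor $1/(R\,|J_m'(j_{m,p})|)$, so that combining the two gives unit $L^2$ norm.

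Finally, to see that \eqref{eq.2} exhausts the spectrum, I would argue completeness: the trigonometric system $\{1,\cos(m\theta),\sin(m\theta)\}_{m\geq 1}$ is a Hilbert basis of $L^2(\mathbb{S}^1)$, and for each fixed $m$ the Fourier--Bessel functions $\{J_m(j_{m,p}\,r/R)\}_{p\geq 1}$ form an orthogonal basis of the weighted space $L^2((0,R),\,r\,dr)$; their tensor products therefore span $L^2$ of the disk, so no eigenfunction is missed. The main obstacle is not the separation itself, which is routine, but precisely this completeness step together with the Bessel normalization integral: establishing that the Fourier--Bessel system is complete and that the zeros $j_{m,p}$ enumerate all Dirichlet eigenvalues rests on the Sturm--Liouville theory of the singular radial operator, and this is where the genuine work lies.
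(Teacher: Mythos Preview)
Your proposal is correct and follows the classical separation-of-variables route; the paper itself does not give a proof of this theorem but simply refers the reader to \cite{henrot2006}, so in effect you have supplied the argument that the paper defers to an external reference. The steps you outline --- angular periodicity forcing $\mu=m^2$, the Bessel equation for the radial part with the singular solution $Y_m$ discarded by regularity at the origin, the Dirichlet condition selecting the zeros $j_{m,p}$, the Lommel-type normalization integral $\int_0^R J_m(j_{m,p}r/R)^2\,r\,dr=\tfrac{R^2}{2}[J_m'(j_{m,p})]^2$, and the Fourier--Bessel completeness argument --- are exactly the standard ones found in the reference cited, so there is no discrepancy to report beyond the fact that you have written out what the paper leaves implicit.
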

\begin{proof}
  See \cite{henrot2006}.
\end{proof}

We consider the deformation of the unit disk given in section \ref{sec:bodies}. For a given $\varepsilon \ge 0$, $\partial \Omega_{\varepsilon}$ is described by the following parametrization
\begin{equation}\label{eq.R}
R(\theta,\varepsilon)=1+\varepsilon \Bigg(\sum_{n=-\infty}^{\infty}a_{n}e^{in\theta}\Bigg)+ \varepsilon^{2}\Bigg(\sum_{n=-\infty}^{\infty}b_{n}e^{in\theta}-\frac{1}{2}\Bigg((\sum_{n=-\infty}^{\infty}a_{n}^{'}e^{in\theta})\Bigg)^{2}\Bigg)+O(\varepsilon^{3})
\end{equation}
Where $a_{n}^{'}=ina_{n}$, $a_{-n}=\overline{a_{n}}$ and $b_{-n}=\overline{b_{n}}$ for all $n$.

Let us consider an eigenvalue $\lambda$ of the disk from theorem \ref{theo9}. We know that there exist $m\geq 0$ and $p>0$ such that $\lambda=j_{m,p}^{2}$. So let as fix them. Now, part VII 6.5 of \cite{kato2013}, pp. 423-426, gives us an expression of the eigenvalues and eigenfunctions of the Laplacian on the new domains. For some more details we can also refer to the \cite{micheletti1972},pp. 155-160 and to \cite{nagy1946} for details on the theorem used in \cite{micheletti1972}. For the following and for the simplicity, let us denoted $\lambda(\Omega_{\varepsilon})=\omega^{2}$ and $u(r,\theta,\varepsilon)$ an associated eigenfunction. Note that even if it is not explicit, they depends on $m$ and $p$. Since the eigenfunctions given in (\ref{eq.2}) define a basis and since $J_{-n}=(-1)^{n}J_{n}$, $\forall n$, let us write
\begin{equation}\label{eq.3}
u(r,\theta,\varepsilon)=\sum_{-\infty}^{+\infty} A_{n}(\varepsilon)J_{n}(\omega r)e^{in\theta},\;\mbox{ such that }A_{-n}=(-1)^{n}\bar{A}_{n}
\end{equation}
and $A_{n}(\varepsilon)=\delta_{|n|m}\alpha_{n}+\varepsilon \beta_{n}+\varepsilon^{2}\gamma_{n}+O(\varepsilon^{3})$ where $\delta_{|n|m}$ is the Kronecker symbol.
We deduce that $\displaystyle \alpha_{-n}=(-1)^{n}\bar{\alpha}_{n},$ $\displaystyle\beta_{-n}=(-1)^{n}\bar{\beta}_{n}$ and $\displaystyle\gamma_{-n}=(-1)^{n}\bar{\gamma}_{n}$.

For $m\neq 0$, we have $\alpha_{m}\neq 0$ and
\begin{equation}\label{eq.5}
u(r,\theta,0)=\Bigg(\alpha_{m}e^{im\theta}+\overline{\alpha_{m}e^{im\theta}}\Bigg)J_{m}(\omega_0 r),
\end{equation}
$u(r,\theta,0)$ being an eigenfunction on the disk associated with $\lambda (\Omega_{0})=j_{m,p}^{2}$. Thus, if we choose $\alpha_{m}=1$, the eigenfunction associated to $j_{m,p}^{2}$ is $u_{m,p}=2J_{m}(j_{m,p}r)\cos(m\theta)$ and if we choose $\alpha_{m}=i$, $u_{m,p}=-2J_{m}(j_{m,p}r)\sin(m\theta)$.\\

A result of \cite{nagy1946}  shows that $\omega$ can be written as:
\begin{equation}\label{eq.6}
\omega=\omega_{0}+\varepsilon\omega_{1}+\varepsilon^{2} \omega_{2}+O(\varepsilon^{3}).
\end{equation}

Since $\lambda (\Omega_{\varepsilon})=\omega^{2}$, we obtain
\begin{equation}\label{eq.77}
\lambda(\Omega_{\varepsilon})=\omega_{0}^{2}+2\varepsilon \omega_{0}\omega_{1}+\varepsilon^{2}(2\omega_{0}\omega_{2}+\omega_{1}^{2})+O(\varepsilon^{3}).
\end{equation}

The Dirichlet boundary condition becomes
\begin{equation}\label{eq.8}
u(R(\theta,\varepsilon),\theta,\varepsilon)=\sum_{-\infty}^{+\infty}A_{n}(\varepsilon) J_{n}(\omega R(\theta,\varepsilon))e^{in\theta}=0.
\end{equation}

We remark that $\omega R=\omega_{0}+(\omega-\omega_{0})+\omega(R-1)$. Using this in (\ref{eq.8}) and expanding $J_{n}$ in a Taylor series, we obtain
\begin{eqnarray*}
0&=&\displaystyle \sum_{-\infty}^{+\infty}A_{n}(\varepsilon)\Bigg[J_{n}(\omega_{0})+J_{n}^{'}(\omega_{0})(\omega-\omega_{0}+\omega(R-1))\\
&&\qquad\qquad\qquad+\frac{1}{2}J_{n}^{''}(\omega_{0})(\omega-\omega_{0}+\omega(R-1))^{2}+O(\varepsilon^{3})\Bigg]e^{in\theta}.
\end{eqnarray*}

Using (\ref{eq.R}) for $R$, (\ref{eq.3}) for $A_{n}$ and (\ref{eq.6}) for $\omega$ in the previous equality, we obtain the equation
\begin{equation*}
\begin{array}{ll}
0= \sum_{-\infty}^{+\infty} (\delta_{|n|,m}\alpha_{n}+\varepsilon \beta_{n}+\varepsilon^{2}\gamma_{n})\Bigg(J_{n}(\omega_{0})\\
+J_{n}^{'}(\omega_{0})\Bigg[\varepsilon \omega_{1}+\varepsilon^{2} \omega_{2}+(\omega_{0}+\varepsilon \omega_{1}+\varepsilon^{2} \omega_{2})\Bigg(\varepsilon f(\theta)+\varepsilon^{2}(g(\theta)-\frac{1}{2}f^{'2}(\theta))\Bigg)\Bigg]\\
+\frac{1}{2}J_{n}^{''}(\omega_{0})\Bigg[\varepsilon \omega_{1}+\varepsilon^{2} \omega_{2}+(\omega_{0}+\varepsilon \omega_{1}+\varepsilon^2\omega_2)\Bigg(\varepsilon f(\theta)+\varepsilon^{2}(g(\theta)-\frac{1}{2}f^{'2}(\theta))\Bigg)\Bigg]^{2} \Bigg)e^{in\theta}+O(\varepsilon^{3})
\end{array}
\end{equation*}
Therefore,
\begin{eqnarray*}
  0 &=& \sum_{n}\delta_{|n|,m}\alpha_{n}J_{n}(\omega_{0})e^{in\theta} \\
    &+& \varepsilon \sum_{n}\Bigg(\beta_{n}J_{n}(\omega_{0})+\delta_{|n|,m}\alpha_{n}J_{n}^{'}(\omega_{0})\Bigg[\omega_{1}+\omega_{0}\sum_{l}a_{l}e^{il\theta}\Bigg]\Bigg)e^{in\theta} \\
    &+& \varepsilon^{2}\sum_{n}\Bigg(\gamma_{n}J_{n}(\omega_{0})+\beta_{n}J_{n}^{'}(\omega_{0})\Bigg[\omega_{1}+\omega_{0}\sum_{l}a_{l}e^{il\theta}\Bigg] \\
    && +\delta_{|n|,m}\alpha_{n}\Bigg[J_{n}^{'}(\omega_{0})\Bigg(\omega_{2}+\omega_{1}\sum_{l}a_{l}e^{il\theta}+\omega_{0}\Bigg(\sum_{l}b_{l}e^{il\theta}-\frac{1}{2}(\sum_{l}a_{l}e^{il\theta})^{'2}\Bigg)\Bigg)\\
    &+&  \frac{1}{2}J_{n}^{''}(\omega_{0})\Bigg[\omega_{1}^{2}+2\omega_{0}\omega_{1}\sum_{l}a_{l}e^{il\theta}+\omega_{0}^{2}(\sum_{l}a_{l}e^{il\theta})^{2}\Bigg]\Bigg]\Bigg)e^{in\theta}+O(\varepsilon^{3}).
\end{eqnarray*}
\newcommand{\Jn}{J_{n}(\omega_{0})}

This equation holds true if and only if the coefficients ahead of $\displaystyle\varepsilon^{j}, j=0,1,2 $ are all equal to zero.

We now have to separate the cases $m=0$, i.e. simple eigenvalues of the disk, and $m>0$, double eigenvalues, and we express $\omega_0$, $\omega_1$ and $\omega_2$ in terms of $(a_n)$, $(b_n)$, $(\alpha_n)$, $(\beta_n)\, \cdots$.

\subsection{Case $m=0$: simple eigenvalues}
We give the expression of simple eigenvalue of Dirichlet Laplacian among sets of constant width near to the disk, by assuming that $a_{l}=0$ for $l$ odd.
\begin{lemma}\label{Simple}
With the previous notations, if $\lambda(\Omega_{0})=j_{0,p}^{2}$ then
\begin{equation}\label{eq.R1}
  \lambda(\Omega_{\varepsilon})= j_{0,p}^{2}\Bigg(1+4\varepsilon^{2}\sum_{l\in \mathbb{N}^{*}}\Bigg(\frac{1}{2}+\frac{1}{2} l^{2}+j_{0,p}\frac{J_{l}^{'}(j_{0,p})}{J_{l}(j_{0,p})}\Bigg)|a_{l}|^{2}+O(\varepsilon^{3})\Bigg)
\end{equation}

\end{lemma}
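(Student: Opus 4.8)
The plan is to exploit the expansion of the Dirichlet boundary condition derived just above the statement, in which the coefficient of each power $\varepsilon^{j}$ and of each angular mode $e^{ik\theta}$ must vanish separately. Throughout I normalise the unperturbed eigenfunction by taking $\alpha_{0}=1$, and I use the standing constant-width hypotheses of Lemma \ref{zak1}, which in particular give $a_{0}=b_{0}=0$. The goal is to determine $\omega_{0}$, $\omega_{1}$, $\omega_{2}$ in (\ref{eq.6}) and then substitute into (\ref{eq.77}); since it will turn out that $\omega_{1}=0$, only $\omega_{0}^{2}+2\varepsilon^{2}\omega_{0}\omega_{2}$ survives to second order.

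First I would read off the order $\varepsilon^{0}$ equation: for $m=0$ it reduces to $\alpha_{0}J_{0}(\omega_{0})=0$, so $J_{0}(\omega_{0})=0$ and hence $\omega_{0}=j_{0,p}$. At order $\varepsilon^{1}$ I project onto the angular modes. The constant mode $e^{i0\theta}$ gives $\beta_{0}J_{0}(\omega_{0})+\alpha_{0}J_{0}'(\omega_{0})(\omega_{1}+\omega_{0}a_{0})=0$; since $J_{0}(\omega_{0})=0$, $a_{0}=0$ and $J_{0}'(j_{0,p})\neq 0$, this forces $\omega_{1}=0$. For $k\neq 0$ the mode $e^{ik\theta}$ gives $\beta_{k}J_{k}(\omega_{0})+\alpha_{0}\omega_{0}J_{0}'(\omega_{0})a_{k}=0$, that is,
\[
\beta_{k}=-\,\frac{j_{0,p}J_{0}'(j_{0,p})}{J_{k}(j_{0,p})}\,a_{k},\qquad k\neq 0 .
\]
The free parameter $\beta_{0}$ is never needed, as it always multiplies $J_{0}(\omega_{0})=0$ or $a_{0}=0$.

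The heart of the computation is the order $\varepsilon^{2}$ equation projected onto the constant mode $e^{i0\theta}$, which yields $\omega_{2}$. Here $\omega_{1}=0$ simplifies matters, and the term $\gamma_{0}J_{0}(\omega_{0})$ drops because $J_{0}(\omega_{0})=0$, so I never have to solve for the $\gamma_{n}$. The surviving constant-mode contributions are of two kinds. From $\sum_{n}\beta_{n}J_{n}'(\omega_{0})\omega_{0}\sum_{l}a_{l}e^{il\theta}$ the pairing $n+l=0$ selects $\sum_{n}\beta_{n}J_{n}'(\omega_{0})\omega_{0}a_{-n}$; substituting $\beta_{n}$ from the previous step and using $a_{-n}=\overline{a_{n}}$ together with the evenness in $n$ of $J_{n}'(j_{0,p})/J_{n}(j_{0,p})$ and of $|a_{n}|^{2}$ (so that $\sum_{n\neq 0}=2\sum_{l\in\mathbb{N}^{*}}$), this kind contributes $-2j_{0,p}^{2}J_{0}'(j_{0,p})\sum_{l}\tfrac{J_{l}'(j_{0,p})}{J_{l}(j_{0,p})}|a_{l}|^{2}$. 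From the $\delta_{|n|,0}$ bracket the relevant pieces involve $\sum_{l}b_{l}e^{il\theta}$, $(\sum_{l}a_{l}e^{il\theta})'^{2}$ and $(\sum_{l}a_{l}e^{il\theta})^{2}$: here $b_{0}=0$ kills the $b$-term, the constant coefficient of $(\sum_{l}a_{l}e^{il\theta})'^{2}$ is $\sum_{l}l^{2}|a_{l}|^{2}$ and that of $(\sum_{l}a_{l}e^{il\theta})^{2}$ is $\sum_{l}|a_{l}|^{2}$.

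The one genuinely analytic input is the Bessel differential equation: since $J_{0}(j_{0,p})=0$, the relation $J_{0}''(x)+\tfrac1x J_{0}'(x)+J_{0}(x)=0$ at $x=j_{0,p}$ gives $J_{0}''(j_{0,p})=-\tfrac{1}{j_{0,p}}J_{0}'(j_{0,p})$, which lets me eliminate $J_{0}''$ from the $\tfrac12 J_{0}''(\omega_{0})\omega_{0}^{2}(\sum_{l}a_{l}e^{il\theta})^{2}$ term. Collecting everything, dividing by the nonzero factor $J_{0}'(j_{0,p})$, and solving the constant-mode equation for $\omega_{2}$ gives
\[
\omega_{2}=j_{0,p}\sum_{l\in\mathbb{N}^{*}}\Bigl(1+l^{2}+2j_{0,p}\tfrac{J_{l}'(j_{0,p})}{J_{l}(j_{0,p})}\Bigr)|a_{l}|^{2}.
\]
Substituting $\omega_{1}=0$ and this $\omega_{2}$ into (\ref{eq.77}) and factoring out $j_{0,p}^{2}$ produces exactly (\ref{eq.R1}). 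The main obstacle is purely organisational: correctly isolating the constant Fourier coefficient at order $\varepsilon^{2}$, tracking which cross terms ($n+l=0$) survive, and verifying that $\beta_{0}$ and $\gamma_{0}$ never contribute; once the Bessel recurrence is invoked the remaining algebra is routine.
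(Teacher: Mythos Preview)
Your proposal is correct and follows essentially the same route as the paper: both identify $\omega_{0}=j_{0,p}$ and $\omega_{1}=0$ from the $\varepsilon^{0}$ and $\varepsilon^{1}$ equations, solve for $\beta_{k}$ in the nonconstant modes, then extract $\omega_{2}$ from the constant Fourier mode at order $\varepsilon^{2}$ using $J_{0}''(j_{0,p})=-J_{0}'(j_{0,p})/j_{0,p}$ and the constant-width constraints $a_{0}=b_{0}=0$. Your resulting expression for $\omega_{2}$ coincides with the paper's, and the substitution into (\ref{eq.77}) yields (\ref{eq.R1}) in the same way.
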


\begin{proof}

\textbf{Term in $\varepsilon^{0}$}\\
$0=\alpha_{0}J_{0}(\omega_{0})$, and as $\alpha_{0}\neq 0$ thus $\omega_{0}=j_{0,p}$.

\textbf{Term in $\varepsilon^{1}$}
\begin{eqnarray*}
0&=&\alpha_{0}J_{0}^{'}(\omega_{0})\Bigg[\omega_{1}+\omega_{0}\Bigg(\sum_{l}a_{l}e^{il\theta}\Bigg)\Bigg]+\sum_{n\neq0}\beta_{n}J_{n}(\omega_{0})e^{in\theta}\\
&=&\alpha_{0}\omega_{1}J_{0}^{'}(\omega_{0})+\sum_{n\neq0}(\alpha_{0}J_{0}^{'}(\omega_{0})\omega_{0}a_{n}+\beta_{n}J_{n}(\omega_{0}))e^{in\theta}.
\end{eqnarray*}

Thus $\alpha_{0}J_{0}^{'}(\omega_{0})\omega_{0}a_{n}+\beta_{n}J_{n}(\omega_{0})=0$ for $n\neq 0$, and $\alpha_{0}\omega_{1}J_{0}^{'}(\omega_{0})=0$, so
$ \beta_{n}=-\alpha_{0}\frac{J_{0}^{'}(\omega_{0})}{J_{n}(\omega_{0})}\omega_{0}a_{n},\mbox{ for } n\neq 0$
and $\omega_{1}=0$.

\textbf{Term in $\varepsilon^{2}$}\\
Using $\omega_{1}=0$ and $J_{0}(\omega_{0})=0$
\begin{eqnarray*}
  0 &=& \sum_{n\neq 0} \Bigg(\gamma_{n}J_{n}(\omega_{0})+ \beta_{n}J_{n}^{'}(\omega_{0})\omega_{0}\sum_{l}a_{l}e^{il\theta}\Bigg)e^{in\theta}+\gamma_{0}J_{0}(\omega_{0})+ \beta_{0}J_{0}^{'}(\omega_{0})\omega_{0}\sum_{l}a_{l}e^{il\theta}\\
    &+&  \alpha_{0}\Bigg[J_{0}^{'}(\omega_{0})\Bigg(\omega_{2}+\omega_{0}\Bigg(\sum_{l}b_{l}e^{il\theta}-\frac{1}{2}\Bigg(\Bigg(\sum_{l}a_{l}e^{il\theta}\Bigg)^{'}\Bigg)^{2}\Bigg)\Bigg)\\
    &+&  \frac{1}{2}J_{n}^{''}(\omega_{0})\omega_{0}^{2}\sum_{l,k}a_{l}a_{n}e^{i(l+k)\theta}\Bigg]\\
    &=& \sum_{n\neq 0}\Bigg(\gamma_{n}J_{n}(\omega_{0})+\beta_{n}J_{n}^{'}(\omega_{0})\omega_{0}\sum_{l}a_{l}e^{il\theta}\Bigg)e^{in\theta}+\beta_{0}J_{0}^{'}(\omega_{0})\omega_{0}\sum_{l}a_{l}e^{il\theta}\\
    &+&\alpha_{0}\Bigg[J_{0}^{'}(\omega_{0})\Bigg(\omega_{2}+\omega_{0}\Bigg(\sum_{l}b_{l}e^{il\theta}-\frac{1}{2}\sum_{l,k}a_{l}^{'}a_{k}^{'}e^{i(l+k)\theta}\Bigg)\Bigg)\\
    &+&\frac{1}{2}J_{0}^{''}(\omega_{0})\omega_{0}^{2}(\sum_{l,k}a_{l}a_{k}e^{i(l+k)\theta})\Bigg].
\end{eqnarray*}

Then, by assumption $1$:

$$\sum_{n\neq0}\beta_{n}J_{n}^{'}(\omega_{0})\omega_{0}a_{-n}+\underbrace{\beta_{0}J_{0}^{'}(\omega_{0})\omega_{0}a_{0}}_{=0}+\alpha_{0}J_{0}^{'}(\omega_{0})\omega_{2}+\underbrace{\alpha_{0}b_{0}\omega_{0}J_{0}^{'}(\omega_{0})}_{=0}$$
$$-\frac{1}{2}\alpha_{0}J_{0}^{'}(\omega_{0})\omega_{0}\sum_{l}a_{l}^{'}a_{-l}^{'} + \frac{1}{2}\alpha_{0}\omega_{0}^{2}J_{0}^{''}(\omega_{0})\sum_{l} a_{l} a_{-l}=0.$$

Therefore,
\begin{eqnarray*}
  \omega_{2} &=& \frac{1}{2}\omega_{0}\sum_{l}|a_{l}|^{2} -\frac{1}{2}\omega_{0}^{2} \frac{J_{0}^{''}(\omega_{0})}{J_{0}^{'}(\omega_{0})}\sum_{l}|a_{l}^{'}|^{2} - \sum_{n\neq 0}\frac{\beta_{n}}{\alpha_{0}}a_{-n} \frac{J_{n}^{'}(\omega_{0})}{J_{0}^{'}(\omega_{0})}\omega_{0}.
\end{eqnarray*}

Using $\beta_{n}=-\alpha_{0}\frac{J_{0}^{'}(\omega_{0})}{J_{n}(\omega_{0})}\omega_{0}a_{n}$ and $J_{0}^{''}(\omega_{0})=\frac{-J_{0}^{'}(\omega_{0})}{\omega_{0}}$, we obtain

\begin{eqnarray*}
 \omega_{2}  &=& \frac{1}{2}\omega_{0}\sum_{l}|a_{l}^{'}|^{2}+\frac{1}{2}\omega_{0} \sum_{l}|a_{l}^{'}|^{2}+\sum_{n\neq 0}|a_{n}|^{2} \frac{J_{n}^{'}(\omega_{0})}{J_{n}(\omega_{0})}\omega_{0}^{2} \\
             &=&\omega_{0}\sum_{n\neq 0}\Bigg(\frac{1}{2}n^{2}+\omega_{0}\frac{J_{n}^{'}(\omega_{0})}{J_{n}(\omega_{0})}\Bigg)|a_{n}|^{2} \\
             &=& 2 \omega_{0}\sum_{n>0} \Bigg(\frac{1}{2}+\frac{1}{2}n^{2}+\omega_{0}\frac{J_{n}^{'}(\omega_{0})}{J_{n}(\omega_{0})}\Bigg)|a_{n}|^{2}.
\end{eqnarray*}
In conclusion, replacing $\omega_{0}$, $\omega_{1}$ and $\omega_{2}$ by these values in (\ref{eq.77}) we deduce (\ref{eq.R1}).
\end{proof}

\subsection{Case $m>0$: double eigenvalues}
We give the expression of double eigenvalue of Dirichlet Laplacian among set of constant width near to the disk, by assuming that $a_{l}=0$ for $l$ odd.
\begin{lemma}\label{Double}
With previous notations, if $\lambda (\Omega_{0})=j_{m,p}^{2}$, for $m>0$, then
\begin{eqnarray*}
\lambda (\Omega_{\varepsilon})   &=& j_{m,p}^{2}\Bigg[1+2\varepsilon^{2}\Bigg(\sum_{|l|\neq m}\Bigg(\frac{1}{2}+\frac{1}{2}(m-l)^{2}+j_{m,p}\frac{J_{l}^{'}(j_{m,p})}{J_{l}(j_{m,p})}\Bigg)|a_{m-l}|^{2} \\
                                 && \quad\quad + \frac{\alpha_{m}}{\overline{\alpha_{m}}}\Bigg(\sum_{l\neq m}\frac{1}{2}-\frac{1}{2}(m^{2}-l^{2})+j_{m,p}\frac{J_{l}^{'}(j_{m,p})}{J_{l}(j_{m,p})}\Bigg)a_{m+l}a_{m-l}\Bigg)+O(\varepsilon^{3})\Bigg]\\
                                 &=& j_{m,p}^{2}\Bigg[1+2\varepsilon^{2}\Bigg(\sum_{|l|\neq m}\Bigg(\frac{1}{2}+\frac{1}{2}(m-l)^{2}+j_{m,p}\frac{J_{l}^{'}(j_{m,p})}{J_{l}(j_{m,p})}\Bigg)|a_{m-l}|^{2} \\
                                 &&\quad\quad \pm \Bigg|\Bigg(\sum_{l\neq m}\frac{1}{2}-\frac{1}{2}(m^{2}-l^{2})+j_{m,p}\frac{J_{l}^{'}(j_{m,p})}{J_{l}(j_{m,p})}\Bigg)a_{m+l}a_{m-l}\Bigg|\Bigg)+O(\varepsilon^{3})\Bigg].
\end{eqnarray*}
\end{lemma}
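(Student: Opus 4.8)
The plan is to repeat, for $m>0$, the order-by-order identification of the coefficients of $\varepsilon^0,\varepsilon^1,\varepsilon^2$ carried out in the proof of Lemma \ref{Simple}, the one genuinely new feature being that the unperturbed eigenvalue is double: at order $\varepsilon^0$ the kernel is spanned by the two modes $e^{im\theta}$ and $e^{-im\theta}$. Consequently $\omega_2$ will not be read off from a single scalar equation but obtained as an eigenvalue of a $2\times2$ matrix, and it is the two eigenvalues of that matrix that produce the two branches $\pm$ in the statement.

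First I would collect the coefficient of $\varepsilon^0$ in the boundary equation preceding this subsection: $\sum_n\delta_{|n|,m}\alpha_nJ_n(\omega_0)e^{in\theta}=0$ forces $J_m(\omega_0)=0$, i.e. $\omega_0=j_{m,p}$, since $\alpha_m\neq0$ and $J_{-m}=(-1)^mJ_m$. At order $\varepsilon^1$ I would collect the coefficient of each $e^{ik\theta}$. Because the even Fourier coefficients of $f$ vanish (Assumption \ref{asumption}, so in particular $a_0=a_{2m}=0$), the two resonant projections $k=\pm m$ reduce to $\alpha_{\pm m}J_{\pm m}'(\omega_0)\omega_1=0$, hence $\omega_1=0$; the non-resonant projections $k\neq\pm m$ (where $J_k(\omega_0)\neq0$) solve for
\[
\beta_k=-\frac{\omega_0}{J_k(\omega_0)}\Big(\alpha_mJ_m'(\omega_0)\,a_{k-m}+\alpha_{-m}J_{-m}'(\omega_0)\,a_{k+m}\Big),
\]
while the amplitudes $\beta_{\pm m}$ remain free kernel directions which, exactly as in Lemma \ref{Simple}, drop out of the next order because $a_0=a_{2m}=0$.

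The core of the argument is the coefficient of $\varepsilon^2$. Using $\omega_1=0$, $J_m(\omega_0)=0$, the Bessel relation $J_m''(\omega_0)=-J_m'(\omega_0)/\omega_0$, and the vanishing of the even coefficients $b_0=b_{2m}=0$ (which removes all contributions of the second order data $b_l$, as in Lemma \ref{Simple}), I would substitute the $\beta_k$ above and project the order-two coefficient onto the two resonant modes $e^{\pm im\theta}$. With $a_{-j}=\overline{a_j}$ one recognises the diagonal products $a_{k-m}a_{m-k}=|a_{k-m}|^2$ and, after the index shift $k\mapsto m\mp l$, the two projections become a homogeneous linear system
\[
\big(\omega_2-\omega_0 S_1\big)\alpha_{\pm m}=\omega_0(-1)^m S_2^{\pm}\,\alpha_{\mp m},
\]
where $S_1$ is the real sum appearing in the statement and $S_2^{+}=S_2$, $S_2^{-}=\overline{S_2}$ are the off-diagonal couplings built from the products $a_{m+l}a_{m-l}$ (whose two indices sum to $2m$), carrying precisely the weights $\tfrac12-\tfrac12(m^2-l^2)+j_{m,p}J_l'(j_{m,p})/J_l(j_{m,p})$.

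Finally I would impose solvability: since $(\alpha_m,\alpha_{-m})\neq(0,0)$, the determinant of the above $2\times2$ system must vanish, giving $(\omega_2-\omega_0S_1)^2=\omega_0^2\,S_2\overline{S_2}=\omega_0^2|S_2|^2$, hence $\omega_2=\omega_0\big(S_1\pm|S_2|\big)$; equivalently the reality constraint $\alpha_{-m}=(-1)^m\overline{\alpha_m}$ fixes the phase of $\alpha_m$ so that $\tfrac{\alpha_m}{\overline{\alpha_m}}S_2=\pm|S_2|$, which is how the first displayed form of the statement turns into the second. Substituting $\omega_0=j_{m,p}$, $\omega_1=0$ and this value of $\omega_2$ into (\ref{eq.77}) yields the claimed expansion. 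The main obstacle is purely organisational: keeping track of the double convolution in the order-two coefficient and verifying that the off-diagonal projection reassembles, after the shift $l\mapsto m\pm l$ and the Bessel identity, into exactly the sum $S_2$ — together with the observation that degeneracy turns the scalar computation of Lemma \ref{Simple} into a $2\times2$ eigenvalue problem whose spectrum is $S_1\pm|S_2|$.
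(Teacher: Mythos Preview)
Your proposal is correct and follows essentially the same route as the paper: identify $\omega_0=j_{m,p}$ at order $\varepsilon^0$, use $a_0=a_{2m}=0$ to get $\omega_1=0$ and solve for $\beta_k$ ($|k|\neq m$) at order $\varepsilon^1$, then project the order-$\varepsilon^2$ relation onto the two resonant modes $e^{\pm im\theta}$. The only difference is organisational: you package the two resonant equations as a $2\times2$ solvability/eigenvalue condition whose determinant yields $\omega_2=\omega_0(S_1\pm|S_2|)$, whereas the paper solves each resonant projection separately for $\omega_2/\omega_0=\Gamma+\Upsilon$ and $\omega_2/\omega_0=\Gamma+\overline{\Upsilon}$ and infers from their consistency that $\Upsilon\in\mathbb{R}$, hence $\Upsilon=\pm|\Upsilon|$; the two presentations are equivalent.
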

\begin{proof}
\textbf{Term in $\varepsilon^{0}$}\\
As $\alpha_{-m}=(-1)^m\overline{\alpha_m}\not=0$ and $J_{-m}=(-1)^mJ_m$ then
$$
\displaystyle\alpha_{m}J_{m}(\omega_{0})e^{im\theta}+\alpha_{-m}J_{-m}(\omega_{0})e^{-im\theta}=2Re\Bigg(\alpha_{m}e^{im\theta}\Bigg)J_{m}(\omega_{0})=0,
$$
so $J_{m}(\omega_{0})=0$ implies $\omega_{0}=j_{m,p}$.

\textbf{Term in $\varepsilon^{1}$}
%

\begin{eqnarray*}
\displaystyle0   &=& \sum_{|n|\neq m}\beta_{n}J_{n}(\omega_{0})e^{in\theta}+ \underbrace{\beta_{m}J_{m}(\omega_{0})e^{im\theta}}_{=0}+\underbrace{\beta_{-m}J_{-m}(\omega_{0})e^{-im\theta}}_{=0}\\
                 && \qquad\qquad\qquad\quad\quad+\Bigg(\alpha_{m} J_{m}^{'}(\omega_{0})e^{im\theta}+\alpha_{-m} J_{-m}^{'}(\omega_{0})e^{-im\theta}\Bigg)\Bigg[\omega_{1}+\omega_{0}\sum_{l}a_{l}e^{il\theta}\Bigg] \\
                 &=& \sum_{|n|\neq m}\beta_{n}J_{n}(\omega_{0})e^{in\theta}+ J_{m}^{'}(\omega_{0})\Bigg(\alpha_{m}e^{im\theta}+\overline{\alpha_{m}}e^{-im\theta}\Bigg)\omega_{1} \\
                 &&  \qquad\qquad\qquad\quad\quad+\omega_{0}J_{m}^{'}(\omega_{0})\Bigg(\alpha_{m}\sum_{l}a_{l}e^{i(l+m)\theta}+\overline{\alpha_{m}}\sum_{l}a_{l}e^{i(l-m)\theta}\Bigg)\\
                 &=& \sum_{|n|\neq m}\Bigg[\beta_{n}J_{n}(\omega_{0})+\Bigg(\alpha_{m}a_{n-m}+ \overline{\alpha_{m}}a_{n+m}\Bigg)\omega_{0}J_{m}^{'}(\omega_{0})\Bigg]e^{in\theta} \\
                 &&\quad+ \Bigg(\alpha_{m}\omega_{1}+\overline{\alpha_{m}}\omega_{0}a_{2m}\Bigg )J_{m}^{'}(\omega_{0})e^{im\theta}+\Bigg(\overline{\alpha_{m}}\omega_{1}+\alpha_{m}\omega_{0}a_{-2m}\Bigg )J_{m}^{'}(\omega_{0})e^{-im\theta}.
\end{eqnarray*}

For $j\neq m$, we have $
\beta_{j}=-\omega_{0}(\alpha_{m}a_{j-m}+\overline{\alpha_{m}}a_{j+m})\frac{J_{m}^{'}(\omega_{0})}{J_{j}(\omega_{0})}.
$

For $j= m$, we have $
(\alpha_{m}\omega_{1}+\overline{\alpha_{m}}\omega_{0}a_{2m})J_{m}^{'}(\omega_{0})=0.
$ Since $a_{2m}=0$, $\alpha_{m}\neq 0$ and $J_{m}^{'}(\omega_{0})\neq 0$, $\omega_{1}=0$.\\

\textbf{Term in $\varepsilon^{2}$}

By using $\omega_{1}=0$:\\
$
0 = \sum_{n}\bigg[\gamma_{n}J_{n}(\omega_{0}) +\beta_{n}J_{n}^{'}(\omega_{0})\omega_{0}(\sum_{l}a_{l}e^{il\theta})\Bigg]e^{in\theta}
$

$$
\quad\quad+\alpha_{m}\Bigg[ J_{m}^{'}(\omega_{0})\Bigg(\omega_{2}+\omega_{0}\Bigg(\sum_{l}b_{l}e^{il\theta}-\frac{1}{2}\Bigg(\Bigg(\sum_{n}a_{n}e^{in\theta}\Bigg)^{'}\Bigg)^{2}\Bigg)\Bigg)
$$
$$
\quad\quad+\frac{1}{2}J_{m}^{''}(\omega_{0})\omega_{0}^{2}\Bigg(\sum_{l}a_{l}e^{il\theta}\Bigg)^{2}\Bigg]e^{im\theta} $$
$$
\quad\quad+\alpha_{-m}\Bigg[ J_{-m}^{'}(\omega_{0})\Bigg(\omega_{2}+\omega_{0} \Bigg(\sum_{l}b_{l}e^{il\theta}-\frac{1}{2}\Bigg(\Bigg(\sum_{n}a_{n}e^{in\theta}\Bigg)^{'}\Bigg)^{2}\Bigg)\Bigg)
$$
$$
\quad\quad+\frac{1}{2}J_{-m}^{''}(\omega_{0})\omega_{0}^{2}\Bigg(\sum_{l}a_{l}e^{il\theta}\Bigg)^{2}\Bigg]e^{-im\theta}.
$$


Since $J_{m}^{''} (\omega_{0})=\frac{-1}{\omega_{0}}J_{m}^{'} (\omega_{0})$,$J_{-m}=(-1)^{m}J_{m}$ and $\alpha_{m}=(-1)^{m}\overline{\alpha_{m}}$, we have
\begin{equation*}
\begin{array}{ll}
0 =\sum\limits_{n}\Bigg(\gamma_{n}J_{n}(\omega_{0}) +\beta_{n}J_{n}^{'}(\omega_{0})\omega_{0}\sum\limits_{l}a_{l}e^{il\theta}\Bigg) e^{in\theta}
\\
\qquad+ \alpha_{m}J_{m}^{'}(\omega_{0})\Bigg(\omega_{2}+\omega_{0}\Bigg(\sum\limits_{l}b_{l}e^{il\theta}-\frac{1}{2} \sum\limits_{n,l}a_{l}^{'}a_{n}^{'}e^{i(l+n)\theta}-\frac{1}{2}\sum\limits_{n,l}a_{n}a_{l}e^{i(l+m)\theta}\Bigg)\Bigg)e^{im\theta}
\\
\qquad+ \overline{\alpha_{m}}J_{m}^{'}(\omega_{0})\Bigg(\omega_{2}+\omega_{0}\Bigg(\sum\limits_{l}b_{l}e^{il\theta}-\frac{1}{2} \sum\limits_{n,l}a_{l}^{'}a_{n}^{'}e^{i(l+n)\theta}-\frac{1}{2}\sum\limits_{n,l}a_{n}a_{l}e^{i(l+m)\theta}\Bigg)\Bigg)e^{-im\theta}.
\end{array}
\end{equation*}

For the $m$-th coefficient, we have $J_{m}(\omega_{0})=0$  and $a_{l}^{'}=i\;l\;a_{l}$, then
\begin{equation*}
\begin{array}{ll}
\displaystyle 0 = \omega_{0}\sum_{l}\beta_{l}a_{m-l}J_{l}^{'}(\omega_{0}) +\alpha_{m}J_{m}^{'}(\omega_{0})\omega_{2}+\underbrace{\alpha_{m}J_{m}^{'}(\omega_{0})\omega_{0}b_{0}}_{=0}
\\
\displaystyle\quad\quad-\frac{1}{2}\omega_{0}\alpha_{m}J_{m}^{'}(\omega_{0})\sum_{l}|a'_{l}|^{2}
-\frac{1}{2}\alpha_{m}J_{m}^{'}(\omega_{0})\omega_{0}\sum_{l}|a_{l}|^{2}
\\
\displaystyle\quad\quad+\underbrace{\overline{\alpha_{m}}J_{m}^{'}(\omega_{0})\omega_{0}b_{2m}}_{=0}-\frac{1}{2}\overline{\alpha_{m}}J_{m}^{'}(\omega_{0})\omega_{0}\sum_{l\in \mathbb{Z}}a_{m+l}^{'}a_{m-l}^{'}-\frac{1}{2}\overline{\alpha_{m}}J_{m}^{'}(\omega_{0})\omega_{0}\sum_{l}a_{m+l}a_{m-l}
\end{array}
\end{equation*}

We have that
\begin{equation*}
\begin{array}{lll}
  \omega_{2}&=&-\frac{\omega_{0}}{\alpha_{m}}\sum_{|l|\neq m}\beta_{l}\frac{J_{l}^{'}(\omega_{0})}{J_{m}(\omega_{0})}a_{m-l}+\underbrace{\beta_{m}J_{m}^{'}(\omega_{0})a_{0}}_{0}+\underbrace{\beta_{-m}J_{-m}^{'}(\omega_{0})a_{2m}}_{0}+\frac{1}{2}\omega_{0}\sum_{l}|a_{l}^{'}|^{2}+\frac{1}{2}\omega_{0}\sum_{l}|a_{l}|^{2}
\\
&&+\frac{\overline{\alpha_{m}}}{\alpha_{m}}\omega_{0}\Bigg(\frac{1}{2}\sum_{l\in \mathbb{Z}}a_{m+l}^{'}a_{m-l}^{'}+\frac{1}{2}\omega_{0}\sum_{l}a_{m+l}a_{m-l}\Bigg).
\end{array}
\end{equation*}

Using the previous expression of $\beta_{j}$,$j\neq m$ and with $a_{0}$, $a_{2m}=0$, we deduce that
\begin{equation*}
  \frac{\omega_{0}}{\alpha_{m}}\sum_{|l|\neq m}\beta_{l}\frac{J_{l}^{'}(\omega_{0})}{J_{m}^{'}(\omega_{0})}a_{m-l} = -\omega_{0}^{2}\sum_{l\neq m}a_{l-m}a_{m-l}\frac{J_{l}^{'}(\omega_{0})}{J_{l}(\omega_{0}}                                                                                                                    -\omega_{0}^{2}\frac{\overline{\alpha_{m}}}{\alpha_{m}}\sum_{l\neq m}a_{m+l}a_{m-l}\frac{J_{l}^{'}(\omega_{0})}{J_{l}(\omega_{0}}.
\end{equation*}

Thus,
\begin{eqnarray*}
  \omega_{2}                    &=& \omega_{0}^{2}\sum_{l}\underbrace{a_{l-m}a_{m-l}}_{=|a_{m-l}|^{2}}\frac{J_{l}^{'}(\omega_{0})}{J_{l}(\omega_{0})}+\omega_{0}^{2}\frac{\overline{\alpha_{m}}}{\alpha_{m}}\sum_{l}a_{l+m}a_{m-l}\frac{J_{l}^{'}(\omega_{0})}{J_{l}(\omega_{0})} \\
                                &&  +\frac{1}{2}\omega_{0}\sum_{l}|a_{l}^{'}|^{2}+\frac{1}{2}\omega_{0}\sum_{l}|a_{l}|^{2}+\frac{\overline{\alpha_{m}}}{\alpha_{m}}\omega_{0}\Bigg[\frac{1}{2}\sum_{l}a_{m+l}^{'}a_{m-l}^{'}+\frac{1}{2}\sum_{|l|\neq m}a_{m-l}a_{m+l}\Bigg]
\end{eqnarray*}
so,
\begin{eqnarray*}
  \frac{\omega_{2}}{\omega_{0}} &=& \frac{1}{2}\sum_{l} |a_{l}|^{2}+\frac{1}{2}\sum_{l}|a_{l}^{'}|^{2}+\omega_{0}\sum_{|l|\neq m}|a_{m-l}|^{2}\frac{J_{l}^{'}(\omega_{0})}{J_{l}(\omega_{0}}\\
                                && + \frac{\overline{\alpha_{m}}}{\alpha_{m}}\Bigg( \frac{1}{2}\sum_{|l|\neq m}a_{m-l}a_{m+l}+\frac{1}{2}\sum_{|l|\neq m}a_{m-l}^{'}a_{m+l}^{'}+\omega_{0}\sum_{|l|\neq m}a_{l+m}a_{m-l}\frac{J_{l}^{'}(\omega_{0})}{J_{l}(\omega_{0}}\Bigg).
\end{eqnarray*}

We have that
$
\displaystyle\sum_{l\in \mathbb{Z}}|a_{l}|^{2}=\underbrace{|a_{0}|^{2}}_{=0}+\underbrace{|a_{2m}|^{2}}_{=0}+\sum_{l\neq m}|a_{m-l}|^{2}$,  $\displaystyle\sum_{l\in \mathbb{Z}}|a_{l}^{'}|^{2}=\underbrace{|a_{0}^{'}|^{2}}_{=0}+\underbrace{|a_{2m}^{'}|^{2}}_{=0}+\sum_{l\neq m}|a_{m-l}^{'}|^{2}$, $a_{-n}^{'}=-i n a_{-n}$ and $a_{-n}^{'}a_{n}^{'}=n^{2}a_{-n}a_{n}=n^{2}|a_{n}|^{2}$.

Finally we have:
\begin{equation*}
\begin{array}{lll}
\displaystyle\frac{\omega_{2}}{\omega_{0}}&=&\underbrace{\sum_{|l|\neq m}\Bigg(\frac{1}{2}+\frac{1}{2}(m-l)^{2}+\omega_{0}\frac{J_{l}^{'}(\omega_{0})}{J_{l}(\omega_{0})}\Bigg)|a_{m-l}|^{2}}_{\Gamma}
\\
&&+\underbrace{\frac{\alpha_{m}}{\overline{\alpha_{m}}}\Bigg(\sum_{l\neq m}\frac{1}{2}-\frac{1}{2}(m^{2}-l^{2})+\omega_{0}\frac{J_{l}^{'}(\omega_{0})}{J_{l}(\omega_{0})}\Bigg)a_{m+l}a_{m-l}}_{\Upsilon}.
\end{array}
\end{equation*}
Similarly for $-m$,
\begin{eqnarray*}
  0 &=& \omega_{0}\sum_{l\in \mathbb{Z}}\beta_{l}J_{l}^{'}(\omega_{0})a_{-m-l}+\overline{\alpha_{m}}J_{m}^{'}(\omega_{0})\Bigg(\omega_{2}+\omega_{0}b_{0}-\frac{1}{2}\omega_{0}\sum_{l}|a_{l}^{'}|^{2}-\frac{\omega_{0}}{2}\sum_{l}|a_{l}|^{2}\Bigg)\\
    &&\quad\quad+\alpha_{m}J_{m}^{'}(\omega_{0})\Bigg(\omega_{0}b_{-2m}-\frac{1}{2}\omega_{0}\sum_{l\in \mathbb{Z}}a_{-m+l}^{'}a_{-m-l}^{'}-\frac{\omega_{0}}{2}\sum_{l\in \mathbb{Z}}a_{-m+l}a_{-m-l}\Bigg)\\
    &=& \omega_{0}\sum_{l\in \mathbb{Z}}\beta_{l}J_{l}^{'}(\omega_{0})a_{-m-l}+\overline{\alpha_{m}}J_{m}^{'}(\omega_{0})\omega_{2}+\overline{\alpha_{m}}J_{m}^{'}(\omega_{0})\omega_{0}\Bigg(-\frac{1}{2}\sum_{l}|a_{l}^{'}|^{2}-\frac{1}{2}\sum_{l}|a_{l}|^{2}\Bigg) \\
    &&\quad\quad+\alpha_{m}J_{m}^{'}(\omega_{0})\omega_{0}\Bigg(-\frac{1}{2}\sum_{l}a_{-m+l}^{'}a_{-m-l}^{'}-\frac{1}{2}\sum_{l}a_{-m+l}a_{-m-l}\Bigg).
  \end{eqnarray*}

By using the previous expression of $\beta_{l}$, we deduce
\begin{eqnarray*}
 \frac{\omega_{2}}{\omega_{0}}  &=& \frac{1}{2}\sum_{l\in \mathbb{Z}}|a_{l}|^{2}+\frac{1}{2}\sum_{l\in \mathbb{Z}}|a_{l}^{'}|^{2}+\omega_{0}\Bigg(\sum_{l\neq m}\frac{\alpha_{m}}{\overline{\alpha_{m}}}a_{l-m}a_{-m-l}\frac{J_{l}^{'}(\omega_{0})}{J_{l}(\omega_{0})}+a_{l+m}a_{-m-l}\frac{J_{l}^{'}(\omega_{0})}{J_{l}(\omega_{0})}\Bigg)  \\
                                &&\quad\quad+\frac{\alpha_{m}}{\overline{\alpha_{m}}}\Bigg(\frac{1}{2}\sum_{l}a_{-m+l}a_{-m-l}+ \frac{1}{2}\sum_{l}a_{-m+l}^{'}a_{-m-l}^{'}\Bigg)\\
                                &=& \frac{1}{2}\sum_{l\in \mathbb{Z}}|a_{l}|^{2}+ \frac{1}{2}\sum_{l\in \mathbb{Z}}|a_{l}^{'}|^{2}+\omega_{0}\sum_{|l|\neq m} |a_{m+l}|^{2}\frac{J_{l}^{'}(\omega_{0})}{J_{l}(\omega_{0})}\\
                                &&\quad\quad+\frac{\alpha_{m}}{\overline{\alpha_{m}}}\Bigg[\frac{1}{2}\sum_{|l|\neq m} \overline{a_{m-l}}\overline{a_{m+l}}+\frac{1}{2}\sum_{|l|\neq m} \overline{a_{m-l}^{'}}\overline{a_{m+l}^{'}}+\omega_{0}\sum_{|l|\neq m} (\overline{a_{m-l}^{'}}\overline{a_{m+l}^{'}})\frac{J_{l}^{'}(\omega_{0})}{J_{l}(\omega_{0})}\Bigg]
\end{eqnarray*}
and with
\begin{eqnarray*}
  \sum_{|l|\neq m}|a_{m+l}|^{2}\frac{J_{l}^{'}(\omega_{0})}{J_{l}(\omega_{0})}
&=&  \sum_{|l|\neq m}|a_{m-l}|^{2}\frac{J_{-l}^{'}(\omega_{0})}{J_{-l}(\omega_{0})}
=  \sum_{|l|\neq m}|a_{m-l}|^{2}\frac{J_{l}^{'}(\omega_{0})}{J_{l}(\omega_{0})}
\end{eqnarray*}
we have
\begin{equation*}
\begin{array}{lll}
\frac{\omega_{2}}{\omega_{0}}&=&\underbrace{\sum_{|l|\neq m}\Bigg(\frac{1}{2}-\frac{1}{2}(m-l)^{2}+\omega_{0}\frac{J_{l}^{'}(\omega_{0})}{J_{l}(\omega_{0})}\Bigg)|a_{m-l}|^{2}}_{\Gamma}
\\
&&+\underbrace{\frac{\alpha_{m}}{\overline{\alpha_{m}}}\sum_{l\neq m}\Bigg(\frac{1}{2}-\frac{1}{2}(m^{2}-l^{2})+\omega_{0}\frac{J_{l}^{'}(\omega_{0})}{J_{l}(\omega_{0})}\Bigg)\overline{a_{m+l}}\quad\overline{a_{m-l}}}_{\overline{\Upsilon}}.
\end{array}
\end{equation*}

Thus, since $\displaystyle\frac{\omega_{2}}{\omega_{0}}=\Gamma+\overline{\Upsilon}=\Gamma+\Upsilon=\overline{\Gamma+\Upsilon}$ so $\frac{\omega_{2}}{\omega_{0}}\in \mathbb{R}$ and $\omega_{2}\in \mathbb{R}$. Furthermore, $\Gamma\in \mathbb{R}$, so $\Upsilon\in \mathbb{R}$, and, in particular, $\Upsilon=\pm |\Upsilon|$.

In conclusion, replacing $\omega_{0}$, $\omega_{1}$, $\omega_{2}$ by these values in (\ref{eq.77}) we deduce lemma \ref{Double}.
\end{proof}

\section{Results about the local minimality of the disk in a smooth neighborhood}
\label{sec:Results}

Each $(a_{n})$ and $(b_{n})$ verifying the assumption \ref{asumption} correspond to a convex body of constant width denoted $\Omega_{\varepsilon}$. In the previous section, we obtained asymptotic development of the eigenvalues $\lambda(\Omega_{\varepsilon})$ with respect to a small deformation of the disk. We should now to compare $\lambda(\Omega_{\varepsilon})$ and $\lambda (\Omega_{0})$. Now, if we find some families $(a_{n})$ and $(b_{n})$  such that $\lambda(\Omega_{\varepsilon})< \lambda (\Omega_{0})$, then we can deduce that the disk is not a local minimizer for the corresponding eigenvalue. Otherwise, if for all $a_{n}$ and $b_{n}$ verifying the assumption \ref{asumption} we get $\lambda(\Omega_{\varepsilon})\geq \lambda (\Omega_{0})$, then the disk  is a local minimizer for the corresponding eigenvalue. Also, there are some eigenvalues for which we can not conclude if the disk is a local minimizer or not. The following theorem gives the value of $\kappa$ for which the disk is a local minimizer. We notice that the result for cases $\kappa=1$ and $\kappa=3$ is included in the result given in section \ref{firstmainresult}.

\begin{theorem}\label{Eigenvalues}
The disk is a local minimizer among sets of constant width for $\lambda_{\kappa}$, where $\kappa\in \{1,3,5,8,12,$ $17,27,34,42\}$.
\end{theorem}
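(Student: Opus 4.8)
The plan is to turn the geometric statement into a positivity statement for the second-order coefficient of Lemma \ref{Double}, and to quote Section \ref{firstmainresult} for the two smallest indices. First I would identify, for each $\kappa$ in the list, the pair $(m,p)$ with $\lambda_\kappa(B)=j_{m,p}^2$ by ordering the zeros $j_{m,p}$ of the Bessel functions. One checks that $\kappa=1$ is the simple first eigenvalue $j_{0,1}^2$, already settled by Theorem \ref{iso}; that $\kappa=3$ is the upper member of the double eigenvalue $j_{1,1}^2$, already settled in Section \ref{firstmainresult}; and that each remaining index $\kappa\in\{5,8,12,17,27,34,42\}$ is the \emph{upper} member of a double eigenvalue $j_{m,p}^2$ with $m\geq 2$, namely $j_{2,1},j_{3,1},j_{4,1},j_{5,1},j_{7,1},j_{5,2},j_{6,2}$ respectively. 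Under the admissible deformation the double eigenvalue splits into the two branches $j_{m,p}^2\,[1+2\varepsilon^2(\Gamma\pm|\Upsilon|)+O(\varepsilon^3)]$, and since $\kappa$ labels the upper branch, the disk is a local minimizer for $\lambda_\kappa$ if and only if $\Gamma+|\Upsilon|\geq 0$ for every sequence $(a_l)$ satisfying Assumption \ref{asumption}, i.e. with $a_l=0$ for all even $l$.

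The key reduction is that, as $|\Upsilon|\geq 0$, it suffices to prove $\Gamma\geq 0$ for all admissible $(a_l)$. Since $\Gamma=\sum_{|l|\neq m}C_l\,|a_{m-l}|^2$ is diagonal in the moduli, with
\[
C_l:=\frac12+\frac12(m-l)^2+j_{m,p}\frac{J_l'(j_{m,p})}{J_l(j_{m,p})},
\]
this amounts to the pointwise inequalities $C_l\geq 0$ for every $l$ with $m-l$ odd. Conversely, a single active mode $a_{m-l}\neq 0$ with $l\neq 0$ forces $\Upsilon=0$, so $\Gamma+|\Upsilon|=C_l\,|a_{m-l}|^2$, and a negative $C_l$ would destroy minimality; thus the condition is essentially sharp. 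The only exceptional index is $l=0$ (present only when $m$ is odd), the self-coupled mode $a_m$: a one-mode computation gives $\Gamma+|\Upsilon|=(C_0+|C_0-m^2|)\,|a_m|^2\geq 0$ unconditionally, so its diagonal contribution is always absorbed by the off-diagonal $\Upsilon$ and never needs to be checked. Using $J_{-l}=(-1)^lJ_l$, the quotient depends only on $|l|$, so one has to control $C_l$ only for $|l|=1,3,5,\dots$ when $m$ is even and for $|l|=2,4,\dots$ when $m$ is odd.

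The substantive work is to verify these inequalities at the specific arguments $j_{m,p}$. For $|l|$ large the term $\tfrac12(m-l)^2$ dominates, so I would first bound the logarithmic derivative $j_{m,p}\,J_l'(j_{m,p})/J_l(j_{m,p})$ from below, using the recurrences $J_l'=J_{l-1}-\tfrac{l}{x}J_l=\tfrac{l}{x}J_l-J_{l+1}$ together with standard monotonicity of Bessel quotients, thereby reducing the problem to finitely many small values of $|l|$. For those one evaluates $C_l$ directly at $j_{m,p}$ with controlled numerics.

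The hard part, and the heart of the theorem, is precisely this sign control: $C_l$ becomes large and negative exactly when $j_{m,p}$ lies just above a zero of $J_l$, i.e. near a resonance $j_{m,p}\approx j_{l,q}$. The entire content of the statement is that for the nine listed indices no such near-resonance spoils positivity, whereas for the excluded indices it does. Hence the proof concludes with a finite, case-by-case check that $C_l\geq 0$ for each admissible small $|l|$ at each of the seven arguments $j_{2,1},\dots,j_{6,2}$, which yields $\Gamma\geq 0$ and therefore $\Gamma+|\Upsilon|\geq 0$, completing the argument together with the two quoted results for $\kappa=1$ and $\kappa=3$.
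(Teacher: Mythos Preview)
Your overall strategy matches the paper's exactly: quote Section~\ref{firstmainresult} for $\kappa=1,3$, use Lemma~\ref{Double} for the remaining seven double eigenvalues, note that the upper branch is $j_{m,p}^2[1+2\varepsilon^2(\Gamma+|\Upsilon|)+O(\varepsilon^3)]$, discard $|\Upsilon|\geq 0$, and reduce to a finite sign check on the diagonal coefficients of $\Gamma$. That is precisely the paper's argument, culminating in Lemma~\ref{P}.

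However, your diagonalization of $\Gamma$ is wrong, and this is not cosmetic. You claim that $\Gamma=\sum_{|l|\neq m}C_l|a_{m-l}|^2$ is diagonal in the moduli and that the required condition is $C_l\geq 0$ for each admissible $l$. But the moduli $|a_{m-l}|$ are \emph{not} independent over $l\in\mathbb{Z}$: since $a_{-n}=\overline{a_n}$, the indices $l$ and $2m-l$ give the same modulus $|a_{m-l}|=|a_{l-m}|$. After this pairing (substituting $k=m-l$) the genuinely diagonal form is
\[
\Gamma=\sum_{k>0,\;k\text{ odd}}\Big(\,C_{m-k}+C_{m+k}\,\Big)|a_k|^2
=\sum_{k>0,\;k\text{ odd}}C_{k,m}(j_{m,p})\,|a_k|^2,
\]
with $C_{k,m}=1+k^2+j_{m,p}\dfrac{J_{k-m}'(j_{m,p})}{J_{k-m}(j_{m,p})}+j_{m,p}\dfrac{J_{k+m}'(j_{m,p})}{J_{k+m}(j_{m,p})}$, which is exactly the object the paper checks in Lemma~\ref{P}. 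Your unpaired condition $C_l\geq 0$ is strictly stronger and \emph{always false}: from the identity $j_{m,p}J_{m+1}'(j_{m,p})/J_{m+1}(j_{m,p})=-(m+1)$ one gets $C_{m+1}=1-(m+1)=-m<0$ for every $m\geq 1$, so your verification would halt immediately. The pairing rescues this because $C_{m-1}=1+(m-1)=m$ and hence $C_{1,m}=C_{m-1}+C_{m+1}=0$. Your single-mode test and your $l=0$ formula inherit the same mistake: activating only $a_k$ (and $a_{-k}$) produces $\Gamma=(C_{m-k}+C_{m+k})|a_k|^2$, not $C_l|a_{m-l}|^2$.

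Once you replace $C_l$ by the paired coefficient $C_{k,m}$, your plan coincides with the paper's: large $k$ are handled by Landau's monotonicity of $x\mapsto xJ_n'(x)/J_n(x)$ (which gives $F_{k\pm m}(j_{m,p})\geq 0$ once $j_{m,p}\leq j_{k-m,1}'$), and the finitely many small odd $k$ are dispatched by the exact recursion identities of Appendix~\ref{sec:Appendix}. This is the content of Lemma~\ref{P} and its proof in Appendix~\ref{sec:AppendixB2}.
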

\begin{proof}
To prove this result, we prove that $\lambda(\Omega_{\varepsilon})\geq \lambda (\Omega_{0})$ for all $(a_{n})$ and $(b_{n})$. For the linkage between $\lambda_{\kappa}$ and $j_{m,p}$, in the case of disk of radius $1$, see table 3 in Appendix D.

\begin{description}
  \item[$\bullet$] Case $\kappa=1$:\\
  For $\Omega_{\varepsilon}$ a convex set with constant width, we have computed the simple eigenvalues in lemma \ref{Simple}:

\begin{equation}\label{eq.R1}
  \lambda(\Omega_{\varepsilon})= j_{0,p}^{2}\Bigg(1+4\varepsilon^{2}\sum_{k\in \mathbb{N}^{*}}\Bigg(\frac{1}{2}+\frac{k^{2}}{2}+j_{0,p}\frac{J_{k}^{'}(j_{0,p})}{J_{k}(j_{0,p})}\Bigg)|a_{k}|^{2}+O(\varepsilon^{3})\Bigg)
\end{equation}

The first eigenvalue correspond to $p=1$. J. Landau in \cite{land1999}, p 194 gives a detailed picture of the graph of $F_{n}(x)=x\frac{J_{n}^{'}(x)}{J_{n}(x)}$. It decreases from $n$ at $x=0$ to $-\infty$ at $x=j_{n,1}$, jumping to $+\infty$ and decreases to $-\infty$ in each  interval $]j_{n,p}, j_{n,p+1}[$ for all natural number $p\geq 1$. So if $x\leq j_{n,1}^{'}$ then $F_{n}(x)\geq 0$. We have

\begin{equation*}\label{Landau}
  x\frac{J_{n}^{'}(x)}{J_{n}(x)}\geq 0, \quad \mbox{for}\quad 0 \leq x \leq j_{n,1}^{'}.
\end{equation*}

Using this inequality and $j_{0,1}\leq j_{k,1}^{'}\quad \forall k\geq 2$. We obtain

$$
\frac{1}{2}+\frac{k^{2}}{2}+j_{0,1}\frac{J_{k}^{'}(j_{0,1})}{J_{k}(j_{0,1})}\geq 0, \; \forall k\geq 2.
$$

For $k=1$, using the equality $j_{m,p}\frac{J_{m+1}^{'}(j_{m,p})}{J_{m+1}(j_{m,p})}=-(m+1)$ (see Appendix \ref{sec:Appendix}) for $m=0$ and $p=1$, we find that
$
\frac{1}{2}+\frac{1}{2}+j_{0,1}\frac{J_{1}^{'}(j_{0,1})}{J_{1}(j_{0,1})}=0.
$

So,we conclude that

$$
\frac{1}{2}+\frac{k^{2}}{2}+j_{0,1}\frac{J_{k}^{'}(j_{0,1})}{J_{k}(j_{0,1})}\geq  0 \;\mbox{for all odd naturel number k},
$$

that implies $\lambda(\Omega_{\varepsilon})\geq \lambda(\Omega_{0})=j_{0,1}^{2}$.\\

  \item[$\bullet$] Case $\kappa\in \{3,5,8,12,17,27,34,42\}$:

 In lemma \ref{Double}, we have proved that the double eigenvalue can be written as:
\begin{equation}\label{Ja.3}
\begin{array}{lll}
\lambda (\Omega_{\varepsilon})=j_{m,p}^{2}\Bigg[1+2\varepsilon^{2}\Bigg(\sum\limits_{|l|\neq m}\Bigg(\frac{1}{2}+\frac{(m-l)^{2}}{2}+j_{m,p}\frac{J_{l}^{'}(j_{m,p})}{J_{l}(j_{m,p})}\Bigg)|a_{m-l}|^{2}
\\
\quad\quad\pm\quad\Bigg|\sum\limits_{l\neq m}\Bigg(\frac{1}{2}-\frac{(m^{2}-l^{2})}{2}+j_{m,p}\frac{J_{l}^{'}(j_{m,p})}{J_{l}(j_{m,p})}\Bigg)a_{m+l}a_{m-l}\Bigg|\Bigg)+O(\varepsilon^{3})\Bigg]
\end{array}.
\end{equation}

Since, $\;|a_{m-l}|=|a_{l-m}|$, the first term in (\ref{Ja.3}) is

\begin{equation*}
\begin{array}{ll}
\sum\limits_{|l|\neq m}\Bigg(\frac{1}{2}+\frac{(m-l)^{2}}{2}+j_{m,p}\frac{J_{l}^{'}(j_{m,p})}{J_{l}(j_{m,p})}\Bigg)|a_{m-l}|^{2}.
\\
\quad=\displaystyle\sum\limits_{l\in \mathbb{N},|l|\neq m}\Bigg(1+(m-l)^{2}+\jnp\frac{J_{l}^{'}(\jnp)}{J_{l}(\jnp)}+\jnp\frac{J_{2m-l}^{'}(\jnp)}{J_{2m-l}(\jnp)}\Bigg)|a_{m-l}|^{2}
\end{array}
\end{equation*}
If we take $k=m-l$, then $l=m-k$ and $2m-l=m+k$
\begin{equation*}
\begin{array}{ll}
\sum\limits_{|l|\neq m}\Bigg(1+(m-l)^{2}+\jnp\frac{J_{l}^{'}(\jnp)}{J_{l}(\jnp)}+\jnp\frac{J_{2m-l}^{'}(\jnp)}{J_{2m-l}(\jnp)}\Bigg)|a_{m-l}|^{2}
\\
=\displaystyle\sum\limits_{ k\in \mathbb{N},k\neq0,k\neq2m}\Bigg(1+k^{2}+\jnp\frac{J_{k+m}^{'}(\jnp)}{J_{k+m}(\jnp)}+\jnp\frac{J_{k-m}^{'}(\jnp)}{J_{k-m}(\jnp)}\Bigg)|a_{k}|^{2}\\
=\displaystyle\sum\limits_{ k\in \mathbb{N},k\neq0,k\neq2m}C_{k,m}(j_{m,p})|a_{k}|^{2}.
\end{array}
\end{equation*}
where $C_{k,m}(j_{m,p})=1+k^{2}+\jnp\frac{J_{k+m}^{'}(\jnp)}{J_{k+m}(\jnp)}+\jnp\frac{J_{k-m}^{'}(\jnp)}{J_{k-m}(\jnp)}$.\\

In the following, we are going to prove that the disk is a local minimizer for $\kappa=3$, using the above lemma. For the other eigenvalues, the proof is similar.

\begin{lemma}\label{P}

  \begin{equation}
  C_{k,m}(j_{m,p})=1+k^{2}+\jnp\frac{J_{k+m}^{'}(\jnp)}{J_{k+m}(\jnp)}+\jnp\frac{J_{k-m}^{'}(\jnp)}{J_{k-m}(\jnp)}\geq 0
\end{equation}
for all $k$ odd natural number and $\jnp\in\Bigg\{j_{1,1}; j_{2,1}; j_{3,1}; j_{4,1}; j_{5,1}; j_{5,2}; j_{6,2}; j_{7,1}\Bigg\}$.
\end{lemma}
\begin{proof}
  See appendix \ref{sec:AppendixB2}.
\end{proof}
For $m=1$ and $p=1$, (\ref{Ja.3}) gives the eigenvalues

\begin{equation}\label{above}
  \begin{array}{ll}
\left(
    \begin{array}{c}
     \lambda_{3}(\Omega_{\varepsilon})  \\
      \lambda_{2}(\Omega_{\varepsilon}) \\
    \end{array}
  \right)=j_{1,1}^{2}\Bigg[1+2\varepsilon^{2}\Bigg(\displaystyle\sum\limits_{ k\in \mathbb{N},k\neq0,k\neq2m}C_{k,1}(j_{1,1})|a_{k}|^{2}
\\
\quad\quad\pm\quad\Bigg|\sum\limits_{l\neq m}\Bigg(\frac{1}{2}-\frac{(1^{2}-l^{2})}{2}+j_{1,1}\frac{J_{l}^{'}(j_{1,1})}{J_{l}(j_{1,1})}\Bigg)a_{1+l}a_{1-l}\Bigg|\Bigg)+O(\varepsilon^{3})\Bigg]
\end{array}.
\end{equation}

The lower sign gives $\lambda_{2}(\Omega_{\varepsilon})$ and  the upper sign gives $\lambda_{3}(\Omega_{\varepsilon})$. From lemma \ref{P}, we can see that
$C_{k,1}(j_{1,1})>0,\; \forall k\in \mathbb{N}^{*} \mbox{ odd},$
so, we conclude that the first term in (\ref{above}) is positive.

Likewise, the last term of (\ref{above}) is an absolute value. Therefore, the eigenvalue $\lambda_{3}(\Omega_{\varepsilon})$ (obtained by the upper sign) satisfies
$$
\lambda_{3}(\Omega_{\varepsilon})\geq j_{1,1}^{2}=\lambda_{3}(\Omega_{0}).
$$

\end{description}
\end{proof}
\begin{remark}
  The open cases come from the fact that we are not able to compute the sign of
  \begin{equation}\label{eq.R3}
\begin{array}{lll}
\lambda (\Omega_{\varepsilon})=j_{m,p}^{2}\Bigg[1+2\varepsilon^{2}\Bigg(\displaystyle\sum\limits_{ k\in \mathbb{N},k\neq0,k\neq2m}\Bigg(1+k^{2}+\jnp\frac{J_{k+m}^{'}(\jnp)}{J_{k+m}(\jnp)}+\jnp\frac{J_{k-m}^{'}(\jnp)}{J_{k-m}(\jnp)}\Bigg)|a_{k}|^{2}
\\
\quad\quad-\quad\Bigg|\sum\limits_{l\neq m}\Bigg(\frac{1}{2}-\frac{(m^{2}-l^{2})}{2}+j_{m,p}\frac{J_{l}^{'}(j_{m,p})}{J_{l}(j_{m,p})}\Bigg)a_{m+l}a_{m-l}\Bigg|\Bigg)+O(\varepsilon^{3})\Bigg]
\end{array}.
\end{equation}
for $\jnp\in\Bigg\{j_{1,1}; j_{2,1}; j_{3,1}; j_{4,1}; j_{5,1}; j_{5,2}; j_{6,2}; j_{7,1}\Bigg\}$. Using the second shape derivative, we can give a positive answer for these cases.
\end{remark}

\begin{theorem}\label{X}
The disk is not a local minimizer among sets of constant width for $\lambda_{\kappa}$, where $\kappa\in \mathbb{N}^{*}\setminus\{1,2,3,4,5,7,8,11,12,16,17,26,27,33,34,41,49,50\}$.
\end{theorem}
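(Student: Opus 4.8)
The plan is to reverse the strategy of Theorem~\ref{Eigenvalues}: rather than show that the $\varepsilon^2$-coefficient in Lemmas~\ref{Simple} and~\ref{Double} is nonnegative for every admissible deformation, I would exhibit, for each $\kappa$ in the stated complement, one admissible deformation that makes this coefficient strictly negative, so that $\lambda_\kappa(\Omega_\varepsilon)<\lambda_\kappa(\Omega_0)$ for all small $\varepsilon>0$, which is exactly the failure of local minimality.

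The structural heart is a single-mode reduction. Fix $(m,p)$ with $\lambda_\kappa(\Omega_0)=j_{m,p}^2$ and pick one odd frequency $k\ge 3$ with $k\neq m$; set $a_k\neq 0$ (hence $a_{-k}=\overline{a_k}$), and all other $a_n$ and all $b_n$ equal to $0$. This sequence is finitely supported and vanishes on even indices, so it meets Assumption~\ref{asumption} and defines a genuine convex body of constant width for small $\varepsilon$. For $m>0$ the cross term $\sum_{l\ne m}(\cdots)\,a_{m+l}a_{m-l}$ of Lemma~\ref{Double} then vanishes identically: a nonzero product would force $m+l,\,m-l\in\{k,-k\}$, whose sum $2m$ and difference $2l$ cannot both be realized unless $m=0$ or $l=0,\ m=\pm k$, both excluded. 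Hence both branches of Lemma~\ref{Double} collapse — and the simple formula of Lemma~\ref{Simple} reduces — to the single expression
\begin{equation*}
\lambda(\Omega_\varepsilon)=j_{m,p}^2\bigl(1+2\varepsilon^2\,C_{k,m}(j_{m,p})\,|a_k|^2+O(\varepsilon^3)\bigr),
\end{equation*}
where
\begin{equation*}
C_{k,m}(j_{m,p})=1+k^2+j_{m,p}\frac{J_{m+k}'(j_{m,p})}{J_{m+k}(j_{m,p})}+j_{m,p}\frac{J_{m-k}'(j_{m,p})}{J_{m-k}(j_{m,p})}.
\end{equation*}
Because the two branches decrease \emph{together}, the ordering index $\kappa$ is not reshuffled, so $\lambda_\kappa$ itself strictly decreases whenever $C_{k,m}(j_{m,p})<0$. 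The whole theorem is thereby reduced to the claim: for every $\kappa$ in the complement there is an odd $k\ge 3$ with $C_{k,m}(j_{m,p})<0$.

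To produce such a $k$ I would use the function $F_n(x)=xJ_n'(x)/J_n(x)$ already invoked in Theorem~\ref{Eigenvalues}: by Landau's description $F_n$ equals $n$ near $0$ and then runs strictly decreasingly from $+\infty$ to $-\infty$ on each interval $(j_{n,q},j_{n,q+1})$, so $F_n(x)$ is arbitrarily negative when $x$ lies just below a zero of $J_n$. The frequency $k=1$ is useless — the identities $j_{m,p}J_{m+1}'(j_{m,p})/J_{m+1}(j_{m,p})=-(m+1)$ and $j_{m,p}J_{m-1}'(j_{m,p})/J_{m-1}(j_{m,p})=m-1$ give $C_{1,m}\equiv 0$, reflecting that mode $1$ is a pure translation — so $k=3$ is the first candidate, with $C_{3,m}(j_{m,p})=10+F_{m+3}(j_{m,p})+F_{m-3}(j_{m,p})$. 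The idea is to choose $k$ so that, relative to the orders $m\pm k$, the argument $j_{m,p}$ sits close enough below a zero of $J_{m+k}$ or $J_{m-k}$ for the resulting negative term to overwhelm $1+k^2$; the interlacing $j_{m-k,q}<\cdots<j_{m,p}$ of Bessel zeros is what locates $j_{m,p}$ in the right subinterval.

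The main obstacle is exactly this last analytic step, and it has two faces. For the finitely many small $\kappa$ in the complement it is a direct check: read $(m,p)$ off the table of zeros and verify numerically that $C_{3,m}(j_{m,p})<0$ (or, when $k=3$ fails, some larger odd $k$), the excluded indices being precisely the local-minimizer cases of Theorem~\ref{Eigenvalues} together with the undetermined cases $\{2,4,7,11,16,26,33,41,49,50\}$ where no admissible $k$ works. The genuinely hard part is the infinite tail: since the complement omits only finitely many indices, I must prove that for \emph{all} large $\kappa$ some odd $k\ge 3$ forces $C_{k,m}(j_{m,p})<0$. This demands uniform Bessel estimates — asymptotics of the zeros $j_{m,p}$ together with quantitative control of $F_{m\pm k}$ near its poles — to guarantee that $j_{m,p}$ always lands sufficiently close below a zero of $J_{m+k}$ or $J_{m-k}$. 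Turning this into a clean monotone criterion, rather than an unbounded case-by-case chase, is where the real difficulty lies.
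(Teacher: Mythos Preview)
Your plan is the paper's plan: single-mode reduction with one odd $k\ge 3$, the cross term $\Upsilon$ killed by $k\ne m$, and everything reduced to finding $k$ with $C_{k,m}(j_{m,p})<0$. Two points where the paper diverges from your sketch deserve mention.

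First, the infinite tail is not handled by asymptotics of zeros or by locating $j_{m,p}$ near poles of $F_{m\pm k}$. Instead the paper uses the Bessel recurrences (Appendix~A) to write $C_{3,m}(x)$ and $C_{5,m}(x)$ as explicit rational functions of $x^2$, and then analyzes their signs algebraically. Together with Watson's bound $j_{m,1}\ge\sqrt{m(m+2)}$ this gives, for every $m\ge 9$ and every $p$: either $C_{3,m}(j_{m,p})<0$ (when $j_{m,p}\in[\sqrt{m(m+2)},2\sqrt{(m-1)(m-2)})\cup[2\sqrt{(m+1)(m+2)},\infty)$) or $C_{5,m}(j_{m,p})<0$ (on the complementary interval). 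So the ``uniform estimate'' you anticipate is replaced by a clean two-frequency dichotomy in closed form; no limiting behaviour of $F_n$ is invoked. The remaining $1\le m\le 8$ are finished one by one with the same rational formulas.

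Second, your restriction $k\ne m$ does not cover every case. For $m=3$ and $p\in\{2,3,4\}$ the paper does not find an odd $k\ne 3$ with $C_{k,3}(j_{3,p})<0$ (it shows $C_{5,3}(j_{3,p})<0$ only for $p\ge 5$); instead it takes $k=3=m$, so $\Upsilon$ is nonzero, and computes both the diagonal and cross contributions explicitly to show that the full $\varepsilon^2$-coefficient is negative for \emph{both} branches. Your outline would need either to locate some $k\in\{5,7,9,\dots\}$ that works for these three pairs, or to allow $k=m$ and carry the cross term through as the paper does.
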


To prove this theorem, we will distinguish two cases: simple eigenvalues and double eigenvalues.

\begin{proposition}\label{Simp}
Let $\lambda_{\kappa}$ be an eigenvalue of the Dirichlet Laplacian which is simple for the disk. For $\kappa\neq 1$, $\lambda_{\kappa}$ is not locally minimized by the disk among sets of constant width.
\end{proposition}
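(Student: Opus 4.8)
The plan is to read off the second-order expansion of a simple eigenvalue from Lemma \ref{Simple}. Write $\lambda(\Omega_{0})=j_{0,p}^{2}$; since $\Omega_{\varepsilon}$ has constant width only the odd Fourier coefficients $a_{k}$ survive, and Lemma \ref{Simple} gives
\begin{equation*}
\lambda(\Omega_{\varepsilon})=j_{0,p}^{2}\left(1+4\varepsilon^{2}\sum_{k\ \mathrm{odd}}C_{k}\,|a_{k}|^{2}+O(\varepsilon^{3})\right),\qquad C_{k}:=\frac{1}{2}+\frac{k^{2}}{2}+j_{0,p}\frac{J_{k}^{'}(j_{0,p})}{J_{k}(j_{0,p})}.
\end{equation*}
Hence the disk fails to be a local minimizer as soon as a single coefficient $C_{k}$ is negative for some odd $k$: one then takes the one-parameter family with $a_{k}\neq 0$ (real, with $a_{-k}=a_{k}$) and all other $a_{n},b_{n}$ equal to zero. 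This family trivially satisfies Assumption \ref{asumption} (finitely supported, odd index), and it yields $\lambda(\Omega_{\varepsilon})<\lambda(\Omega_{0})$ for $\varepsilon>0$ small. So the whole proposition reduces to producing, for every $p\geq 2$, one odd index $k$ with $C_{k}<0$.

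First I would rule out the obvious candidate $k=1$: the identity $j_{0,p}J_{1}^{'}(j_{0,p})/J_{1}(j_{0,p})=-1$ (from $J_{0}^{'}=-J_{1}$ and $J_{1}^{'}=J_{0}-\tfrac{1}{x}J_{1}$ evaluated where $J_{0}=0$) gives $C_{1}=0$ for \emph{every} $p$. This neutrality of the lowest mode is exactly what lets $\kappa=1$ escape, so the destabilizing mode must sit at higher frequency. The key step is therefore to test $k=3$ and compute $C_{3}$ in closed form. Using the recurrences at a zero of $J_{0}$, namely $J_{2}(j_{0,p})=\tfrac{2}{j_{0,p}}J_{1}(j_{0,p})$ and $J_{3}(j_{0,p})=\tfrac{8-j_{0,p}^{2}}{j_{0,p}^{2}}J_{1}(j_{0,p})$, together with $xJ_{3}^{'}/J_{3}=xJ_{2}/J_{3}-3$, one obtains the remarkably simple expression
\begin{equation*}
C_{3}=\frac{16}{\,8-j_{0,p}^{2}\,}.
\end{equation*}

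With this formula the conclusion is immediate: since $j_{0,1}^{2}\approx 5.78<8<30.47\approx j_{0,2}^{2}\leq j_{0,p}^{2}$ for all $p\geq 2$, the denominator is positive at $p=1$ (so mode $3$ does \emph{not} destabilize $\lambda_{1}$, consistently with Theorem \ref{Eigenvalues}) and strictly negative for every $p\geq 2$. Thus $C_{3}<0$ whenever $\kappa\neq 1$ is simple, which proves the statement. The point I expect to be the real obstacle is precisely the choice of mode: a priori the oscillation of $x\mapsto xJ_{k}^{'}(x)/J_{k}(x)$ between $\pm\infty$ suggests that the destabilizing frequency should grow with $p$, forcing a $p$-dependent $k=k(p)$ and delicate quantitative Bessel asymptotics; the pleasant surprise is that the single fixed mode $k=3$ works uniformly in $p$, and the whole difficulty collapses into the closed-form reduction above. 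The only routine verifications left are that $J_{1}(j_{0,p})$ and $J_{3}(j_{0,p})$ do not vanish — no two consecutive Bessel functions share a positive zero, and $8-j_{0,p}^{2}\neq 0$ because $2\sqrt{2}$ is not a zero of $J_{0}$ — so that $C_{3}$ is well defined.
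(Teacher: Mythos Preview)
Your proof is correct and follows essentially the same route as the paper: both invoke Lemma \ref{Simple}, choose the fixed mode $k=3$, reduce the Bessel ratio via the recurrences to the closed form $C_{3}=16/(8-j_{0,p}^{2})$ (the paper writes the equivalent $2C_{3}=32/(8-j_{0,p}^{2})$), and conclude from $j_{0,p}>2\sqrt{2}$ for $p\geq 2$. Your additional remarks---that $C_{1}=0$ for all $p$, and the non-vanishing of $J_{1}(j_{0,p})$, $J_{3}(j_{0,p})$---are correct and make the argument slightly more self-contained than the paper's version.
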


\begin{proof}
 The case of simple eigenvalues corresponds to $m=0$. In lemma \ref{Simple}, we have computed the simple eigenvalue of Dirichlet Laplacian on $\Omega_{\varepsilon}$:

 \begin{equation}\label{eq.R1}
  \lambda(\Omega_{\varepsilon})= j_{0,p}^{2}\Bigg(1+4\varepsilon^{2}\sum_{k}\Bigg(\frac{1}{2}+\frac{1}{2} k^{2}+j_{0,p}\frac{J_{k}^{'}(j_{0,p})}{J_{k}(j_{0,p})}\Bigg)|a_{l}|^{2}+O(\varepsilon^{3})\Bigg),
\end{equation}

 so if we can find $k$ such that
$$
\frac{1}{2}+\frac{k^{2}}{2}+j_{0,p}\frac{J_{k}^{'}(j_{0,p})}{J_{k}(j_{0,p})}< 0,\quad k \mbox{ is odd}
$$
we can show that the disk is not a local minimizer. In fact,

$$
\frac{1}{2}+\frac{k^{2}}{2}+j_{0,p}\frac{J_{k}^{'}(j_{0,p})}{J_{k}(j_{0,p})}< 0,\mbox{ for } k=3\mbox{ and }\forall p\geq 2.
$$

From $\jnp\frac{J_{m+3}^{'}(\jnp)}{J_{m+3}(\jnp)}=-(m+3)+\frac{2(m+1)\jnpp}{4(m+2)(m+1)-\jnpp}$(see appendix \ref{sec:Appendix}) and for $m=0$, we have
$
j_{0,p}\frac{J_{3}^{'}(j_{0,p})}{J_{3}(j_{0,p})}=-3+\frac{2j_{0,p}^{2}}{8-j_{0,p}^{2}}.
$
Then
$$
1+(3)^{2}+2j_{0,p}\frac{J_{3}^{'}(j_{0,p})}{J_{3}(j_{0,p})}=10-6+\frac{4j_{0,p}^{2}}{8-j_{0,p}^{2}}=\frac{32}{8-j_{0,p}^{2}}.
$$
As $j_{0,p}\in \;]2\sqrt{2},+\infty[$ $\forall p\geq 2$, $\;\frac{32}{8-j_{0,p}^{2}}< 0$. If we choose $a_{3}\neq 0$ and all other terms equal to zero, we can see that
$$
\lambda_{\kappa}(\Omega_{\varepsilon})=j_{0,p}^{2}\Bigg(1+4\varepsilon^{2}\Bigg(\underbrace{\frac{1}{2}+\frac{3^{2}}{2}+j_{0,p}\frac{J_{3}^{'}(j_{0,p})}{J_{3}^{'}(j_{0,p})}}_{<0}\Bigg)|a_{3}|^{2}+O(\varepsilon^{3})\Bigg)
$$
$$
\Rightarrow \lambda_{\kappa}(\Omega_{\varepsilon})\leq j_{0,p}^{2}=\lambda_{\kappa}(\Omega_{0})\quad\quad\forall p\geq 2.
$$
\end{proof}

\begin{proposition}\label{lemmedouble}
Let $\lambda_{\kappa}$ be an eigenvalue of the Dirichlet-Laplacian which is double for the disk. The disk is not a local minimizer among sets of constant width for $\lambda_{\kappa}$ where $\kappa\in \mathbb{N}^{*}\setminus\{2,3,4,5,7,8,11,12,16,$ $17,26,27,33,34,41,42,49,50\}$.
\end{proposition}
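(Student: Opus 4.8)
\emph{Strategy.} The plan is to produce, for each double eigenvalue outside the excluded set, one admissible deformation that strictly lowers it. By Lemma \ref{Double} a double eigenvalue $j_{m,p}^{2}$ splits as
\[
\lambda(\Omega_{\varepsilon})=j_{m,p}^{2}\Big[1+2\varepsilon^{2}\big(\Gamma\pm|\Upsilon|\big)+O(\varepsilon^{3})\Big],
\]
where $\Gamma=\sum_{k\neq 0,\,k\neq 2m}C_{k,m}(j_{m,p})|a_{k}|^{2}$, the cross term $\Upsilon$ is built from the products $a_{m+l}a_{m-l}$, and the lower (resp.\ upper) sign gives $\lambda_{\kappa}$ (resp.\ $\lambda_{\kappa+1}$). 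To prove non-minimality it suffices to exhibit an admissible $(a_{n})$, $(b_{n})$ (satisfying Assumption \ref{asumption}) for which the relevant bracket is strictly negative.

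\emph{Reduction to a single Fourier mode.} First I would restrict to a single odd mode: fix an odd index $k$, set $a_{k}=\overline{a_{-k}}\neq 0$ and let all other $a_{n}$ and all $b_{n}$ vanish. If moreover $k\neq m$, then every product $a_{m+l}a_{m-l}$ is zero (a nonzero one would force $2m-k=k$, i.e.\ $k=m$), so $\Upsilon=0$ and \emph{both} branches collapse to the common coefficient $C_{k,m}(j_{m,p})|a_{k}|^{2}$. Hence, if for some odd $k\neq m$ one has $C_{k,m}(j_{m,p})<0$, then $\lambda_{\kappa}(\Omega_{\varepsilon})<j_{m,p}^{2}=\lambda_{\kappa}(\Omega_{0})$ and likewise for $\lambda_{\kappa+1}$, so neither index of the pair is locally minimized. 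Note that $k=1$ is useless: the identities $j_{m,p}J_{m+1}'/J_{m+1}=-(m+1)$ and $j_{m,p}J_{m-1}'/J_{m-1}=m-1$ give $C_{1,m}(j_{m,p})=0$, so the first genuinely useful choice is $k=3$.

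\emph{Sign of $C_{k,m}$.} The core is then to show that for every double eigenvalue $j_{m,p}^{2}$ outside the nine exceptional pairs there is an odd $k\neq m$ with $C_{k,m}(j_{m,p})<0$. Using the Appendix recurrences (and $J_{-n}=(-1)^{n}J_{n}$, which leaves the logarithmic derivative unchanged), each $j_{m,p}J_{k\pm m}'/J_{k\pm m}$ is an explicit rational function of $j_{m,p}^{2}$, so $C_{k,m}(j_{m,p})$ is rational in $j_{m,p}^{2}$. With $k=3$ the identity $j_{m,p}J_{m+3}'/J_{m+3}=-(m+3)+2(m+1)j_{m,p}^{2}/\big(4(m+1)(m+2)-j_{m,p}^{2}\big)$ reproduces, exactly as in the simple case where $C_{3,0}=32/(8-j_{0,p}^{2})<0$ for $j_{0,p}>2\sqrt{2}$, a sign flip at $j_{m,p}^{2}=4(m+1)(m+2)$: just above this value the term blows up to $-\infty$, forcing $C_{3,m}(j_{m,p})<0$. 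This disposes of all pairs with $j_{m,p}>2\sqrt{(m+1)(m+2)}$; the finitely many remaining non-exceptional pairs with smaller $j_{m,p}$ are checked one by one, choosing another odd $k$ when needed (e.g.\ $k=5$ when $m=3$ forces $k\neq 3$) and invoking Landau's monotonicity of $x\mapsto xJ_{n}'(x)/J_{n}(x)$ together with the tabulated zeros.

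\emph{Main obstacle.} The delicate point is the uniform control of the sign across the infinitely many pairs and the isolation of \emph{exactly} the nine exceptional ones: the eight values of Lemma \ref{P}, for which $C_{k,m}(j_{m,p})\geq 0$ for every odd $k$ (so the upper branch is a genuine minimizer and the lower branch stays open), together with the pair $\{49,50\}$. Pinning down the threshold from the rational-function form, handling the borderline mid-range cases such as $j_{5,2}$ and $j_{6,2}$ (exceptional despite $p=2$, since there $j_{m,p}$ lies just below $2\sqrt{(m+1)(m+2)}$ and no odd $k$ rescues them), and keeping the bookkeeping between the index $\kappa$ and the pair $(m,p)$ consistent, is where the real work lies.
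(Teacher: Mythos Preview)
Your overall strategy coincides with the paper's: reduce to a single odd Fourier mode $k$ with $k\neq m$ so that $\Upsilon=0$, and then find such a $k$ with $C_{k,m}(j_{m,p})<0$. The bookkeeping on when $\Upsilon$ vanishes and the observation $C_{1,m}(j_{m,p})=0$ are correct.

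There is, however, a genuine gap in the case $m=3$, $p\in\{2,3,4\}$ (i.e.\ $\lambda_{18}=\lambda_{19}$, $\lambda_{35}=\lambda_{36}$, $\lambda_{58}=\lambda_{59}$). Your parenthetical ``$k=5$ when $m=3$'' does not work there: using the explicit rational form from Appendix~\ref{sec:Appendix} one gets
\[
C_{5,3}(j_{3,p})=20+\frac{j_{3,p}^{2}\big(960-20\,j_{3,p}^{2}\big)}{13440-360\,j_{3,p}^{2}+j_{3,p}^{4}}
=\frac{268800-6240\,j_{3,p}^{2}}{\big(j_{3,p}^{2}-(180-\sqrt{18960})\big)\big(j_{3,p}^{2}-(180+\sqrt{18960})\big)},
\]
which is \emph{positive} for $p=2,3,4$ (numerator and denominator are both negative in that range) and becomes negative only from $p\geq 5$. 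A short check shows that no other odd $k\neq 3$ helps for $j_{3,2}$ either: $C_{7,3}(j_{3,2})\geq 46$, $C_{9,3}(j_{3,2})>0$ since $F_{6}(j_{3,2})\approx -56$ is not negative enough against $1+9^{2}+F_{12}(j_{3,2})>82$, and for $k\geq 11$ both logarithmic derivatives are already nonnegative at $j_{3,2}$. So the single-mode-with-$k\neq m$ reduction simply cannot cover these three pairs. The paper closes this gap by taking $a_{3}\in\mathbb{R}\setminus\{0\}$ (hence $k=m=3$, $\Upsilon\neq 0$) and computing \emph{both} branches explicitly: with $-4+j_{3,p}J_{0}'/J_{0}=-32/(8-j_{3,p}^{2})>0$ for $p\geq 2$, the two second-order coefficients are $-576\,j_{3,p}^{2}/\big((8-j_{3,p}^{2})(80-j_{3,p}^{2})\big)$ and $640/(80-j_{3,p}^{2})$, both strictly negative since $j_{3,p}^{2}>80$ for $p\geq 2$. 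You need this two-branch computation (or an equivalent one) to finish.

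A smaller point: the ``blow-up at $j_{m,p}^{2}=4(m+1)(m+2)$'' heuristic for $C_{3,m}$ only controls the sign locally, not for all $j_{m,p}>2\sqrt{(m+1)(m+2)}$; moreover the $J_{m-3}$ term has its own pole at $4(m-1)(m-2)$. The paper in fact splits the half-line into the interval $V=[\,2\sqrt{(m-1)(m-2)},\,2\sqrt{(m+1)(m+2)}\,]$, where $k=5$ is used, and its complement $I$, where $k=3$ is used (valid for $m\geq 9$), with the remaining $4\leq m\leq 8$ handled individually. Your sketch should reflect that $k=3$ alone is not enough even for large $m$.
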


\begin{proof}
We are in the case of double eigenvalue $(m\neq 0)$ and by the lemma \ref{Double}, $\lambda(\Omega_{\varepsilon})$ is given by

$$
\lambda (\Omega_{\varepsilon}) =j_{m,p}^{2}\Bigg[1+2\varepsilon^{2}\Bigg(\sum_{|l|\neq m}\Bigg(\frac{1}{2}+\frac{1}{2}(m-l)^{2}+j_{m,p}\frac{J_{l}^{'}(j_{m,p})}{J_{l}(j_{m,p})}\Bigg)|a_{m-l}|^{2}
$$
$$
\quad\quad + \underbrace{\frac{\alpha_{m}}{\overline{\alpha_{m}}}
\Bigg(\sum_{l\neq m}\frac{1}{2}-\frac{1}{2}(m^{2}-l^{2})+j_{m,p}\frac{J_{l}^{'}(j_{m,p})}{J_{l}(j_{m,p})}\Bigg)a_{m+l}a_{m-l}\Bigg)}_{\Upsilon}+O(\varepsilon^{3})\Bigg].
$$
Taken $k=m-l$, we have
$$
\lambda (\Omega_{\varepsilon}) =j_{m,p}^{2}\Bigg[1+2\varepsilon^{2}\Bigg(\displaystyle\sum\limits_{ k\in \mathbb{N},k\neq0,k\neq2m}\Bigg(\underbrace{1+k^{2}+\jnp\frac{J_{k+m}^{'}(\jnp)}{J_{k+m}(\jnp)}+\jnp\frac{J_{k-m}^{'}(\jnp)}{J_{k-m}(\jnp)}}_{C_{k,m}(j_{m,p})}\Bigg)|a_{k}|^{2}
$$
$$
\quad\quad + \underbrace{\frac{\alpha_{m}}{\overline{\alpha_{m}}}
\Bigg(\sum_{l\neq m}\frac{1}{2}-\frac{1}{2}(m^{2}-l^{2})+j_{m,p}\frac{J_{l}^{'}(j_{m,p})}{J_{l}(j_{m,p})}\Bigg)a_{m+l}a_{m-l}\Bigg)}_{\Upsilon}+O(\varepsilon^{3})\Bigg].
$$

If we can find $k$ such that

\begin{equation}\label{zak.5}
C_{k,m}(j_{m,p})= 1+k^{2}+j_{m,p}\frac{J_{k-m}^{'}(j_{m,p})}{J_{k-m}(j_{m,p})}+j_{m,p}\frac{J_{k+m}^{'}(j_{m,p})}{J_{k+m}(j_{m,p})}< 0
 \end{equation}
we can not conclude if the disk is a local minimizer or not, because we don't know the sign of the second term $\Upsilon$. For this reason we have to look for  particular $k$ which verifies (\ref{zak.5}) and makes $\Upsilon$ vanish. In this case we will be able to find $a_{n}$ and $b_{n}$ satisfying the necessary conditions and such that $\lambda(\Omega_{\varepsilon})\leq \lambda(\Omega_{0})$, which proves the above proposition.\\

Firstly we prove the statement for $m\geq 9$, then we prove the remaining cases one by one. In this proof we need some inequality. In order to make clear the proof  we will prove them in Appendix \ref{sec:AppendixB}.
\begin{description}
  \item[$\bullet$] For $m\geq 9$ and $\forall p\in \mathbb{N}^{*}$,  we have that
\begin{equation}\label{C3mC5m}
  C_{3,m}(j_{m,p})< 0 \quad \mbox{if}\; j_{m,p}\in I
\end{equation}

where $I=[\sqrt{m(m+2)}, 2\sqrt{(m-1)(m-2)}[\cup [ 2\sqrt{(m+1)(m+2)},+\infty[$,\\

and

\begin{equation}\label{C3mC5m}
  C_{5,m}(j_{m,p})< 0 \quad \mbox{if}\; j_{m,p}\in V
\end{equation}
where $V=[ 2\sqrt{(m-1)(m-2)},2\sqrt{(m+1)(m+2)}[$.\\

It should be noted that we do not take into consideration $j_{m,p}\in [0,\sqrt{m(m+2)}[$ because there is no $j_{m,p}$ in this interval and it comes from the inequality given by G.Watson (\cite{wat1995},p.486), $j_{m,1}\geq \sqrt{m(m+2)}$ which implies $j_{m,p}\geq \sqrt{m(m+2)}\; \forall p\in \mathbb{N}^{*}$. We are going to prove that the disk is not a local minimizer for the eigenvalue corresponding to $j_{m,p}$ with $m\geq 9$ and $\forall p\in \mathbb{N}^{*}$. \\

If $j_{m,p}\in I$, we can take $a_{3}\neq 0$ and $a_{j}=0\quad \forall j\neq 3$, so $a_{m-l}a_{m+l}=0\; \forall l\in \mathbb{Z}$ and $\Upsilon=0$. Then
$$
\lambda (\Omega_{\varepsilon})=j_{m,p}^{2}\Bigg[1+2\varepsilon^{2}\Bigg(\underbrace{C_{3,m}(j_{m,p}}_{< 0})\Bigg)|a_{3}|^{2}+O(\varepsilon)^{3}\Bigg]\leq j_{m,p}^{2}
$$
$$
\Rightarrow \lambda (\Omega_{\varepsilon})\leq \lambda (\Omega_{0}).
$$

If $j_{m,p}\in V$, we can take $a_{5}\neq 0$ and $a_{j}=0\quad \forall j\neq 5$, then $\Upsilon=0$ and
$$
\lambda (\Omega_{\varepsilon})=j_{m,p}^{2}\Bigg[1+2\varepsilon^{2}\Bigg(\underbrace{C_{5,m}(j_{m,p}}_{< 0})\Bigg)|a_{5}|^{2}+O(\varepsilon)^{3}\Bigg]\leq j_{m,p}^{2}
$$
$$
\Rightarrow \lambda (\Omega_{\varepsilon})\leq \lambda (\Omega_{0}).
$$

  \item[$\bullet$] For $m=1$, $m=2$, we have that
\begin{equation}\label{C31}
  C_{3,1}(j_{1,p})< 0\quad \forall p\geq 2
\end{equation}
\begin{equation}\label{C32}
  C_{3,2}(j_{2,p})< 0\quad \forall p\geq 2
\end{equation}

It is sufficient to choose $a_{3}\neq0$ and $a_{j}= 0\quad \forall j\neq 3$, to see that for $m=1$ or $m=2$ we have
$
a_{m-l}a_{m+l}=0 \; \forall l\in \mathbb{Z}.
$,
By (\ref{C31}) and (\ref{C32}), one has
$$
\lambda (\Omega_{\varepsilon})=j_{m,p}^{2}\Bigg[1+2\varepsilon^{2}\Bigg(\underbrace{C_{3,m}(j_{m,p}}_{<0})\Bigg)|a_{3}|^{2}+O(\varepsilon^{3})\Bigg]\leq j_{m,p}^{2}
$$
and therefore
$$
\lambda (\Omega_{\varepsilon})\leq \lambda (\Omega_{0})
$$

  \item[$\bullet$] For $m=3$

  \begin{equation}\label{C33}
  C_{3,3}(j_{3,p})< 0\quad\quad \forall p\geq 2
\end{equation}
\begin{equation}\label{App41}
  C_{5,3}(j_{3,p})< 0\quad\quad \forall p\geq 5
\end{equation}
We are going to prove that the disk is not a local minimizer for the eigenvalue corresponding to $j_{m,p}$ with $m=3$ and $\forall p\geq 2$. \\

In this case, by choosing $a_{3}\neq0$ and $a_{j}=0\; \forall j\neq 3$, the second term in the expression of $\lambda(\Omega_{\varepsilon})$ in lemma \ref{Double} doesn't vanish and we can not conclude. For this reason, we choose $k=5$. \\

By using (\ref{App41}), we have
$$
\lambda (\Omega_{\varepsilon})=j_{3,p}^{2}\Bigg[1+2\varepsilon^{2}\underbrace{C_{5,3}(j_{3,p})}_{<0}|a_{5}|^{2}+O(\varepsilon^{3})\Bigg]\leq j_{3,p}^{2}\quad \forall p\geq 5
$$
and therefore
$$
 \lambda (\Omega_{\varepsilon})\leq \lambda (\Omega_{0}).
$$

We still have three terms $j_{3,2}$, $j_{3,3}$ and $j_{3,4}$. By using lemma \ref{Double}

\begin{equation*}
\begin{array}{lll}
\lambda (\Omega_{\varepsilon})&=&j_{3,p}^{2}\Bigg[1+2\varepsilon^{2}\Bigg(\sum\limits_{|l|\neq 3}\Bigg(\frac{1}{2}+\frac{1}{2}(3-l)^{2}+j_{3,p}\frac{J_{l}^{'}(j_{3,p})}{J_{l}(j_{3,p})}\Bigg)|a_{3-l}|^{2}
\\
&&+\frac{\alpha_{3}}{\overline{\alpha_{3}}}\Bigg(\sum\limits_{l\neq 3}\frac{1}{2}-\frac{1}{2}(3^{2}-l^{2})+j_{3,p}\frac{J_{l}^{'}(j_{3,p})}{J_{l}(j_{3,p})}\Bigg)a_{3+l}a_{3-l}\Bigg)+O(\varepsilon^{3})\Bigg]
\\
&=&j_{3,p}^{2}\Bigg[1+2\varepsilon^{2}\Bigg(\sum\limits_{|l|\neq 3}\Bigg(\frac{1}{2}+\frac{1}{2}(3-l)^{2}+j_{3,p}\frac{J_{l}^{'}(j_{3,p})}{J_{l}(j_{3,p})}\Bigg)|a_{3-l}|^{2}
\\
&&\pm\; \Bigg|\Bigg(\sum\limits_{l\neq 3}\frac{1}{2}-\frac{1}{2}(3^{2}-l^{2})+j_{3,p}\frac{J_{l}^{'}(j_{3,p})}{J_{l}(j_{3,p})}\Bigg)a_{3+l}a_{3-l}\Bigg)\Bigg|+O(\varepsilon^{3})\Bigg].
\end{array}
\end{equation*}
For $k=3-l$,
\begin{eqnarray*}
\lambda (\Omega_{\varepsilon})   &=&j_{3,p}^{2}\Bigg[1+2\varepsilon^{2}\Bigg(\displaystyle\sum\limits_{ k\in \mathbb{N},k\neq0,k\neq6}\Bigg(1+k^{2}+j_{3,p}\frac{J_{k-3}^{'}(j_{3,p})}{J_{k-3}(j_{3,p})}(j_{3,p})\\
&&\quad \quad +j_{3,p}\frac{J_{k+3}^{'}(j_{3,p})}{J_{k+3}(j_{3,p})}\Bigg)|a_{k}|^{2}
\\
&&\pm\; \Bigg|\Bigg(\sum\limits_{l\neq 3}\frac{1}{2}-\frac{1}{2}(3^{2}-l^{2})+j_{3,p}\frac{J_{l}^{'}(j_{3,p})}{J_{l}(j_{3,p})}\Bigg)a_{3+l}a_{3-l}\Bigg)\Bigg|+O(\varepsilon^{3})\Bigg].
\end{eqnarray*}

For $a_{3}\neq 0$ and $a_{j}= 0$ $\forall j\not= 3$, we have

\begin{equation}
\begin{array}{lll}
\lambda (\Omega_{\varepsilon})&=&j_{3,p}^{2}\Bigg[1+2\varepsilon^{2}\Bigg((10+j_{3,p}\frac{J_{0}^{'}(j_{3,p})}{J_{0}(j_{3,p})}+j_{3,p}\frac{J_{6}^{'}(j_{3,p})}{J_{6}(j_{3,p})})|a_{3}|^{2}
\\
                      &&\pm\quad \Bigg|-4+j_{3,p}\frac{J_{0}^{'}(j_{3,p})}{J_{0}(j_{3,p})}\Bigg|(a_{3})^{2}\Bigg)\Bigg]+O(\varepsilon^{3}).
\end{array}
\end{equation}

In order to have $(a_{3})^{2}=|a_{3}|^{2}$, we can take $a_{3}\in \mathbb{R}$,

\begin{equation}\label{Ja.4}
\begin{array}{lll}
\lambda (\Omega_{\varepsilon})&=&j_{3,p}^{2}\Bigg[1+2\varepsilon^{2}\Bigg(10+j_{3,p}\frac{J_{0}^{'}(j_{3,p})}{J_{0}(j_{3,p})}+j_{3,p}\frac{J_{6}^{'}(j_{3,p})}{J_{6}(j_{3,p})}
\\
                      &&\pm\quad \Bigg|-4+j_{3,p}\frac{J_{0}^{'}(j_{3,p})}{J_{0}(j_{3,p})}\Bigg||a_{3}|^{2}\Bigg)\Bigg]+O(\varepsilon^{3}).
\end{array}
\end{equation}

By using $\jnp\frac{J_{m-3}^{'}(\jnp)}{J_{m-3}(\jnp)}=(m-3)-\frac{2(m-1)\jnpp}{4(m-2)(m-1)-\jnpp}$ (see Appendix \ref{sec:Appendix}), for $m=3$, we have
\begin{eqnarray*}
 -4+j_{3,p}\frac{J_{0}^{'}(j_{3,p})}{J_{0}(j_{3,p})}  &=& -4+\frac{-4j_{3,p}^{2}}{8-j_{3,p}^{2}} \\
                                                      &=& \frac{-32}{8-j_{3,p}^{2}}>0\quad \forall p\geq 2.
\end{eqnarray*}
\begin{description}
  \item[$\bullet$] In equality(\ref{Ja.4}) with the sign $(+)$, one has\\
 $$
\Bigg(10+j_{3,p}\frac{J_{0}^{'}(j_{3,p})}{J_{0}(j_{3,p})}+j_{3,p}\frac{J_{6}^{'}(j_{3,p})}{J_{p}(j_{3,p})}+\Bigg|-4+j_{3,p}\frac{J_{0}^{'}(j_{3,p})}{J_{0}(j_{3,p})}\Bigg|\Bigg).
$$
$$
=6+2j_{3,p}\frac{J_{0}^{'}(j_{3,p})}{J_{0}(j_{3,p})}+j_{3,p}\frac{J_{6}^{'}(j_{3,p})}{J_{p}(j_{3,p})}
$$
By using $\jnp\frac{J_{m+3}^{'}(\jnp)}{J_{m+3}(\jnp)}=-(m+3)+\frac{2(m+1)\jnpp}{4(m+2)(m+1)-\jnpp}$ (see Appendix \ref{sec:Appendix}), for $m=3$, one gets
$$
j_{3,p}\frac{J_{6}^{'}(j_{3,p})}{J_{p}(j_{3,p})}=-6+\frac{8j_{3,p}^{2}}{80-j_{3,p}^{2}}\mbox{and}\;j_{3,p}\frac{J_{0}^{'}(j_{3,p})}{J_{0}(j_{3,p})}=\frac{-4j_{3,p}^{2}}{8-j_{3,p}^{2}}
$$
Finally,
$$
\Bigg(10+j_{3,p}\frac{J_{0}^{'}(j_{3,p})}{J_{0}(j_{3,p})}+j_{3,p}\frac{J_{6}^{'}(j_{3,p})}{J_{p}(j_{3,p})}+\Bigg|-4+j_{3,p}\frac{J_{0}^{'}(j_{3,p})}{J_{0}(j_{3,p})}\Bigg|\Bigg)
$$
$$
=\frac{-576j_{3,p}^{2}}{(8-j_{3,p}^{2})(80-j_{3,p}^{2})}<0\;,\;j_{3,p}>\sqrt{80}\; \forall p\geq 2.
$$
  \item[$\bullet$] In equality (\ref{Ja.4}) with the sign $(-)$, one has\\
 $$
\Bigg(10+j_{3,p}\frac{J_{0}^{'}(j_{3,p})}{J_{0}(j_{3,p})}+j_{3,p}\frac{J_{6}^{'}(j_{3,p})}{J_{p}(j_{3,p})}-\Bigg|-4+j_{3,p}\frac{J_{0}^{'}(j_{3,p})}{J_{0}(j_{3,p})}\Bigg|\Bigg).
$$
$$
=14+j_{3,p}\frac{J_{6}^{'}(j_{3,p})}{J_{p}(j_{3,p})}=8+\frac{8j_{3,p}^{2}}{80-j_{3,p}^{2}}=\frac{640}{80-j_{3,p}^{2}}<0\;\forall p\geq 2.
$$
\end{description}

We conclude that the disk is not a local minimizer for the eigenvalues corresponding to $j_{3,2},j_{3,3}, j_{3,4}$.\\

\item[$\bullet$] For $m=4$, $m=5$, $m=6$, $m=7$, $m=8$:\\

The proof is the same, we just need to know $|a_{k}|$ such that $C_{k,m}(j_{m,p})<0$ and we take $a_{k}\neq 0$ and $a_{l}= 0$, for $l\neq k$. So,

$$
\lambda (\Omega_{\varepsilon})=j_{m,p}^{2}\Bigg[1+2\varepsilon^{2}\underbrace{C_{k,m}(j_{m,p})}_{\leq 0}|a_{k}|^{2}+O(\varepsilon)^{3}\Bigg]< j_{m,p}^{2}
$$
$$
\Rightarrow \lambda (\Omega_{\varepsilon})\leq \lambda (\Omega_{0})
$$

\begin{description}
    \item[$\bullet$] For $m= 4$
\begin{equation}\label{C34}
C_{3,4}(j_{4,p})< 0\quad\quad \forall p\geq 2
\end{equation}

  \item[$\bullet$] For $m= 5$
\begin{equation}\label{C35}
C_{3,5}(j_{5,p})< 0\quad\quad \forall p\geq 3
\end{equation}
  \item[$\bullet$] For $m= 6$
\begin{equation}\label{C56}
C_{5,6}(j_{6,1})< 0
\end{equation}
\begin{equation}\label{C36}
C_{3,6}(j_{6,p})< 0\quad\quad \forall p\geq 3
\end{equation}
  \item[$\bullet$] For $m= 7$
\begin{equation}\label{C37}
C_{3,7}(j_{7,p})< 0\quad\quad  \forall p\geq 3
\end{equation}
\item[$\bullet$] For $m=8$
\begin{equation}\label{C38}
C_{3,8}(j_{8,p})< 0\quad\quad \forall p\geq 3
\end{equation}

\begin{equation}\label{C38andC58}
C_{3,8}(j_{8,1})< 0\quad and \quad  C_{5,8}(j_{8,2})< 0
\end{equation}
\end{description}
\end{description}
\end{proof}

\begin{remark}
Concerning the zero of the bessel function $j_{7,p}$, for $p=1$  which corresponds to $\lambda_{49}$ and $\lambda_{50}$, we are ranking them as open cases for two reasons. On one hand, the answer to this question requires a lot of calculation which is going to make the paper much longer. On the other hand,  when we try to solve the question related to $\lambda_{49}$, we find that the disk is not a local minimizer while for $\lambda_{50}$, we are not able to decide whether the disk is a local minimizer or not.

We conjecture that the disk is not a local minimizer for the open cases and its a global minimizer for $\lambda_{3}$ among open set of constant width.

\end{remark}


\appendix
\section{Some ratios of Bessel functions}
\label{sec:Appendix}

We consider the following classical results on Bessel Functions:
\begin{equation}\label{eq.119}
  \forall n\in \mathbb{N},\quad \forall x\in \mathbb{R}_{+}^{*},\quad xJ_{n}^{'}=nJ_{n}-xJ_{n+1}
\end{equation}
\begin{equation}\label{eq.120}
  \forall n\in \mathbb{N},\quad \forall x\in \mathbb{R}_{+}^{*},\quad xJ_{n}^{'}=-nJ_{n}+xJ_{n-1}
\end{equation}
\begin{equation}\label{eq.121}
  \forall n\in \mathbb{N},\quad \forall x\in \mathbb{R}_{+}^{*},\quad \frac{2n}{x}J_{n}=J_{n-1}+J_{n+1}
\end{equation}

These result can be found in \cite{wat1995}, p45.\\

$\bullet$ Computation of $\displaystyle j_{m,p}\frac{J_{m-1}^{'}(j_{m,p})}{J_{m-1}(j_{m,p})}$\\

From (\ref{eq.119}) and for $n=m-1$, we deduce that
\begin{equation}\label{Jmm1}
j_{m,p}\frac{J_{m-1}^{'}(j_{m,p})}{J_{m-1}(j_{m,p})}=(m-1).
\end{equation}

$\bullet$ Computation of $\displaystyle j_{m,p}\frac{J_{m+1}^{'}(j_{m,p})}{J_{m+1}(j_{m,p})}$\\

From (\ref{eq.120}) and for $n=m+1$, we conclude that
\begin{equation}\label{Jmp1}
j_{m,p}\frac{J_{m+1}^{'}(j_{m,p})}{J_{m+1}(j_{m,p})}=-(m+1).
\end{equation}

$\bullet$ Computation of $\displaystyle j_{m,p} \frac{J_{m-3}^{'}(j_{m,p})}{J_{m-3}(j_{m,p})}$:\\

From (\ref{eq.121}), we have that
$
\frac{2(m-1)}{\jnp}J_{m-1}(\jnp)=J_{m-2}(\jnp)+J_{m}(\jnp)
$

and therefore
\begin{equation}\label{Ap1}
\frac{J_{m-1}(\jnp)}{J_{m-2}(\jnp)}=\frac{\jnp}{2(m-1)}.
\end{equation}
Again by (\ref{eq.121}):
$\frac{2(m-2)}{j_{m,p}}J_{m-2}(j_{m,p})=J_{m-3}(j_{m,p})+J_{m-1}(j_{m,p})
$,
that implies
$
 \frac{J_{m-3}(j_{m,p})}{J_{m-2}(j_{m,p})}=\frac{2(m-2)}{j_{m,p}}-\frac{J_{m-1}(j_{m,p})}{J_{m-2}(j_{m,p})}
$,
and by  (\ref{Ap1}) we get
\begin{equation}\label{eq.X}
\frac{J_{m-3}(\jnp)}{J_{m-2}(\jnp)}= \frac{2(m-2)}{\jnp}-\frac{\jnp}{2(m-1)} = \frac{4(m-1)(m-2)-\jnpp}{2\jnp (m-1)}.
\end{equation}
From (\ref{eq.119}), we have
$
j_{m,p} J_{m-3}^{'}(j_{m,p})=(m-3)J_{m-3}(j_{m,p})-j_{m,p} J_{m-2}(j_{m,p})
$
and therefore
$$
 j_{m,p} \frac{J_{m-3}^{'}(j_{m,p})}{J_{m-3}(j_{m,p})}=m-3-j_{m,p} \frac{J_{m-2}(j_{m,p})}{J_{m-3}(j_{m,p})},
$$
and by (\ref{eq.X}) we obtain
\begin{equation}\label{eq.AAA1}
\jnp\frac{J_{m-3}^{'}(\jnp)}{J_{m-3}(\jnp)}=(m-3)-\frac{2(m-1)\jnpp}{4(m-2)(m-1)-\jnpp}.
\end{equation}

$\bullet$ Computation of $\displaystyle j_{m,p} \frac{J_{m+3}^{'}(j_{m,p})}{J_{m+3}(j_{m,p})}$:\\

From (\ref{eq.121}) and $J_{m}(j_{m,p})=0$ we have that
$
\frac{2(m+1)}{\jnp}J_{m+1}(\jnp)=J_{m}(\jnp)+J_{m+2}(\jnp)
$
that implies
\begin{equation}\label{Ap.2}
 \frac{J_{m+1}(\jnp)}{J_{m+2}(\jnp)}=\frac{\jnp}{2(m+1)}.
\end{equation}
By using (\ref{eq.121}) we have
$
\frac{2(m+2)}{j_{m,p}}J_{m+2}(j_{m,p})=J_{m+1}(j_{m,p})+J_{m+3}(j_{m,p})
$,
that implies
$
 \frac{J_{m+3}(j_{m,p})}{J_{m+2}(j_{m,p})}=\frac{2(m+2)}{j_{m,p}}-\frac{J_{m+1}(j_{m,p})}{J_{m+2}(j_{m,p})}
$. By (\ref{Ap.2})
\begin{equation}\label{A}
\frac{J_{m+3}(\jnp)}{J_{m+2}(\jnp)}= \frac{2(m+2)}{\jnp}-\frac{\jnp}{2(m+1)} = \frac{4(m+1)(m+2)-\jnpp}{2\jnp (m+1)}.
\end{equation}
From (\ref{eq.120}) we have that
$
j_{m,p} J_{m+3}^{'}(j_{m,p})=-(m+3)J_{m+3}(j_{m,p})+j_{m,p} J_{m+2}(j_{m,p})
$
and therefore
$
 j_{m,p} \frac{J_{m+3}^{'}(j_{m,p})}{J_{m+3}(j_{m,p})}=-(m+3)+j_{m,p} \frac{J_{m+2}(j_{m,p})}{J_{m+3}(j_{m,p})}.
$
Finally, using (\ref{A}) we get
\begin{equation}\label{Jmp3}
\jnp\frac{J_{m+3}^{'}(\jnp)}{J_{m+3}(\jnp)}=-(m+3)+\frac{2(m+1)\jnpp}{4(m+2)(m+1)-\jnpp}
\end{equation}

$\bullet$ Computing of $\displaystyle j_{m,p} \frac{J_{m+5}^{'}(j_{m,p})}{J_{m+5}(j_{m,p})}$:\\

From (\ref{eq.121}), we have that $\;
\frac{J_{m+4}(j_{m,p})}{J_{m+3}(j_{m,p})}=\frac{2(m+3)}{j_{m,p}}-\frac{J_{m+2}(j_{m,p})}{J_{m+3}(j_{m,p})}.
$
By using equality (\ref{A}), we deduce
\begin{equation}\label{eq.B}
\begin{array}{lll}
\frac{J_{m+4}(\jnp)}{J_{m+3}(\jnp)} &=&\frac{2(m+3)}{\jnp}-\frac{2\jnp (m+1)}{4(m+1)(m+2)-\jnpp}
\\
&=& \frac{8(m+3)(m+2)(m+1)-4\jnpp (m+2)}{\jnp(4(m+1)(m+2)-\jnpp)}.
\end{array}
\end{equation}

Therefore, (\ref{eq.121}) gives that
$
\frac{J_{m+5}(\jnp)}{J_{m+4}(\jnp)}=\frac{2(m+4)}{\jnp}-\frac{J_{m+3}(\jnp)}{J_{m+4}(\jnp)}
$. Using (\ref{eq.B}), we have
$$
\frac{J_{m+5}(\jnp)}{J_{m+4}(\jnp)}=\frac{2(m+4)}{\jnp}-\frac{\jnp (4(m+2)(m+1)-\jnpp)}{8(m+3)(m+2)(m+1)-4\jnpp (m+2)}
$$
\begin{equation}\label{eq.C}
\quad\quad= \frac{16(m+4)(m+3)(m+2)(m+1)-4\jnpp (m+2)(3m+9)+j_{m,p}^{4}}{\jnp (8(m+3)(m+2)(m+1)-4\jnpp (m+2))}.
\end{equation}
From (\ref{eq.120}), we have that
$
\jnp\frac{J_{m+5}^{'}(\jnp)}{J_{m+5}(\jnp)}=-(m+5)+\jnp\frac{J_{m+4}(\jnp)}{J_{m+5}(\jnp)}
$. Using (\ref{eq.C}), we deduce that
\begin{equation}\label{E}
\begin{array}{lll}
\jnp\frac{J_{m+5}^{'}(\jnp)}{J_{m+5}(\jnp)}&=&-(m+5)
\\
&&+\frac{\jnpp \Bigg(8(m+3)(m+2)(m+1)-4\jnpp (m+2)\Bigg)}{16(m+4)(m+3)(m+2)(m+1)-4\jnpp (m+2)(3m+9)+j_{m,p}^{4}}
\end{array}
\end{equation}

$\bullet$ Computation of $\displaystyle j_{m,p} \frac{J_{m-5}^{'}(j_{m,p})}{J_{m-5}(j_{m,p})}$:\\

From (\ref{eq.121}), we have that
$
\;\frac{J_{m-4}(\jnp)}{J_{m-3}(\jnp)}=\frac{2(m-3)}{\jnp}-\frac{J_{m-2}(\jnp)}{J_{m-3}(\jnp)}
$. By using equality (\ref{eq.X}):
\begin{equation}\label{BB}
\begin{array}{lll}
\frac{J_{m-4}(\jnp)}{J_{m-3}(\jnp)} &=& \frac{2(m-3)}{\jnp}-\frac{2\jnp (m-1)}{4(m-1)(m-2)-\jnpp}
\\
&=& \frac{8(m-3)(m-2)(m-1)-4\jnpp (m-2)}{\jnp(4(m-1)(m-2)-\jnpp)}.
\end{array}
\end{equation}
Therefore, (\ref{eq.121}) gives that
$
\frac{J_{m-5}(\jnp)}{J_{m-4}(\jnp)}=\frac{2(m-4)}{\jnp}-\frac{J_{m-3}(\jnp)}{J_{m-4}(\jnp)}
$, and using (\ref{BB}), we have
$$
\frac{J_{m-5}(\jnp)}{J_{m-4}(\jnp)}=\frac{2(m-4)}{\jnp}-\frac{\jnp (4(m-2)(m-1)-\jnpp)}{8(m-3)(m-2)(m-1)-4\jnpp (m-2)}
$$
$$
\quad\quad=\frac{16(m-4)(m-3)(m-2)(m-1)-4\jnpp (m-2)(3m-9)+j_{m,p}^{4}}{\jnp (8(m-3)(m-2)(m-1)-4\jnpp (m-2))}
$$

From (\ref{eq.119}), we have that
$
\jnp\frac{J_{m-5}^{'}(\jnp)}{J_{m-5}(\jnp)}=(m-5)-\jnp\frac{J_{m-4}(\jnp)}{J_{m-5}(\jnp)}
$. Finally, we get

\begin{equation}\label{D}
\begin{array}{lll}
\jnp\frac{J_{m-5}^{'}(\jnp)}{J_{m-5}(\jnp)}=(m-5)
\\
\qquad\qquad-\frac{\jnpp \Bigg(8(m-3)(m-2)(m-1)-4\jnpp (m-2)\Bigg)}{16(m-4)(m-3)(m-2)(m-1)-4\jnpp (m-2)(3m-9)+j_{m,p}^{4}}
\end{array}
\end{equation}

\section{Proof of lemma \ref{P}}
\label{sec:AppendixB2}

\textbf{Lemma 6}

  \begin{equation}
 C_{k,m}(j_{m,p})=1+k^{2}+\jnp\frac{J_{k+m}^{'}(\jnp)}{J_{k+m}(\jnp)}+\jnp\frac{J_{k-m}^{'}(\jnp)}{J_{k-m}(\jnp)}\geq 0
\end{equation}
for all $k$ odd natural number and $\jnp\in\Bigg\{j_{1,1}; j_{2,1}; j_{3,1}; j_{4,1}; j_{5,1}; j_{5,2}; j_{6,2}; j_{7,1}\Bigg\}$.
\begin{proof}
To prove this lemma, we use the ratios of Bessel functions given in the previous Appendix and the graph of the function $x\frac{J_{n}^{'}(x)}{J_{n}(x)}$ given in \cite{land1999}. We give the proof for the small values $j_{1,1}$, $j_{2,1}$, $j_{3,1}$ and $j_{4,1}$ and the large one $j_{6,2}$. The other cases are obtained by the same reasoning. We set
\begin{equation}\label{Ckmx}
C_{k,m}(x)=1+k^{2}+x\frac{J_{k+m}^{'}(x)}{J_{k+m}(x)}+x\frac{J_{k-m}^{'}(x)}{J_{k-m}(x)}
\end{equation}
 and
$$
F_{n}(x)=x\frac{J_{n}^{'}(x)}{J_{n}(x)},\quad x>0\quad,\quad n\in \mathbb{N}.
$$
J. Landau in (\cite{land1999}, p 194) gives a detailed picture of the graph of $F_{n}(x)$. $F_{n}(x)$ decreases from $n$ at $x=0$ to $-\infty$ at $x=j_{n,1}$, jumping to $+\infty$  and decreases to $-\infty$ in each interval $]j_{m,p},j_{m,p+1}[$ for all natural number $p\geq 1$.

\begin{equation}\label{J.L1}
  \mbox{If}\; 0<x\leq j_{n,1}^{'}\; \mbox{this implies}\;F_{n}(x)=x\frac{J_{n}^{'}(x)}{J_{n}(x)}\geq 0.
\end{equation}

It is clear that $j_{k-m,1}^{'}\leq j_{k+m,1}^{'}$. Then, for all real positif $x$, such that $x\leq j_{k-m,1}^{'}$, we have: $F_{k-m}(x)\geq0, \mbox{ and } F_{k+m}(x)\geq 0$. This implies

\begin{equation}\label{JL}
  C_{k,m}(x)\geq 0\mbox{ for all } 0<x\leq j_{k-m,1}^{'}.
\end{equation}

\begin{description}
  \item[$\bullet$] For $j_{1,1}$:\\
We are looking for the values of $k$ which verify
$$
j_{1,1}\leq j_{k-1,1}^{'}.
$$
We have $j_{1,1}\leq j_{3,1}^{'}$ (see table 1 and table 2) in Appendix D. \\
 $$j_{1,1}\leq j_{k-1,1}^{'}\Rightarrow k-1\geq 3\Rightarrow k\geq 4$$.\\
  So by (\ref{JL}), $C_{k,1}(j_{1,1})\geq 0 \mbox{ for } k\geq 4$.\\
On the other hand, for $k=1$, we have
$$
C_{1,1}(j_{1,1})=1+1+j_{1,1}\frac{J_{2}^{'}(j_{1,1})}{J_{2}(j_{1,1})}+j_{1,1}\frac{J_{0}^{'}(j_{1,1})}{J_{0}(j_{1,1})}.$$
Using (\ref{Jmm1}) and (\ref{Jmp1}) for $m=1$ and $p=1$, we deduce that $C_{1,1}(j_{1,1})=0$.
Also, for $k=3$, we have: $$
C_{3,1}(j_{1,1})=1+9+j_{1,1}\frac{J_{4}^{'}(j_{1,1})}{J_{4}(j_{1,1})}+j_{1,1}\frac{J_{2}^{'}(j_{1,1})}{J_{2}(j_{1,1})}
.$$
By using (\ref{Jmp1}) and (\ref{Jmp3}) for $m=1$ and $p=1$, we have that $C_{3,1}(j_{1,1})=4+\frac{4j_{1,1}^{2}}{24-j_{1,1}^{2}}=\frac{96}{24-j_{1,1}^{2}}>0$, because $j_{1,1}< 2\sqrt{6}$. Finally, $C_{k,1}(j_{1,1})> 0$ for all $k$ odd natural number.
  \item[$\bullet$] For $j_{2,1}$:\\
We are looking for the values of $k$, which verify $
j_{2,1}\leq j_{k-2,1}^{'}.$
We have $j_{2,1}\leq j_{4,1}^{'}$ (see table 1 and table 2)in Appendix D.\\
  $$j_{2,1}\leq j_{k-2,1}^{'}\Rightarrow k-2\geq 4\Rightarrow k\geq 6$$
   So by (\ref{JL}) $C_{k,2}(j_{2,1})\geq 0 \mbox{ for } k\geq 6$.
   \begin{itemize}
     \item For $k=1$
     $$
C_{1,2}(j_{2,1})=2+j_{2,1}\frac{J_{3}^{'}(j_{2,1})}{J_{3}(j_{2,1})}+j_{2,1}\frac{J_{1}^{'}(j_{2,1})}{J_{1}(j_{2,1})},
$$
and by using (\ref{Jmm1}) and (\ref{Jmp1})  for $m=2$ and $p=1$, we obtain $C_{1,2}(j_{2,1})=0$.

     \item For $k=3$
$$
C_{3,2}(j_{2,1})=10+j_{2,1}\frac{J_{5}^{'}(j_{1,1})}{J_{5}(j_{2,1})}+j_{2,1}\frac{J_{1}^{'}(j_{2,1})}{J_{1}(j_{2,1})}.
$$
By using (\ref{Jmp1}) and (\ref{Jmp3}) for $m=2$ and $p=1$,  since $j_{2,1}< 4\sqrt{3}$, we have $C_{3,2}(j_{2,1})=\frac{288}{48-j_{2,1}^{2}}>0$.

     \item For $k=5$,
$$
C_{5,2}(j_{2,1})=26+j_{2,1}\frac{J_{7}^{'}(j_{2,1})}{J_{7}(j_{2,1})}+j_{2,1}\frac{J_{3}^{'}(j_{2,1})}{J_{3}(j_{2,1})}.
$$
By using (\ref{Jmp1}) and (\ref{E}) for $m=2$ and $p=1$, we have
$$
C_{5,2}(j_{2,1})=16+\frac{j_{2,1}^{2}(480-16j_{2,1}^{2})}{5760-240 j_{2,1}^{2}+j_{2,1}^{4}}
$$
$$
C_{5,2}(j_{2,1})=\frac{92160-3360j_{2,1}^{2}}{(j_{2,1}^{2}-(120+24\sqrt{15}))(j_{2,1}^{2}-(120-24\sqrt{15}))} \geq0,
$$
because $j_{2,1}\leq \sqrt{120-24\sqrt{15}} \leq \sqrt{\frac{92160}{3360}}\leq \sqrt{120+24\sqrt{15}}$. Therefore $C_{k,2}(j_{2,1})> 0$ for all odd natural numbers $k$.
   \end{itemize}
 \item[$\bullet$] For $j_{3,1}$:\\
 We have $j_{3,1}\leq j_{5,1}^{'}$ (see table 1 and table 2)in Appendix D.
  $$j_{3,1}\leq j_{k-3,1}^{'}\Rightarrow k-3\geq 5\Rightarrow k\geq 8$$
  So by (\ref{JL}), $C_{k,3}(j_{3,1})\geq 0 \mbox{ for } k\geq 8$.
 \begin{itemize}
   \item For $k=1$,

      $$C_{1,3}(j_{3,1})=1+1^2+j_{3,1}\frac{J_{2}^{'}(j_{3,1})}{J_{2}(j_{3,1})}+j_{3,1}\frac{J_{4}^{'}(j_{1,1})}{J_{4}(j_{3,1})}.$$

   By using (\ref{Jmm1}) and (\ref{Jmp1}), we get that $C_{1,3}(j_{3,1})=0$.

   \item For $k=3$

   $$C_{3,3}(j_{3,1})=1+3^2+j_{3,1}\frac{J_{0}^{'}(j_{3,1})}{J_{0}(j_{3,1})}+j_{3,1}\frac{J_{6}^{'}(j_{3,1})}{J_{6}(j_{3,1})}.$$
   We have $j_{3,1}\leq j_{6,1}^{'}$. By (\ref{J.L1}) this implies $j_{3,1}\frac{J_{6}^{'}(j_{3,1})}{J_{6}(j_{3,1})}\geq 0$. By using (\ref{eq.AAA1}) one gets:
   $$1+3^2+j_{3,1}\frac{J_{0}^{'}(j_{3,1})}{J_{0}(j_{3,1})}=10-\frac{4j^{2}_{3,1}}{8-j^{2}_{3,1}}=\frac{80-14j^{2}_{3,1}}{8-j^{2}_{3,1}} \geq 0, $$
   because $j_{3,1}\geq\sqrt{8} \geq\sqrt{\frac{80}{14}}$.

   \item For $k=5$,
   $$C_{5,3}(j_{3,1})=1+5^2+j_{3,1}\frac{J_{2}^{'}(j_{3,1})}{J_{2}(j_{3,1})}+j_{3,1}\frac{J_{8}^{'}(j_{3,1})}{J_{8}(j_{3,1})}.$$
   We have $j_{3,1}\leq j_{8,1}^{'}.$ By (\ref{J.L1}), this implies $j_{3,1}\frac{J_{8}^{'}(j_{3,1})}{J_{8}(j_{3,1})}\geq 0$.\\ By using (\ref{Jmm1}), one gets $\;1+5^2+j_{3,1}\frac{J_{2}^{'}(j_{3,1})}{J_{2}(j_{3,1})}=28 \geq 0$.

   \item For $k=7$
    $$C_{7,3}(j_{3,1})=1+7^2+j_{3,1}\frac{J_{4}^{'}(j_{3,1})}{J_{4}(j_{3,1})}+j_{3,1}\frac{J_{10}^{'}(j_{3,1})}{J_{10}(j_{3,1})}.$$
     We have $j_{3,1}\leq j_{10,1}^{'}.$ By (\ref{J.L1}), this implies $j_{3,1}\frac{J_{8}^{'}(j_{3,1})}{J_{8}(j_{3,1})}\geq 0$.\\
      By using (\ref{Jmp1}), we deduce $\;1+7^2+j_{3,1}\frac{J_{4}^{'}(j_{3,1})}{J_{4}(j_{3,1})}=46\geq 0$.
 \end{itemize}

  \item[$\bullet$] For $j_{4,1}$:\\
  We have $j_{4,1}\leq j_{7,1}^{'}$ (see table 1 and table 2) in Appendix D.
   $$j_{4,1}\leq j_{k-4,1}^{'}\Rightarrow k-4\geq 7\Rightarrow k\geq 11.$$
    So by (\ref{JL}), $\;C_{k,4}(j_{4,1})\geq 0 \mbox{ for } k\geq 11$.
 \begin{itemize}
   \item For $k=1$
   $$C_{1,4}(j_{4,1})=1+1^2+j_{4,1}\frac{J_{3}^{'}(j_{4,1})}{J_{3}(j_{4,1})}+j_{4,1}\frac{J_{5}^{'}(j_{4,1})}{J_{5}(j_{4,1})}.$$
     By using (\ref{Jmm1}) and (\ref{Jmp1}), we have that $C_{1,4}(j_{4,1})=0$.
   \item For $k=3$
      $$C_{3,4}(j_{4,1})=1+3^2+j_{4,1}\frac{J_{1}^{'}(j_{4,1})}{J_{1}(j_{4,1})}+j_{4,1}\frac{J_{7}^{'}(j_{4,1})}{J_{7}(j_{4,1})}.$$
     We have $j_{4,1}\leq j_{7,1}^{'}$. By (\ref{J.L1}) this implies $j_{4,1}\frac{J_{7}^{'}(j_{4,1})}{J_{7}(j_{4,1})}\geq 0$.\\
      By using (\ref{eq.AAA1}), one gets $$1+3^2+j_{4,1}\frac{J_{1}^{'}(j_{4,1})}{J_{1}(j_{4,1})}=\frac{264-17j_{4,1}^{2}}{24-j_{4,1}^{2}}\geq 0,$$
       because $j_{4,1}\geq\sqrt{24} \geq\sqrt{\frac{264}{17}}$.
   \item For $k=5$
        $$C_{5,4}(j_{4,1})=1+5^2+j_{4,1}\frac{J_{1}^{'}(j_{4,1})}{J_{1}(j_{4,1})}+j_{4,1}\frac{J_{9}^{'}(j_{4,1})}{J_{9}(j_{4,1})}.$$
     We have $j_{4,1}\leq j_{9,1}^{'}$. By (\ref{J.L1}), this implies $j_{4,1}\frac{J_{9}^{'}(j_{4,1})}{J_{9}(j_{4,1})}\geq 0$.\\
      By using (\ref{eq.AAA1}), one deduce $$1+5^2+j_{4,1}\frac{J_{1}^{'}(j_{4,1})}{J_{1}(j_{4,1})}=\frac{648-33j_{4,1}^{2}}{24-j_{4,1}^{2}}\geq 0$$
      because $j_{4,1}\geq\sqrt{24} \geq\sqrt{\frac{648}{33}}.$
   \item For $k=7$
    $$C_{7,4}(j_{4,1})=1+7^2+j_{4,1}\frac{J_{3}^{'}(j_{4,1})}{J_{3}(j_{4,1})}+j_{4,1}\frac{J_{11}^{'}(j_{4,1})}{J_{11}(j_{4,1})}.$$
     We have $j_{4,1}\leq j_{11,1}^{'}$. By (\ref{J.L1}), this implies $j_{4,1}\frac{J_{11}^{'}(j_{4,1})}{J_{11}(j_{4,1})}\geq 0$.\\
      By using (\ref{Jmm1}), one gets $\;1+7^2+j_{4,1}\frac{J_{3}^{'}(j_{4,1})}{J_{3}(j_{4,1})}= 53\geq 0$.
   \item For $k=9$
       $$C_{9,4}(j_{4,1})=1+9^2+j_{4,1}\frac{J_{5}^{'}(j_{4,1})}{J_{5}(j_{4,1})}+j_{4,1}\frac{J_{13}^{'}(j_{4,1})}{J_{13}(j_{4,1})}.$$
     We have $j_{4,1}\leq j_{13,1}^{'}$. By (\ref{J.L1}), this implies $j_{4,1}\frac{J_{13}^{'}(j_{4,1})}{J_{13}(j_{4,1})}\geq 0$. \\
     By using (\ref{Jmp1}), one gets $\;1+9^2+j_{4,1}\frac{J_{5}^{'}(j_{4,1})}{J_{5}(j_{4,1})}= 77\geq 0$.
 \end{itemize}
     \item[$\bullet$] For $j_{6,2}$:\\
     We have $j_{6,2}\leq j_{12,1}^{'}$ (see table 1 and table 2)in Appendix D.
      $$j_{6,2}\leq j_{k-6,1}^{'}\Rightarrow k-6\geq 12\Rightarrow k\geq 18$$
       So by (\ref{JL}), $\;C_{k,6}(j_{6,2})\geq 0 \mbox{ for } k\geq 18$.\\
 \begin{itemize}
   \item For $k=1$
      $$C_{1,6}(j_{6,2})=1+1^2+j_{6,2}\frac{J_{5}^{'}(j_{6,2})}{J_{5}(j_{6,2})}+j_{6,2}\frac{J_{7}^{'}(j_{6,2})}{J_{7}(j_{6,2})}.$$
By using (\ref{Jmm1}) and (\ref{Jmp1}),one gets $\;C_{1,6}(j_{6,2})=0$.

   \item For $k=3$
   $$C_{3,6}(j_{6,2})=1+3^2+j_{6,2}\frac{J_{3}^{'}(j_{6,2})}{J_{3}(j_{6,2})}+j_{6,2}\frac{J_{9}^{'}(j_{6,2})}{J_{9}(j_{6,2})}.$$
      By using (\ref{eq.AAA1}) and (\ref{Jmp3}),
      $$C_{3,6}(j_{6,2})=\frac{-2(j_{6,2}^{2}-(-472+24\sqrt{449}))(j_{6,2}^{2}+(472+24\sqrt{449}))}{(80-j_{6,2}^{2})(224-j_{6,2}^{2})}\geq 0$$

       because $\sqrt{-472+24\sqrt{449}}\leq \sqrt{80}\leq j_{6,2}\leq \sqrt{224}$.

   \item For $k=5$
   $$C_{5,6}(j_{6,2})=1+5^2+j_{6,2}\frac{J_{1}^{'}(j_{6,2})}{J_{1}(j_{6,2})}+j_{6,2}\frac{J_{11}^{'}(j_{6,2})}{J_{11}(j_{6,2})}.$$
      By using (\ref{E}) and (\ref{D}), one has\\

      $C_{5,6}(j_{6,2})=16+\frac{-j_{6,2}^{2}(480-16j_{6,2}^{2})}{(j_{6,2}^{2}-(72+8\sqrt{51}))(j_{6,2}^{2}-(72-8\sqrt{51}))}$
      $$
      \quad \quad +\frac{j_{6,2}^{2}(4032-32j_{6,2}^{2})}{(j_{6,2}^{2}-(432+48\sqrt{46}))(j_{6,2}^{2}-(432-48\sqrt{46}))}\geq 0,$$

       since $\sqrt{(432-48\sqrt{46})}\leq \sqrt{\frac{4032}{32}}\leq j_{6,2} \leq \sqrt{(432+48\sqrt{46})}$ and \\
       $j_{6,2}\geq \sqrt{72+8\sqrt{51}}\geq \sqrt{\frac{480}{16}}\geq \sqrt{72-8\sqrt{51}}.$

   \item For $k=7$
    $$C_{7,6}(j_{6,2})=1+7^2+j_{6,2}\frac{J_{1}^{'}(j_{6,2})}{J_{1}(j_{6,2})}+j_{6,2}\frac{J_{13}^{'}(j_{6,2})}{J_{13}(j_{6,2})}.$$
     We have $j_{6,2}\leq j_{13,1}^{'}$. By (\ref{J.L1}), this implies $j_{6,2}\frac{J_{13}^{'}(j_{6,2})}{J_{13}(j_{6,2})}\geq 0$. \\
     By using (\ref{D}), one has $$1+7^2+j_{6,2}\frac{J_{1}^{'}(j_{6,2})}{J_{1}(j_{6,2})}=1+7^2+1-\frac{j_{6,2}^{2}(480-16j_{6,2}^{2})}{1920-144j_{6,2}^{2}+j_{6,2}^{4}}$$
     $$
     \quad\quad=51+\frac{-j_{6,2}^{2}(480-16j_{6,2}^{2})}{(j_{6,2}^{2}-(72+8\sqrt{51}))(j_{6,2}^{2}-(72-8\sqrt{51}))}\geq 0,$$
     because $j_{6,2}\geq\sqrt{72+8\sqrt{51}} \geq\sqrt{\frac{480}{10}}\geq\sqrt{72-8\sqrt{51}}.$

   \item For $k=9$
    $$C_{9,6}(j_{6,2})=1+9^2+j_{6,2}\frac{J_{3}^{'}(j_{6,2})}{J_{3}(j_{6,2})}+j_{6,2}\frac{J_{15}^{'}(j_{6,2})}{J_{15}(j_{6,2})}.$$
     We have $j_{6,2}\leq j_{15,1}^{'}$. By (\ref{J.L1}), this implies $j_{6,2}\frac{J_{15}^{'}(j_{6,2})}{J_{15}(j_{6,2})}\geq 0$.\\
      By using (\ref{eq.AAA1}), one gets
      $$1+9^2+j_{6,2}\frac{J_{3}^{'}(j_{6,2})}{J_{3}(j_{6,2})}=\frac{6800-95j_{6,2}^{2}}{80-j_{6,2}^{2}}\geq 0,$$
     because $j_{6,2}\geq\sqrt{80} \geq\sqrt{\frac{6800}{95}}$.

   \item For $k=11$
   $$C_{11,6}(j_{6,2})=1+11^2+j_{6,2}\frac{J_{5}^{'}(j_{6,2})}{J_{5}(j_{6,2})}+j_{6,2}\frac{J_{17}^{'}(j_{6,2})}{J_{17}(j_{6,2})}.$$
     We have $j_{6,2}\leq j_{17,1}^{'}$. By (\ref{J.L1}), this implies $j_{6,2}\frac{J_{17}^{'}(j_{6,2})}{J_{17}(j_{6,2})}\geq 0$.\\
      By using (\ref{Jmm1}), $\;1+11^2+j_{6,2}\frac{J_{5}^{'}(j_{6,2})}{J_{5}(j_{6,2})}=127\geq 0.$

   \item For $k=13$
    $$C_{13,6}(j_{6,2})=1+13^2+j_{6,2}\frac{J_{7}^{'}(j_{6,2})}{J_{7}(j_{6,2})}+j_{6,2}\frac{J_{19}^{'}(j_{6,2})}{J_{19}(j_{6,2})}.$$
     We have $j_{6,2}\leq j_{19,1}^{'}$. By (\ref{J.L1}), this implies $j_{6,2}\frac{J_{19}^{'}(j_{6,2})}{J_{19}(j_{6,2})}\geq 0$. \\
     By using (\ref{Jmp1}), one has $\;1+13^2+j_{6,2}\frac{J_{7}^{'}(j_{6,2})}{J_{7}(j_{6,2})}=163\geq 0$.

   \item For $k=15$
      $$C_{15,6}(j_{6,2})=1+15^2+j_{6,2}\frac{J_{9}^{'}(j_{6,2})}{J_{9}(j_{6,2})}+j_{6,2}\frac{J_{21}^{'}(j_{6,2})}{J_{21}(j_{6,2})}.$$
     We have $j_{6,2}\leq j_{21,1}^{'}$. By (\ref{J.L1}), this implies $j_{6,2}\frac{J_{21}^{'}(j_{6,2})}{J_{21}(j_{6,2})}\geq 0$.\\
      By using (\ref{Jmp3}), one has
       $$1+15^2+j_{6,2}\frac{J_{9}^{'}(j_{6,2})}{J_{9}(j_{6,2})}=\frac{48608-203j_{6,2}^{2}}{224-j_{6,2}^{2}}\geq 0,$$
     because $j_{6,2}\leq \sqrt{224} \leq\sqrt{\frac{48608}{203}}$.

   \item For $k=17$
        $$C_{17,6}(j_{6,2})=1+17^2+j_{6,2}\frac{J_{11}^{'}(j_{6,2})}{J_{11}(j_{6,2})}+j_{6,2}\frac{J_{23}^{'}(j_{6,2})}{J_{23}(j_{6,2})}.$$
     We have $j_{6,2}\leq j_{23,1}^{'}$. By (\ref{J.L1}), this implies $j_{6,2}\frac{J_{23}^{'}(j_{6,2})}{J_{23}(j_{6,2})}\geq 0$.\\
      By using (\ref{Jmp3}), one gets \\

      $1+17^2+j_{6,2}\frac{J_{11}^{'}(j_{6,2})}{J_{11}(j_{6,2})}=279+\frac{j_{6,2}^{2}(4032-32j_{6,2}^{2})}{(j_{6,2}^{2}-(432+48\sqrt{46}))(j_{6,2}^{2}-(432-48\sqrt{46}))}\geq 0,$ \\

since $\sqrt{(432-48\sqrt{46})}\leq \sqrt{\frac{4032}{32}}\leq j_{6,2} \leq \sqrt{(432+48\sqrt{46})}$ .
 \end{itemize}
\end{description}
\end{proof}
\section{Proof of inequalities (\ref{C3mC5m})- (\ref{C38andC58})}
\label{sec:AppendixB}
By using (\ref{Jmp3}) and (\ref{eq.AAA1}), we have that
$$
C_{3,m}(j_{m,p})=4-\frac{2j_{m,p}^{2}(m-1)}{4(m-2)(m-1)-j_{m,p}^{2}}+\frac{2j_{m,p}^{2}(m+1)}{4(m+2)(m+1)-j_{m,p}^{2}}.
$$
Let us define
$$
C_{3,m}(x)=4-\frac{2x^{2}(m-1)}{4(m-2)(m-1)-x^{2}}+\frac{2x^{2}(m+1)}{4(m+2)(m+1)-x^{2}}.
$$
for $x\in I$, where $I=[\sqrt{m(m+2)},2\sqrt{(m-1)(m-2)}[\cup ]2\sqrt{(m+1)(m+2)},+\infty[$.\\

After simplification, we obtain
$$
C_{3,m}(x)=\frac{64m^{4}-320 m^{2}+256-x^{2}(64m^{2}+32)}{(4(m-2)(m-1)-x^{2})(4(m+2)(m+1)-x^{2})}
$$
\begin{description}
 \item[$\bullet$] For $\sqrt{m(m+2)}<x<2\sqrt{(m-1)(m-2)}$:\\

 We have that:
 $$
 (4(m-2)(m-1)-x^{2}) >0\;,\;(4(m+2)(m+1)-x^{2})>0
 $$
 and
 \begin{eqnarray*}
  64m^{4}-320 m^{2}+256-x^{2}(64m^{2}+32)  &<& 64m^{4}-320 m^{2}+256-m(m+2)(64m^{2}+32) \\
                                           &=& -128m^{3}-348m^{2}-64m+256<0\;\forall m\geq 4.
 \end{eqnarray*}

  \item[$\bullet$] For $2\sqrt{(m+1)(m+2)}<x$:\\

  We have that:
  $$
(4(m-2)(m-1)-x^{2}) <0\;,\;(4(m+2)(m+1)-x^{2})<0$$
 and
  \begin{eqnarray*}
  64m^{4}-320 m^{2}+256-x^{2}(64m^{2}+32)  &<& 64m^{4}-320 m^{2}+256-4(m+1)(m+2)(64m^{2}+32) \\
                                           &=& -192m^{4}-768m^{3}-960m^{2}-384<0\; \forall m\geq 4.
 \end{eqnarray*}
\end{description}

Finally, we deduce that

$$\forall m\geq 4, C_{3,m}(x)<0\;\mbox{for}\; x\in I.$$

Furthermore, from the lower bound for the first zeros of Bessel functions\\

$j_{m,1}\geq \sqrt{m(m+2)}\;\forall m\in \mathbb{N}$ (see \cite{wat1995}, p:486), we have that
\begin{equation}
C_{3,m}(j_{m,p})<0, \forall m\geq 4 \mbox{ and } j_{m,p}\in I.
\label{C3minf0}
\end{equation}

$\bullet$ Proof of  (\ref{C3mC5m}):\\
By using (\ref{E}) and (\ref{D}), we have
$$
C_{5,m}(j_{m,p})=16+\frac{\jnpp \Bigg(8(m+3)(m+2)(m+1)-4\jnpp (m+2)\Bigg)}{16(m+4)(m+3)(m+2)(m+1)-4\jnpp (m+2)(3m+9)+j_{m,p}^{4}}
$$
$$
-\frac{\jnpp \Bigg(8(m-3)(m-2)(m-1)-4\jnpp (m-2)\Bigg)}{16(m-4)(m-3)(m-2)(m-1)-4\jnpp (m-2)(3m-9)+j_{m,p}^{4}}.
$$
Let us define

$$C_{5,m}(x)=16+\frac{(x^2(8(m+3)(m+2)(m+1)-4(m+2)x^2))}{(16(m+4)(m+3)(m+2)(m+1)-4(m+2)(3m+9)x^2+x^4)}
$$
$$
\quad-\frac{(x^2(8(m-3)(m-2)(m-1)-4(m-2)x^2))}{(16(m-4)(m-3)(m-2)(m-1)-4(m-2)(3m-9)x^2+x^4)}
$$

for $x\in V$, where $V=[2\sqrt{(m-1)(m-2)},2\sqrt{(m+1)(m+2)}]$.\\

After simplification, we obtain $\displaystyle C_{5,m}(x)=\frac{-32P(x)}{Q(x)R(x)}$ where

$$
P(x)=x^{6}(18m^{2}+33)+x^{4}(378m^{2}-114m^{4}-2568)+x^{2}(224m^{6}-2176m^{4}-1504m^{2})
$$
$$
\quad \quad +(3840m^{6}-128m^{8}-34944m^{4}+104960m^{2}-73728)
$$
and

$$Q(x)=16(m+4)(m+3)(m+2)(m+1)-4(m+2)(3m+9)x^2+x^4$$

and

$$R(x)=16(m-4)(m-3)(m-2)(m-1)-4(m-2)(3m-9)x^2+x^4.$$

For a fixed $m$,

$$Q^{'}(x)=4x(x^2-6(m+2)(m+3)< 0\;\mbox{for}\;x \in V\;\mbox{and}\; m\geq 9.$$

This implies\\

$Q(x)\leq Q(2\sqrt{(m-1)(m-2)})=-16m^4-32m^3+1104m^2+992m-128 <0\;\mbox{for}\;  m\geq 9$.\\

For a fixed $m$,
$$R^{'}(x)=4x(x^2-6(m-2)(m-3)).$$

For $x \in V\;\mbox{and}\; m\geq 21$, $R'(x)<0$.

And for $9\leq m \leq 20$, one gets

\begin{itemize}
  \item $R^{'}(x)<0$, for $2\sqrt{(m-1)(m-2)}<x<\sqrt{6(m-2)(m-3)}$
  \item $R^{'}(x)>0$, for $\sqrt{6(m-2)(m-3)}<x<2\sqrt{(m-1)(m-2)}$
\end{itemize}
This implies \\
\begin{equation*}
 R(x)  \leq  \displaystyle \sup{(R(2\sqrt{(m-1)(m-2)}),R(2\sqrt{(m+1)(m+2)}))}<0 \;\mbox{ for } \;9\leq m\leq 20
\end{equation*}
Then, $$
R(x)< 0,\mbox{ for } x\in V \mbox{ and } m\geq 9.
$$
Therefore,
$$Q(x)R(x)>0 \;\mbox{for}\; x\in V\;\mbox{and}\;m\geq 9$$.

For a fixed $m$\\

$ P^{'}(x)=x^{5}(108m^2+198)+x^{3}(-456m^4+1512m^2-10272)+x(448m^6-4352m^4-3008m^2+52992)\geq 0$ for $x\in V$ and $m\geq 9$. This implies \\

$P(x)\geq P(2\sqrt{(m-2)(m-1)})=96 m^8 - 2112 m^7 + 19392m^6-79872 m^5 +117600 m^4 +17472 m^3 -162432 m^2 +99072 m-9216
 = 96 (m - 1)^2 (m - 2) (m^5 - 18 m^4 + 125 m^3 - 240 m^2 - 396 m + 48)
>0 \mbox{ for } m \geq 9$.

Finally,
$$C_{5,m}(x)=\frac{-32P(x)}{Q(x)R(x)}\leq 0;\;\mbox{for}\;x \in V\;\mbox{and}\;m\geq 9.$$
We conclude that for
$$m\geq 9,\; C_{5,m}(j_{m,p})< 0\;\mbox{for}\;j_{m,p}\in [2\sqrt{(m-1)(m-2)},2\sqrt{(m+1)(m+2)}].$$

$\bullet$ Proof of (\ref{C31})
$$
C_{3,1}(j_{1,p})=1+3^{2}+j_{1,p}\frac{J_{3-1}^{'}(j_{1,p})}{J_{3-1}(j_{1,p})}+j_{1,p} \frac{J_{3+1}^{'}(j_{1,p})}{J_{3+1}(j_{1,p})}
$$
$$
\quad\quad= 1+9+j_{1,p}\frac{J_{2}^{'}(j_{1,p})}{J_{2}(j_{1,p})}+j_{1,p}\frac{J_{4}^{'}(j_{1,p})}{J_{4}(j_{1,p})}.
$$

Furthermore, from (\ref{Jmp1}) and (\ref{Jmp3}), and for $m=1$, we have

$$
j_{1,p} \frac{J_{2}^{'}(j_{1,p})}{J_{2}(j_{1,p})}=-2
\quad\mbox{and}\quad
j_{1,p} \frac{J_{4}^{'}(j_{1,p})}{J_{4}(j_{1,p})}=-4+\frac{4j_{1,p}^{2}}{24-j_{1,p}^{2}}.
$$
Therefore,
$
\;C_{3,1}(j_{1,p})=4+\frac{4j_{1,p}^{2}}{24-j_{1,p}^{2}}
=\frac{96}{24-j_{1,p}^{2}}< 0$
for $j_{1,p}\geq \sqrt{24}=2\sqrt{6}\simeq 4.89$.
Finally we have that $C_{3,1}(j_{1,p})< 0$, $\forall p\geq 2$.
\\
$\bullet$ Proof of (\ref{C32})
$$
C_{3,2}(j_{2,p})=1+3^{2}+j_{2,p}\frac{J_{3-2}^{'}(j_{2,p})}{J_{3-2}(j_{2,p})}+j_{2,p} \frac{J_{3+2}^{'}(j_{2,p})}{J_{3+2}(j_{2,p})}
$$
$$
\quad\quad= 1+9+j_{2,p}\frac{J_{1}^{'}(j_{2,p})}{J_{1}(j_{2,p})}+j_{2,p}\frac{J_{5}^{'}(j_{2,p})}{J_{5}(j_{2,p})}.
$$
Furthermore, by using (\ref{Jmm1}) and (\ref{Jmp3}) for $m=2$, we have
$$
j_{2,p} \frac{J_{1}^{'}(j_{2,p})}{J_{1}(j_{2,p})}=1
\quad\mbox{and}\quad
j_{2,p} \frac{J_{5}^{'}(j_{2,p})}{J_{5}(j_{2,p})}=-5+\frac{6j_{2,p}^{2}}{48-j_{2,p}^{2}},
$$
so
$$
C_{3,2}(j_{2,p})=10+1-5+\frac{6j_{2,p}^{2}}{48-j_{2,p}^{2}}=6+\frac{6j_{2,p}^{2}}{48-j_{2,p}^{2}}
$$
$$
=\frac{288}{48-j_{2,p}^{2}}< 0\quad for \quad j_{2,p}\geq \sqrt{48}=4\sqrt{3}\simeq 6.92
$$
Finally we have $C_{3,2}(j_{2,p})< 0$, $\forall p\geq 2$.\\
$\bullet$ Proof of (\ref{C33}):\\
By using (\ref{eq.AAA1}) and (\ref{Jmp3})
\begin{eqnarray*}
  C_{3,3}(j_{3,p}) &=& 4+\frac{-4j_{3,p}^{2}}{8-j_{3,p}^{2}}+\frac{8j_{3,p}^{2}}{80-j_{3,p}^{2}} \\
                   &=& \frac{2560-608j_{3,p}^{2}}{(8-j_{3,p}^{2})(80-j_{3,p}^{2})}<0\quad \forall p\geq 2,
\end{eqnarray*}
because $\sqrt{\frac{2560}{608}}\leq \sqrt{8} \leq \sqrt{80} \leq j_{3,p}\; \forall p\geq 2$.

$\bullet$ Proof (\ref{C34}):\\
We have $j_{4,p}\geq 2\sqrt{(4+1)(4+2)}(\simeq 10.94)$ for $p\geq 2$. By using (\ref{C3minf0}),
$$
C_{3,4}(j_{4,p})\leq 0\quad\forall p\geq 2.
$$
$\bullet$ Proof (\ref{C35}):\\
We have $j_{5,p}\geq 2\sqrt{(5+1)(5+2)}(\simeq 12.96)$ for $p\geq 3$. By using (\ref{C3minf0}),
$$
C_{3,5}(j_{5,p})\leq 0 \quad \forall p\geq 3.
$$
$\bullet$ Proof (\ref{C36}):\\
We have $j_{6,p}\geq 2\sqrt{(6+1)(6+2)}(\simeq 15.87)$ for $p\geq 3$. By using (\ref{C3minf0}),
$$
C_{3,6}(j_{6,p})\leq 0\quad \forall p\geq 3.
$$
$\bullet$ Proof(\ref{C37}):\\
We have $j_{7,p}\geq 2\sqrt{(7+1)(7+2)}(\simeq 16.97)$ for $p\geq 3$. By using (\ref{C3minf0}),
$$
C_{3,7}(j_{7,p})\leq 0 \quad \forall p\geq 3.
$$
$\bullet$ Proof(\ref{C38}):\\
We have $j_{8,p}\geq 2\sqrt{(8+1)(8+2)}(\simeq 18.97)$ for $p\geq 3$. By using (\ref{C3minf0}),
$$
C_{3,8}(j_{8,p})\leq 0 \quad \forall p\geq 3.
$$
$\bullet$ The first inequality of (\ref{C38andC58}):\\
We have $(\mbox{for}\;m=8,\;8.94\simeq\sqrt{8(8+2)}\leq j_{8,1}\leq 2\sqrt{(8-1)(8-2)}(\simeq 12.96)$. By(\ref{C3minf0}), we deduce
$$
C_{3,8}(j_{8,1})\leq 0.
$$
$\bullet$ Proof (\ref{App41}):
$$
C_{5,3}(j_{3,p})=1+5^{2}+j_{3,p}\frac{J_{5-3}^{'}(j_{3,p})}{J_{5-3}(j_{3,p})}+j_{3,p} \frac{J_{5+3}^{'}(j_{3,p})}{J_{5+3}(j_{3,p})}
$$
$$
\quad\quad= 1+25+j_{3,p}\frac{J_{2}^{'}(j_{3,p})}{J_{2}(j_{3,p})}+j_{3,p}\frac{J_{8}^{'}(j_{3,p})}{J_{8}(j_{3,p})}.
$$
From (\ref{Jmm1}) and for $m=3$, we have that $j_{3,p} \frac{J_{2}^{'}(j_{3,p})}{J_{2}(j_{3,p})}=2$. By (\ref{E}) for $m=3$, we have $
j_{3,p} \frac{J_{8}^{'}(j_{3,p})}{J_{8}(j_{3,p})}=-8+\frac{j_{3,p}^{2}(960-20j_{3,p}^{2})}{13440-360j_{3,p}^{2}+j_{3,p}^{4}}
$. Therefore,
$$
C_{5,3}(j_{3,p})=20+\frac{j_{3,p}^{2}(960-20j_{3,p}^{2})}{13440-360j_{3,p}^{2}+j_{3,p}^{4}}=\frac{268800-6240j_{3,p}^{2}}{13440-360j_{3,p}^{2}+j_{3,p}^{4}}
$$
$$
\quad=\frac{268800-6240j_{3,p}^{2}}{(j_{3,p}^{2}-(180-\sqrt{18960}))(j_{3,p}^{2}-(180+\sqrt{18960}))}.
$$
Since $j_{3,5}\simeq 19.4094\geq \sqrt{(180+\sqrt{18960})}\simeq 17.82\geq \sqrt{\frac{268800}{6240}}\simeq 6.56$, then $268800-6240j_{3,p}^{2}<0$ and $(j_{3,p}^{2}-(180-\sqrt{18960}))(j_{3,p}^{2}-(180+\sqrt{18960}))>0$\;$\forall p\geq 5$.\\

Then, $$C_{5,3}(j_{3,p})<0\;\mbox{for}\;\forall p\geq 5.$$

$\bullet$ Proof (\ref{C56})
$$
C_{5,6}(j_{6,1})=1+5^{2}+j_{6,1}\frac{J_{1}^{'}(j_{6,1})}{J_{1}(j_{6,1})}+j_{6,1} \frac{J_{11}^{'}(j_{6,1})}{J_{11}(j_{6,1})}.
$$

By using (\ref{E}) and (\ref{D}) for $m=6$ and $p=1$, we have that
\begin{eqnarray*}
 C_{5,6}(j_{6,1})  &=& 16+\frac{j_{6,1}^{2}(4032-32j_{6,1}^{2})}{80640-864j_{6,1}^{2}+j_{6,1}^{4}}-\frac{j_{6,1}^{2}(480-16j_{6,1}^{2})}{1920-144j_{6,1}^{2}+j_{6,1}^{4}} \\
                   &=& \frac{j_{6,1}^{2}(4032-32j_{6,1}^{2})}{80640-864j_{6,1}^{2}+j_{6,1}^{4}}-20 +36 -\frac{j_{6,1}^{2}(480-16j_{6,1}^{2})}{1920-144j_{6,1}^{2}+j_{6,1}^{4}}\\
                   &=& \frac{-52(j_{6,1}^{2}-(\frac{2664}{13}-\frac{24}{13}\sqrt{3221}))(j_{6,1}^{2}-(\frac{2664}{13}+\frac{24}{13}\sqrt{3221}))}{(j_{6,1}^{2}-(432+48\sqrt{46}))(j_{6,1}^{2}-(432-48\sqrt{46}))} \\
                   &+& \quad \frac{52(j_{6,1}^{2}-(\frac{708}{13}+\frac{12}{13}\sqrt{1921}))(j_{6,1}^{2}-(\frac{708}{13}-\frac{12}{13}\sqrt{1921}))}{(j_{6,1}^{2}-(72+8\sqrt{51}))(j_{6,1}^{2}-(72-8\sqrt{51}))}\leq 0,
\end{eqnarray*}
as $\sqrt{\frac{708}{13}-\frac{12}{13}\sqrt{1921}}\leq \sqrt{72-8\sqrt{51}}\leq \sqrt{\frac{708}{13}+\frac{12}{13}\sqrt{1921}}\leq j_{6,1}\leq \sqrt{\frac{2664}{13}-\frac{24}{13}\sqrt{3221}}\leq \sqrt{432-48\sqrt{46}}\leq \sqrt{72+8\sqrt{51}}\leq \sqrt{\frac{2664}{13}+\frac{24}{13}\sqrt{3221}}\leq \sqrt{432-48\sqrt{46}}$.\\

$\bullet$ Proof  for the second inequality of (\ref{C38andC58}):
$$
C_{5,8}(j_{8,2})=1+5^{2}+j_{8,2}\frac{J_{3}^{'}(j_{8,2})}{J_{3}(j_{8,2})}+j_{8,2} \frac{J_{13}^{'}(j_{8,2})}{J_{13}(j_{8,2})}
$$
By using (\ref{E}) and (\ref{D}) for $m=8$ and $p=2$, we have that
\begin{eqnarray*}
 C_{5,8}(j_{8,2})  &=& 16+\frac{j_{8,2}^{2}(7920-40j_{8,2}^{2})}{190080-1320j_{8,2}^{2}+j_{8,2}^{4}}-\frac{j_{8,2}^{2}(1680-24j_{8,2}^{2})}{13440-360j_{8,2}^{2}+j_{8,2}^{4}} \\
                   &=& \frac{j_{8,2}^{2}(7920-40j_{8,2}^{2})}{190080-1320j_{8,2}^{2}+j_{8,2}^{4}}-11+27-\frac{j_{8,2}^{2}(1680-24j_{8,2}^{2})}{13440-360j_{8,2}^{2}+j_{8,2}^{4}} \\
                   &=& \frac{-51(j_{8,2}^{2}-(220+\frac{44}{17}\sqrt{1105}))(j_{8,2}^{2}-(220-\frac{44}{17}\sqrt{1105}))}{(j_{8,2}^{2}-(660+12\sqrt{1705}))(j_{8,2}^{2}-(660-12\sqrt{1705}))} \\
                   &+& \quad \frac{54(j_{8,2}^{2}-(\frac{950}{9}+\frac{2}{9}\sqrt{89545}))(j_{8,2}^{2}-(\frac{950}{9}-\frac{2}{9}\sqrt{89545}))}{(j_{8,2}^{2}-(180+4\sqrt{1185}))(j_{8,2}^{2}-(180-4\sqrt{1185}))}\leq 0,
\end{eqnarray*}

because \\

$\sqrt{\frac{950}{9}-\frac{2}{9}\sqrt{89545}}\leq \sqrt{180-4\sqrt{1185}}\leq \sqrt{220-\frac{44}{17}\sqrt{1105}} \leq \sqrt{660-12\sqrt{1705}}\leq \sqrt{\frac{950}{9}+\frac{2}{9}\sqrt{89545}}\leq j_{8,2}\leq\sqrt{220+\frac{44}{17}\sqrt{1105}}\leq \sqrt{180+4\sqrt{1185}}\leq \sqrt{660+12\sqrt{1705}}$.

\section{Roots of the Bessel function}

\begin{table}[!h]
\small
\centering
\label{Table1}
\begin{tabular}{|l|l|l|l|l|l|l|l|l|l|}
\hline
$n\setminus p$ & 1     & 2     & 3     & 4     & 5     & 6     & 7     & 8     & 9     \\ \hline
0   & 2.4048  & 5.5201  & 8.6537  & 11.7915 & 14.9309 & 18.0711 & 21.2116 & 24.3525 & 27.4935 \\ \hline
1   & 3.8317  & 7.0156  & 10.1735 & 13.3237 & 16.4706 & 19.6159 & 22.7601 & 25.9037 & 29.0468 \\ \hline
2   & 5.1356  & 8.4172  & 11.6198 & 14.7960 & 21.1170 & 27.4206 & 30.5692 & 33.7165 & 40.0084 \\ \hline
3   & 6.3802  & 9.7610  & 13.0152 & 16.2235 & 19.4094 & 22.5827 & 25.7482 & 28.9084 & 32.0649 \\ \hline
4   & 7.5883  & 11.0647 & 14.3725 & 17.6160 & 20.8269 & 24.0190 & 27.1991 & 30.3710 & 33.5371 \\ \hline
5   & 8.7715  & 12.3386 & 15.7002 & 18.9801 & 22.2178 & 25.4303 & 28.6266 & 31.8117 & 34.9888 \\ \hline
6   & 9.9361  & 13.5893 & 17.0038 & 20.3208 & 23.5861 & 26.8202 & 30.0337 & 33.2330 & 36.4220 \\ \hline
7   & 11.0864 & 14.8213 & 18.2876 & 21.6415 & 24.9349 & 28.1912 & 31.4228 & 34.6371 & 37.8387 \\ \hline
8   & 12.2251 & 16.0378 & 19.5545 & 22.9452 & 26.2668 & 29.5457 & 32.7958 & 36.0256 & 39.2404 \\ \hline
\end{tabular}
\caption{Roots $\displaystyle j_{n,p}$ of the Bessel function}
\end{table}

\begin{table}[!h]
\small
\centering
\label{Table2}
\begin{tabular}{|l|l|l|l|l|l|l|l|l|}
\hline

$n\setminus p$	&1	      &2	      &3	      &4	       &5	      &6	       &7      	&8 \\ \hline
 0      & 0         & 3.8317    & 7.0156    & 10.1735   & 13.3237   & 16.4706   & 19.6159   & 22.7601 \\ \hline			
 1     	& 1.8411  	& 5.3314  	& 8.5363 	& 11.7060  	& 14.8635  	& 18.0155  	& 21.1643 	& 24.3113 \\ \hline
 2     	& 3.0542  	& 6.7061  	& 9.9694	& 13.1703 	& 16.3475 	& 19.5129 	& 22.6715 	& 25.8260 \\ \hline
 3     	& 4.2011 	& 8.0152	& 11.3459	& 14.5858 	& 17.7887 	& 20.9724 	& 24.1448 	& 27.3100 \\ \hline
 4     	& 5.3175 	& 9.2823 	& 12.6819 	& 15.9641 	& 19.1960 	& 22.4010 	& 21.6415 	& 28.7678 \\ \hline
 5     	& 6.4156 	& 10.5198 	& 13.9871	& 17.3128 	& 20.5755 	& 23.8035 	& 25.5897 	& 30.2028 \\ \hline
 6     	& 7.5012 	& 11.7349 	& 15.2681 	& 18.6374 	& 21.9317 	& 25.1839 	& 27.0103 	& 31.6178 \\ \hline
 7     	& 8.5778 	& 12.9323 	& 16.5293 	& 19.9418 	& 23.2680 	& 26.5450 	& 29.7907 	& 33.0151 \\ \hline
 8     	& 9.6474 	& 14.1155	& 17.7740 	& 21.2290 	& 24.5871 	& 27.8892 	& 31.1553 	& 34.3966 \\ \hline
 9     	& 10.7114	& 15.2867 	& 19.0045 	& 22.5013 	& 25.8912 	& 29.2185 	& 32.5052 	& 35.7637 \\ \hline
 10     & 11.7709   & 16.4479   & 20.2230   & 23.7607   & 27.1820   & 30.5345   & 33.8420   & 37.1180 \\ \hline					
 11     & 12.8265   & 17.0603   & 21.4309   & 25.0085   & 28.4609   & 31.8384   & 35.1667   & 38.4604 \\ \hline					
 12     & 13.8788   & 18.7451   & 22.6293   & 26.2460   & 29.7290   & 33.1314   & 36.4805   & 39.7919 \\ \hline					
 13     & 14.9284   & 19.8832   & 23.8194   & 27.4743   & 30.9874   & 34.4145   & 37.7844   & 41.1135\\ \hline				
\end{tabular}
\caption{Roots $j_{n,p}^{'}$ of the derivative of the Bessel function }
\end{table}

\begin{table}[!h]
\small
\centering
\label{Table3}
\begin{tabular}{|l||l||l||l||l|}
\hline

$\lambda_{1}=j_{0,1}^{2}$&$
\lambda_{2}=\lambda_{3}=j_{1,1}^{2}$&$
\lambda_{4}=\lambda_{5}=j_{2,1}^{2}$&$
\lambda_{6}=j_{0,2}^{2}$&
$\lambda_{7}=\lambda_{8}=j_{3,1}^{2}$\\ \hline \hline $
\lambda_{9}=\lambda_{10}=j_{1,2}^{2}$&$
\lambda_{11}=\lambda_{12}=j_{4,1}^{2}$&$
\lambda_{13}=\lambda_{14}=j_{2,2}^{2}$&
$\lambda_{15}=j_{0,3}^{2}$&$
\lambda_{16}=\lambda_{17}=j_{5,1}^{2}$\\ \hline \hline $
\lambda_{18}=\lambda_{19}=j_{3,2}^{2}$&$
\lambda_{20}=\lambda_{21}=j_{6,1}^{2}$&$
\lambda_{22}=\lambda_{23}=j_{1,3}^{2}$&$
\lambda_{24}=\lambda_{25}=j_{4,2}^{2}$&$
\lambda_{26}=\lambda_{27}=j_{7,1}^{2}$\\ \hline \hline$
\lambda_{28}=\lambda_{29}=j_{2,3}^{2}$&
$\lambda_{30}=j_{0,4}^{2}$&$
\lambda_{31}=\lambda_{32}=j_{8,1}^{2}$&$
\lambda_{33}=\lambda_{34}=j_{5,2}^{2}$&$
\lambda_{35}=\lambda_{36}=j_{3,3}^{2}$ \\ \hline \hline$
\lambda_{37}=\lambda_{38}=j_{1,4}^{2}$&$
\lambda_{39}=\lambda_{40}=j_{9,1}^{2}$&$
\lambda_{41}=\lambda_{42}=j_{6,2}^{2}$&$
\lambda_{43}=\lambda_{44}=j_{4,3}^{2}$ &
$\lambda_{45}=\lambda_{46}=j_{10,1}^{2}$\\ \hline \hline $
\lambda_{47}=\lambda_{48}=j_{2,4}^{2}$&$
\lambda_{49}=\lambda_{50}=j_{7,2}^{2}$&$
\lambda_{51}=j_{0,5}^{2}$&$
\lambda_{52}=\lambda_{53}=j_{11,1}^{2}$&$
\lambda_{54}=\lambda_{55}=j_{5,3}^{2}$\\ \hline \hline$
\lambda_{56}=\lambda_{57}=j_{8,2}^{2}$&$
\lambda_{58}=\lambda_{59}=j_{3,4}^{2}$&
$\lambda_{60}=\lambda_{61}=j_{1,5}^{2}$&$
\lambda_{62}=\lambda_{63}=j_{1,5}^{2}$&$
\lambda_{64}=\lambda_{65}=j_{12,1}^{2}$\\ \hline \hline$
\lambda_{66}=\lambda_{67}=j_{6,3}^{2}$& $
\lambda_{68}=\lambda_{69}=j_{9,2}^{2}$&$
\lambda_{70}=\lambda_{71}=j_{4,4}^{2}$&$
\lambda_{72}=\lambda_{73}=j_{13,1}^{2}$&$
\lambda_{74}=\lambda_{75}=j_{2,5}^{2}$\\ \hline \hline
$\lambda_{76}=j_{0,6}^{2}$&$
\lambda_{77}=\lambda_{78}=j_{7,3}^{2}$&$
\lambda_{79}=\lambda_{80}=j_{10,2}^{2}$&$
\lambda_{81}=\lambda_{82}=j_{14,1}^{2}$&$
\lambda_{83}=\lambda_{84}=j_{5,4}^{2}$\\ \hline \hline$
\lambda_{85}=\lambda_{86}=j_{3,5}^{2}$&$
\lambda_{87}=\lambda_{88}=j_{8,3}^{2}$&$
\lambda_{89}=\lambda_{90}=j_{1,6}^{2}$&
$\lambda_{91}=\lambda_{92}=j_{11,2}^{2}$&$
\lambda_{93}=\lambda_{94}=j_{15,1}^{2}$\\ \hline \hline$
\lambda_{95}=\lambda_{96}=j_{6,4}^{2}$&$
\lambda_{97}=\lambda_{98}=j_{12,2}^{2}$&$
\lambda_{99}=\lambda_{100}=j_{9,3}^{2}$&$
\lambda_{101}=\lambda_{102}=j_{4,5}^{2}$&$
\lambda_{103}=\lambda_{104}=j_{16,1}^{2}$\\ \hline
\end{tabular}

\caption{links between the first 103 eigenvalues of the Dirichlet Laplacian in a disk of radius $1$ and zeros of Bessel functions $j_{m,p}$}
\end{table}

\newpage
\section*{Acknowledgments}

The authors would like to thank Antoine Henrot for having proposed the subject of this paper in the Master thesis when the first author was in Institute Elie Cartan of Nancy. The authors thank Gisella Croce for some useful discussions.


\begin{thebibliography}{}
\bibitem{ray1896}
Rayleigh, John William Strutt Baron, \textit{The theory of sound(2)} (Macmillan, 1896).

\bibitem{faber1923}
Faber, Georg., Beweis, dass unter allen homogenen Membranen von gleicher Fl{\"a}che und gleicher Spannung die kreisf{\"o}rmige den tiefsten Grundton gibt, Verlagd. Bayer. Akad. d. Wiss.,174-204,(1923).

\bibitem{krahn1926}
Krahn, Edgar. \textit{{\"U}ber Minimaleigenschaften der Kugel in drei und mehr Dimensionen} (Mattiesen, 1926).

\bibitem{polya1955}
P{\'o}lya, G.,On the characteristic frequencies of a symmetric membrane, J.Mathematische Zeitschrift \textbf{63(1)} Springer,331-337,(1955).

\bibitem{hong1954}
Hong, Imsik.,On an inequality concerning the eigenvalue problem of membrane. Kodai Mathematical Seminar Reports.\textbf{ 6(4)}, 113-114 (1954).

\bibitem{wolf1994}
Wolf, Sven Andreas and Keller, Joseph B., Range of the first two eigenvalues of the Laplacian Proc. R. Soc. London A \textbf{447} ,397?412,(1994).

\bibitem{berger2015}
Berger Amandine.,The eigenvalues of the Laplacian with Dirichlet boundary condition in $\mathbb{R}^{2}$ are almost never minimized by disks Ann Glob Anal Geom. \textbf{47} Springer,285-304,(2015).

\bibitem{henrot2006}
Henrot Antoine, \textit{Extremum problems for eigenvalues of elliptic operators} (Springer Science \& Business Media, 2006)

\bibitem{henrot2005}
Henrot, A and Pierre, M., \textit{Variation et optimization de forme: un analyse g{\'e}om{\'e}tric. Mathematics and Applications 48} (Springer , 2005).

\bibitem{rolf2013}
Schneider, Rolf: \textit{Convex bodies: the Brunn-Minkowski theory, volume 44 of Encyclopedia of Mathematics and Applications} (Cambridge University Press, Cambridge ,1993).

\bibitem{convex}
Chakerian, GD and Groemer, H.,Convex bodies of constant width. Convexity and its applications.Springer,49-96 (1983).

\bibitem{gro}
Groemer,H Perturbations and Approximations of Support Functions of Convex Bodies.Contributions to Algebra and Geometry. \textbf{34(2)} ,163-171,(1993).

\bibitem{sheph}
Shephard, GC A uniqueness theorem for the Steiner point of a convex region.J.London Mathematical Society \textbf{1(1)} ,439-444,(1968).

\bibitem{edgar}
Edgar, Gerald, \textit{Measure, topology, and fractal geometry} (Springer Science \& Business Media, 2007)

\bibitem{evans}
Lawrence, C EVANS and Gariepy, Ronald F., \textit{Measure theory and fine properties of functions} (CRC, Boca Raton, FL ,1992)

\bibitem{morvan}
Morvan, Jean-Marie., \textit{Generalized curvatures} (Springer , 2008).

\bibitem{kato2013}
Kato, Tosio: \textit{Perturbation theory for linear operators, vol. 132}  (Springer, Berlin ,1995).


\bibitem{micheletti1972}
Micheletti, Perturbazione dello spettro dell?operatore di Laplace, in relazione ad una variazione
del campo. Ann. Sc. Norm. Super. Pisa.\textbf{ 26(1)}, 151?169 (1972)

\bibitem{nagy1946}
Nagy, B{\'e}la de Sz,Perturbations des transformations autoadjointes dans l'espace de Hilbert.Comment. Math.
Helv. \textbf{19(1)} ,347-366,(1946).

\bibitem{land1999}
Landau, L.J., Ratios of Bessel functions and roots of $\alpha J \nu (x)+ xJ^{'}\nu (x)= 0$ J. Math. Anal. Appl \textbf{240} Elsevier,174-204,(1999).

\bibitem{wat1995}
Watson, George Neville, \textit{A treatise on the theory of Bessel functions} (Cambridge university press, 1995)

\end{thebibliography}

\end{document}